\documentclass[a4paper,11pt,reqno]{amsart}
\usepackage[latin1]{inputenc}
\usepackage[english]{babel}
\usepackage{amssymb,amsfonts, amsmath, color}
\usepackage[margin=2.7cm]{geometry}
\usepackage{xcolor}
\usepackage{graphicx, color, enumerate}
\usepackage[latin1]{inputenc}
\usepackage[active]{srcltx}
\usepackage{tikz}
\usepackage{pgf}
\usepackage{etex}
\usepackage{verbatim}
\usepackage{tikz-3dplot}
\usepackage{amsmath}
\usepackage{amsfonts}
\usepackage{amssymb}
\usepackage{amsthm}
\usepackage{algpseudocode}
\usepackage{float}
\usepackage{xcolor}
\usepackage{mathrsfs}
\usepackage{import}
\usepackage{geometry}
\usepackage{fancyhdr}
\usepackage{enumitem}
\usepackage{fp}

\newtheorem{theorem}{Theorem}[section]
\newtheorem{proposition}[theorem]{Proposition}
\newtheorem{corollary}[theorem]{Corollary}
\newtheorem{lemma}[theorem]{Lemma}
\newtheorem{remark}[theorem]{Remark}
\newtheorem{definition}[theorem]{Definition}
\newtheorem{example}[theorem]{Example}

\usepackage[colorlinks, citecolor=blue, linkcolor=red]{hyperref}

\newcommand{\bcl}{\begin{center}}
\newcommand{\ecl}{\end{center}}
\newcommand{\brl}{\begin{right}}
\newcommand{\erl}{\end{right}}
\newcommand{\ben}{\begin{enumerate}}
\newcommand{\een}{\end{enumerate}}
\newcommand{\overliner}{\begin{array}}
\newcommand{\earr}{\end{array}}
\newcommand{\btab}{\begin{tabular}}
\newcommand{\etab}{\end{tabular}}
\newcommand{\bdoc}{\begin{document}}
\newcommand{\edoc}{\end{document}}
\newcommand{\beqy}{\begin{eqnarray}}
\newcommand{\eeqy}{\end{eqnarray}}

\newcommand{\beqi}{\begin{eqnarray*}}
\newcommand{\eeqi}{\end{eqnarray*}}
\newcommand{\bitem}{\begin{itemize}}

\newcommand{\eitem}{\end{itemize}}
\newcommand{\nln}{\newline}
\newcommand{\newt}{\newtheorem}

\newcommand{\pa}{\partial}
\newcommand{\re}{{I\!\!R}}
\newcommand{\Rn}{\R^N}
\newcommand{\xr}{x\in\R }
\newcommand{\x}{\times}
\newcommand{\dyle}{\displaystyle}
\newcommand{\ene}{{I\!\!N}}
\newcommand{\irn}{\int\limits_{\R^N}}
\newcommand{\io}{\int\limits_{\O}}
\newcommand{\meas}{{\rm meas\,}}
\newcommand{\dif}{\nabla_{xy}}
\newcommand{\sign}{{\rm sign}}
\newcommand{\map}{\longrightarrow }
\newcommand{\imp}{\Longrightarrow }
\renewcommand{\div}{\nabla\cdot }
\newcommand{\sen}{{\rm sen\,}}
\newcommand{\tg}{{\rm tg\,}}
\newcommand{\arcsen}{{\rm arcsen\,}}
\newcommand{\arctg}{{\rm arctg\,}}
\newcommand{\supp}{{\textsl supp\ }}
\newcommand{\ity}{\int_{-\iy}^{+\iy}}
\newcommand{\limit}{\lim\limits}
\newcommand{\limi}{\limit_{n\to\infty}}
\newcommand{\sumi}{\sum\limits_{n=1}^{\infty}}
\newcommand{\ulu}{\underline u}
\newcommand{\ulw}{\underline w}
\newcommand{\ulz}{\underline z}
\newcommand{\ulv}{\underline v}
\newcommand{\uls}{\underline s}
\newcommand{\olu}{\overline u}
\newcommand{\olv}{\overline v}
\newcommand{\ols}{\overline s}
\newcommand{\ob}{\overline\b}
\newcommand{\ovar}{\overline\var}
\newcommand{\wv}{\widetilde v}
\newcommand{\wu}{\widetilde u}
\newcommand{\ws}{\widetilde s}
\renewcommand{\a }{\alpha }
\renewcommand{\b }{\beta }
\newcommand{\g }{\gamma}
\newcommand{\G }{\Gamma }
\renewcommand{\d }{\delta }

\newcommand{\D }{\Delta }
\newcommand{\e }{\varepsilon }
\newcommand{\z }{\zeta }
\renewcommand{\l }{\lambda }
\renewcommand{\L }{\Lambda }
\newcommand{\m }{\mu }
\newcommand{\n }{\nabla }
\newcommand{\s }{\sigma }
\newcommand{\Sig }{\Sigma }
\renewcommand{\t }{\tau }
\newcommand{\var }{\varphi }
\renewcommand{\o }{\omega }
\renewcommand{\O }{\Omega }
\newcommand{\R}{{\mathbb{R}}}
\newcommand{\bC}{{\bf C}}
\newcommand{\bZ}{{\bf Z}}
\newcommand{\bN}{{\bf N}}
\newcommand{\bQ}{{\bf Q}}
\newcommand{\bK}{{\bf K}}
\newcommand{\bI}{{\bf I}}
\newcommand{\bv}{{\bf v}}
\newcommand{\bV}{{\bf V}}
\DeclareMathOperator{\suppo}{supp} \DeclareMathOperator{\di}{div}

\newenvironment{Proof}{\Rmovelastskip\vskip12pt
plus 1pt \noindent\em\rm}{\hfill {\qed \hskip .2cm}}

\begin{document}

\title[]{Semigroup Criteria for Blow-up and Global Existence of Semilinear Heat Equations on Metric Measure Spaces}

\author{Giulia Meglioli}

\author{Fabio Punzo}

\address{\hbox{\parbox{5.7in}{\medskip \noindent{Giulia Meglioli, \\Dipartimento di Matematica, \\Politecnico di Milano, \\Piazza Leonardo da Vinci 32, 20133, Milano, Italy \\ [3pt] \emph{E-mail address: }{\tt giulia.meglioli@polimi.it}}}}}

\address{\hbox{\parbox{5.7in}{\medskip \noindent{Fabio Punzo, \\Dipartimento di Matematica, \\Politecnico di Milano, \\Piazza Leonardo da Vinci 32, 20133, Milano, Italy \\ [3pt] \emph{E-mail address: }{\tt fabio.punzo@polimi.it}}}}}

\keywords{}

\subjclass[2010]{}

\maketitle

\begin{abstract}
We study finite-time blow-up and global existence for semilinear heat equations with time-dependent reaction coefficients on metric measure spaces. For convex nonlinearities, we establish a blow-up criterion that retains the dependence on the initial datum through its linear evolution and does not require stochastic completeness. Under additional conservativity and heat kernel assumptions, we also prove blow-up at the critical Fujita exponent associated with polynomial volume growth. Our main global existence result is based on a heat-kernel-weighted functional space in which the mild solution map becomes a contraction. This yields a unified construction covering both polynomial and exponential heat kernel decay, general locally Lipschitz nonlinearities, and time-dependent reaction coefficients, without convexity or monotonicity assumptions. In particular, the global existence criterion only uses the behaviour of the nonlinearity on the range explored by the solution. Applications include Riemannian manifolds, metric graphs, and, within a separate discrete framework, weighted graphs. The arguments, especially the functional construction underlying global existence, are new and lead to new results even for the classical underlying spaces considered above.
\end{abstract}

\section{Introduction}\label{sec0}
We investigate global existence and finite-time blow-up of nonnegative solutions to semilinear parabolic problems of the form
\begin{equation}\label{problema}
\begin{cases}
u_t=\Delta u+h(t)f(u) & \text{in }X\times(0,T),\\
u=u_0 & \text{in }X\times\{0\},
\end{cases}
\end{equation}
where $(X,d,\mu)$ is a metric measure space equipped with a regular symmetric Dirichlet form whose associated sub-Markovian semigroup admits a heat kernel. We write $\Delta=-L$, where $L$ is the nonnegative self-adjoint operator associated with the form, so that the heat semigroup is $P_t=e^{t\Delta}=e^{-tL}$. Here $T\in(0,\infty]$, the function $h\in C([0,\infty))$ is positive for $t>0$, and $f:[0,\infty)\to[0,\infty)$ is locally Lipschitz, with $f(0)=0$ and $f(v)>0$ for $v>0$. The initial datum $u_0$ is bounded, continuous, nonnegative, and nontrivial. Further assumptions on $f$ are specified separately for the blow-up and global existence results. The continuous framework includes Riemannian manifolds and metric graphs. Discrete weighted graphs are treated separately, using the same semigroup arguments whenever the required kernel assumptions hold.

The interplay between diffusion and reaction has been extensively studied in the Euclidean setting. For $h\equiv1$ and $f(u)=u^p$, the classical Fujita phenomenon identifies the exponent $p_F=1+2/n$ as the threshold between finite-time blow-up for every nontrivial nonnegative initial datum and global existence for sufficiently small initial data on $\mathbb R^n$ (see \cite{Fujita, H, KST, Sug}). We refer to \cite{BB, DL, Levine, QS} for surveys and further developments. On Riemannian manifolds, geometry can substantially change this picture; see, for instance, \cite{BPT, GrilloMeglioliPunzo2023, MaMoPu, Punzo, Pu22, Zhang}. In particular, on suitable manifolds with positive bottom of the spectrum, small-data global existence holds for every power $p>1$, and spectral quantities also govern the behaviour of more general nonlinearities (see \cite{Punzo}).

Related questions have received considerable attention on infinite graphs. Results on global existence and blow-up for semilinear heat equations can be found in \cite{GrilloMeglioliPunzoGraphs, LinWu1, LinWu2, PuZu1, PuZu2, Wu}, while nonexistence results obtained through geometric and space-time volume conditions are developed in \cite{MoPuSo}.

Metric measure spaces provide a natural framework for studying diffusion in both smooth and nonsmooth environments. The construction of heat flows, estimates for their kernels, and the analysis of associated parabolic equations have been investigated extensively; see, e.g., \cite{AGS2, AGS, AMS, CGrigL, CavMond, DePMS, DePS, GrigHK, GrigHuLau, GrigKum, GrigHH3, GrigHH, GrigHH2}. A central theme is the relationship between the geometry of the underlying space, functional inequalities, and the behaviour of the heat semigroup. For nonlinear equations, heat kernel estimates also provide a way to quantify the competition between diffusion and reaction without relying on local coordinates or a differential description of the underlying space. Blow-up and global existence for a model semilinear parabolic equation on metric measure spaces were studied in \cite{PT}, with applications to metric graphs and Riemannian manifolds. The present paper develops criteria for general locally Lipschitz nonlinearities and time-dependent reaction coefficients, under the respective assumptions needed for blow-up and global existence.

Our aim is not only to extend individual existence and nonexistence results to a broader class of spaces, but also to formulate the competition between diffusion and reaction directly in terms of the heat semigroup. Two complementary mechanisms emerge. On the blow-up side, the relevant quantity is the linear evolution $P_Tu_0$ of the entire initial datum at a fixed time. On the global existence side, the heat kernel itself provides a natural space-time profile in which the nonlinear mild solution map can be controlled. The latter leads to a single weighted functional framework that applies to both polynomial and exponential heat kernel decay and separates the geometric input from the local behaviour of the nonlinearity.

In \cite{LSZ}, the autonomous problem corresponding to $h\equiv1$ is studied in the abstract framework of $L^p$-valued mild solutions, with $1\le p<\infty$. The authors establish local existence and a criterion for nonexistence of global solutions for sub-Markovian semigroups, without requiring an integral kernel. Their work is concerned with blow-up, while they do not address global existence. Our nonexistence result allows a continuous time-dependent coefficient $h$ and is formulated for mild solutions that are bounded on compact time intervals, without requiring the initial datum to belong to an $L^p$ space with $p<\infty$. A detailed comparison in the autonomous case is given in Remark \ref{rem:comparison-LSZ}.

For time-dependent power reactions on infinite graphs with positive spectral bottom, in \cite{PuSa}, the authors prove small-data global existence by a contraction argument in a space controlled by a shifted heat kernel column. Our global existence criterion extends this construction by introducing an additional time-dependent weight and replacing the power estimates by local Lipschitz bounds for a general nonlinearity. More importantly, the resulting weighted heat kernel space is not tied to a positive spectral bottom: the same functional construction applies under either polynomial or exponential heat kernel decay, within the respective continuous and discrete semigroup settings.

Our blow-up result (see Theorem \ref{teo1}) gives a sufficient condition that retains the dependence on the entire initial datum through its linear evolution. Assume that $f$ is nondecreasing and convex and satisfies condition
\[
\int_1^\infty\frac{ds}{f(s)}<\infty.
\]
Set
\[
J(a):=\int_a^\infty\frac{ds}{f(s)}, \qquad a>0, \qquad H(t):=\int_0^t h(s)\,ds.
\]
Under these assumptions, $J$ is a strictly decreasing bijection from $(0,\infty)$ onto $(0,\infty)$. Theorem \ref{teo1} shows that, if
\[
\|P_Tu_0\|_{L^\infty(X,\mu)} \ge J^{-1}(H(T))
\]
for some $T>0$, then the maximal solution blows up no later than $T$. The quantity $P_Tu_0$ describes the initial datum after diffusion alone has acted for time $T$, whereas $J^{-1}(H(T))$ is the threshold determined by the reaction and its accumulated temporal strength. The condition is imposed at a single time and requires no asymptotic heat kernel estimate. In particular, it detects blow-up for sufficiently large multiples of a fixed initial profile, including situations in which a criterion based only on long-time decay gives no conclusion.

The proof studies the backward semigroup transform $P_{T-t}u(t)$ and derives a scalar differential inequality by convexity. This approach is related to the arguments used in \cite{GrilloMeglioliPunzo2023, GrilloMeglioliPunzoGraphs}, but the present formulation improves their scope in several directions. Stochastic completeness, imposed in those blow-up arguments, is unnecessary here: sub-Markovianity and $f(0)=0$ suffice for the required Jensen inequality. Moreover, in the common setting, the earlier conditions that ensure blow-up independently of the size of the nontrivial datum yield sufficient instances of the present criterion, whereas our condition also records its amplitude and spatial distribution explicitly. The proof includes equality at the threshold, even when the essential supremum of $P_Tu_0$ is not attained. It does so by establishing a uniform positive gap in the necessary condition for a solution bounded up to time $T$.

For suitable conservative spaces, the necessary condition can also be strengthened to treat a critical power (see Theorem \ref{thm:critical-volume}). More precisely, when $h\equiv1$, the heat kernel satisfies $(HK_\beta)$, and the volume of large balls satisfies $V(x_0,R)\le CR^D$, we prove blow-up at
\[
p=1+\frac{\beta}{D}
\]
for $f(u)=u^p$. The argument applies the necessary condition to solutions restarted at arbitrary times. Global existence would force a uniform bound on their total mass, while the lower heat kernel estimate forces the reaction to produce at least logarithmic growth of that mass. This yields a contradiction. The result includes the critical Fujita exponent on $\mathbb R^n$, its counterpart on manifolds with nonnegative Ricci curvature, and the exponent determined by the large-scale dimension of flat cylinders. This treatment of the polynomial critical regime goes beyond the noncritical heat kernel applications in \cite{LSZ} and complements the positive-spectral-bottom setting of the graph results discussed above.

The main contribution on the global existence side is a functional construction based on a weighted heat kernel profile. Given a pole $y_0$, a positive shift $\gamma$, and a nondecreasing continuous weight $\omega\ge1$, we consider nonnegative functions satisfying
\[
u(x,t)\le M\omega(t)K_{t+\gamma}(x,y_0).
\]
The distance between two functions is measured by dividing their difference by the same space-time profile and taking an essential supremum. Thus the functional space is adapted directly to the linear diffusion: the shifted heat kernel incorporates its spatial behaviour, while the factor $\omega$ compensates for the temporal effect of the reaction. The semigroup identity then reduces both invariance of this class and contraction of the mild solution map to scalar integral estimates.

A significant feature of this construction is that it is not tied to a particular large-time regime of the heat kernel. Taking $\omega\equiv1$ allows the same argument to cover $\mathbb R^n$ and manifolds with polynomial heat kernel decay, while suitable nonconstant weights accommodate nonlinearities with a linear part when exponential decay is available. Polynomial and spectral regimes are therefore treated by one contraction mechanism rather than by separate existence arguments. Moreover, since the nonlinear estimates are expressed through local Lipschitz constants on the range allowed by the weighted profile, the construction does not require convexity or monotonicity of $f$ and may depend only on its behaviour near the origin. In this sense, the weighted space provides a direct link between the decay properties of the linear semigroup and the amount of nonlinear growth that can be admitted. The arguments developed here, and in particular the functional construction underlying the global existence theory, are new. They lead to results that appear to be new even for the classical underlying spaces considered below, including Riemannian manifolds and graph settings.

Theorem \ref{teo2} makes this principle precise. It gives an explicit smallness condition on the initial datum through domination by a heat kernel column and yields a global bound by the corresponding weighted profile. Neither convexity nor monotonicity of $f$ is required. When the construction keeps the solution in a fixed interval $[0,\delta_0]$, the criterion depends only on the behaviour of $f$ in that interval, with no restriction on its growth beyond $\delta_0$. Uniqueness holds in the full class of nonnegative mild solutions bounded on compact time intervals, not merely within the weighted class used for the construction.

For the power nonlinearity $f(u)=u^p$, a basic consequence is the sufficient condition
\[
\int_0^\infty h(t)\overline{\mathcal K}(t+\gamma)^{p-1}\,dt<\infty,
\]
where $\overline{\mathcal K}(t)$ denotes the essential supremum of the heat kernel over the spatial variables. This condition directly incorporates both the temporal coefficient and the rate of heat kernel decay. Polynomial decay yields Fujita-type ranges, while a positive spectral bottom, together with a uniform heat kernel bound at a positive time, yields small-data global existence for every $p>1$ when $h$ is bounded or grows at most polynomially. For exponentially increasing $h$, the same condition expresses the balance between the growth rate of the reaction coefficient and the spectral decay. These applications illustrate how the blow-up and global existence criteria retain their form across substantially different geometric settings.

\medskip

The paper is organized as follows. In Section \ref{mf}, we introduce the metric measure framework, the Laplace operator, and the associated heat kernel. We also recall heat kernel estimates in several settings and distinguish metric graphs from discrete weighted graphs. In Section \ref{statements}, we state our main results on finite-time blow-up and global existence. Sections \ref{blowup-proof} and \ref{globalex-proof} contain their proofs. In Section \ref{model}, we specialize the general criteria to power nonlinearities, establish the critical blow-up result, and discuss the role of the time coefficient and of the underlying geometry. Section \ref{model-beyond-powers} treats nonlinearities beyond powers, including sums of powers and exponential nonlinearities. Appendix \ref{app:kernel-background} collects additional background on heat kernel estimates.
\section{Mathematical framework}\label{mf}

\setcounter{equation}{0}

\subsection{Metric measure spaces and Dirichlet forms}\label{dirichletforms}

We denote by $\mathcal B(X)$ the Borel $\sigma$-algebra of a metric space $X$.

\begin{definition}\label{MMS}
Except in the discussion of discrete weighted graphs, throughout the paper
we work with metric measure spaces $(X,d,\mu)$ satisfying the following
assumptions:
\begin{itemize}
\item $(X,d)$ is connected, locally compact, separable, and complete;
\item $d$ is a geodesic metric;
\item $\mu$ is a locally finite Radon measure on $\mathcal B(X)$ with full support:
\[
\operatorname{supp}\mu=X.
\]
\end{itemize}
\end{definition}

For $x\in X$ and $r>0$, we write
\[
B_r(x):=\{y\in X:d(x,y)<r\}.
\]

Here, the geodesic assumption means that any two points can be joined by a curve
whose length equals their distance. Under the assumptions in Definition \ref{MMS},
closed bounded sets are compact. Consequently,
\[
0<\mu(B_r(x))<\infty
\qquad \text{for every }x\in X\text{ and }r>0.
\]

Let $(\mathcal E,\mathcal F)$ be a symmetric Dirichlet form on the real Hilbert space
$L^2(X,\mu)$. Thus, $\mathcal F$ is a dense linear subspace of $L^2(X,\mu)$ and
$\mathcal E$ is a symmetric, nonnegative bilinear form on $\mathcal F$ which is
closed and Markovian. The latter property means that
\[
\varphi(u)\in\mathcal F,
\qquad
\mathcal E(\varphi(u),\varphi(u))\le \mathcal E(u,u)
\]
for every $u\in\mathcal F$ and every normal contraction $\varphi:\mathbb R\to\mathbb R$.
Here, a normal contraction satisfies
\[
\varphi(0)=0,
\qquad
|\varphi(a)-\varphi(b)|\le |a-b|
\quad \text{for all }a,b\in\mathbb R.
\]
We refer to \cite[Section 1.1]{FOT} for further details.

We will always assume that $(\mathcal E,\mathcal F)$ is regular, in the following sense.

\begin{definition}\label{regularity}
A Dirichlet form $(\mathcal E,\mathcal F)$ is called \emph{regular} if there exists a set
\[
\mathcal C\subseteq\mathcal F\cap C_c(X)
\]
which is dense in $\mathcal F$ with respect to the $\mathcal E_1$-norm and dense in
$C_c(X)$ with respect to the supremum norm, where
\[
\mathcal E_1(u,v):=\mathcal E(u,v)+\langle u,v\rangle_{L^2(X,\mu)},
\qquad
\|u\|_{\mathcal E_1}^2=\mathcal E_1(u,u).
\]
Such a set $\mathcal C$ is called a core of the regular form.
\end{definition}

Equivalently, the form is regular if $\mathcal F\cap C_c(X)$ itself has these density
properties. To avoid ambiguity, a subset of $\mathcal F$ which is dense only with respect
to the $\mathcal E_1$-norm will be called a \emph{form core}.

The form may be local or non-local.

For functions in $L^2(X,\mu)$, supports are understood as essential
supports: $\operatorname{supp}u$ is the complement of the largest open
set on which $u=0$ $\mu$-almost everywhere.
Likewise, a function is said to be constant on an open set if it is
constant $\mu$-almost everywhere on that set.

\begin{definition}\label{locality}
A Dirichlet form $(\mathcal E,\mathcal F)$ is called:
\begin{itemize}
\item \emph{local} if
\[
\mathcal E(u,v)=0
\]
whenever $u,v\in\mathcal F$ have compact disjoint supports;
\item \emph{strongly local} if
\[
\mathcal E(u,v)=0
\]
whenever $u,v\in\mathcal F$, $v$ has compact support, and $u$ is constant in an open
neighborhood of $\operatorname{supp}v$.
\end{itemize}
\end{definition}

\begin{proposition}\label{prop22}
Let $(X,d,\mu)$ be as in Definition \ref{MMS}, and let $(\mathcal E,\mathcal F)$ be a regular
symmetric Dirichlet form on $L^2(X,\mu)$. There exists a unique nonnegative self-adjoint
operator $L_{\mathcal E}$ with domain $D(L_{\mathcal E})$ such that:
\begin{enumerate}[label=(\roman*)]
\item $D(L_{\mathcal E})\subseteq\mathcal F$ and
\[
\mathcal E(u,v)=\int_X (L_{\mathcal E}u)v\,d\mu
\]
for all $u\in D(L_{\mathcal E})$ and $v\in\mathcal F$;

\item $D(L_{\mathcal E})$ is dense in $\mathcal F$ with respect to the $\mathcal E_1$-norm;

\item if $u\in\mathcal F$ and $w\in L^2(X,\mu)$ satisfy
\[
\mathcal E(u,v)=\int_X wv\,d\mu
\]
for every $v$ in a form core, then
\[
u\in D(L_{\mathcal E}),
\qquad L_{\mathcal E}u=w.
\]
\end{enumerate}
\end{proposition}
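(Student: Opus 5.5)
The plan is to build $L_{\mathcal E}$ from the closed form by the standard Friedrichs/Riesz construction, using only that $(\mathcal E,\mathcal F)$ is a densely defined, closed, nonnegative symmetric form on $L^2(X,\mu)$. Regularity and the Markov property play no role here; they only matter for the semigroup later. First, $(\mathcal F,\mathcal E_1)$ is a Hilbert space, because the form is closed.

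\textbf{Definition of the operator.} Set $D(L_{\mathcal E})$ to be the set of $u\in\mathcal F$ for which there exists $w\in L^2(X,\mu)$ with $\mathcal E(u,v)=\int_X wv\,d\mu$ for all $v\in\mathcal F$. Then put $L_{\mathcal E}u:=w$. Since $\mathcal F$ is dense in $L^2$, $w$ is unique, so $L_{\mathcal E}$ is well defined and linear, and (i) holds by construction.

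\textbf{Self-adjointness.}
\begin{itemize}
\item Symmetry and nonnegativity follow from the symmetry and nonnegativity of $\mathcal E$.
\item For surjectivity of $I+L_{\mathcal E}$, fix $g\in L^2$. The map $v\mapsto\int gv\,d\mu$ is a bounded functional on $(\mathcal F,\mathcal E_1)$, since $\|v\|_{L^2}\le\|v\|_{\mathcal E_1}$. Riesz gives $u\in\mathcal F$ with $\mathcal E_1(u,v)=\langle g,v\rangle$ for all $v\in\mathcal F$. Hence $u\in D(L_{\mathcal E})$ and $(I+L_{\mathcal E})u=g$.
\item Density of $D(L_{\mathcal E})$ in $L^2$: if $h\perp D(L_{\mathcal E})$, take $u$ with $(I+L_{\mathcal E})u=h$. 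Then $0=\langle h,u\rangle=\mathcal E_1(u,u)$, so $u=0$ and $h=0$.
\item A densely defined symmetric operator with $\mathrm{Ran}(I+L_{\mathcal E})=L^2$ is self-adjoint.
\end{itemize}

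\textbf{Density (ii).} Let $u\in\mathcal F$ be $\mathcal E_1$-orthogonal to $D(L_{\mathcal E})$. For every $g\in L^2$, with $v=(I+L_{\mathcal E})^{-1}g$, we get $0=\mathcal E_1(v,u)=\langle g,u\rangle$. Hence $u=0$.

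\textbf{Property (iii).} This is the only point needing care, because the identity is assumed only on a form core $\mathcal C_0$ rather than on all of $\mathcal F$.
\begin{itemize}
\item Both sides of $\mathcal E(u,v)=\int wv\,d\mu$ are continuous in $v$ for the $\mathcal E_1$-norm. The left side satisfies $|\mathcal E(u,v)|\le\mathcal E(u,u)^{1/2}\mathcal E(v,v)^{1/2}$ by Cauchy--Schwarz for the nonnegative form, and the right side is bounded by $\|w\|_{L^2}\|v\|_{\mathcal E_1}$.
\item By $\mathcal E_1$-density of $\mathcal C_0$, the identity extends to all $v\in\mathcal F$, so $u\in D(L_{\mathcal E})$ with $L_{\mathcal E}u=w$ by definition.
\end{itemize}

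\textbf{Uniqueness.} Suppose $A$ is another nonnegative self-adjoint operator satisfying (i). For $u\in D(A)$, (i) gives $\mathcal E(u,v)=\langle Au,v\rangle$ for all $v\in\mathcal F$. By the definition of $D(L_{\mathcal E})$, this means $u\in D(L_{\mathcal E})$ and $L_{\mathcal E}u=Au$, so $A\subseteq L_{\mathcal E}$. Self-adjoint operators are maximal symmetric, so a self-adjoint operator contained in another self-adjoint operator coincides with it. Hence $A=L_{\mathcal E}$.

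The main obstacle is the self-adjointness step, specifically producing surjectivity of $I+L_{\mathcal E}$ via Riesz; the remaining parts are routine once the Hilbert structure on $(\mathcal F,\mathcal E_1)$ is in place. Alternatively, the whole statement can be cited from \cite[Theorem 1.3.1 and Corollary 1.3.1]{FOT}.
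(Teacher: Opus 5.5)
Your proof is correct and is essentially the paper's argument. The paper obtains existence, self-adjointness, (i), (ii) and uniqueness by citing Kato's first representation theorem (and Fukushima--Oshima--Takeda, Corollary 1.3.1), whose standard proof is your Riesz construction on $(\mathcal F,\mathcal E_1)$, and it proves (iii) by the same $\mathcal E_1$-continuity extension from a form core that you give.
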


We refer to \cite[Corollary 1.3.1]{FOT} and
\cite[Theorem VI.2.1]{Kato} for these facts.
For part (iii), the identity extends from a form core to all of
$\mathcal F$, since both sides are continuous with respect to the
$\mathcal E_1$-norm.

\begin{definition}\label{def:laplacian}
The Laplacian associated with $\mathcal E$ is
\[
\Delta:=-L_{\mathcal E},
\qquad D(\Delta):=D(L_{\mathcal E}).
\]
In particular,
\[
\int_X(\Delta u)u\,d\mu=-\mathcal E(u,u)\le0
\qquad \text{for every }u\in D(\Delta).
\]
\end{definition}

\subsection{Semigroups and heat kernels}\label{heats}

We use the following definition of heat kernel; see, for instance,
\cite[Section 1]{GrigHuLau}.

\begin{definition}[Heat kernel]\label{def:heatkernel}
A nonnegative measurable function
\[
K:X\times X\times(0,\infty)\to[0,\infty),
\qquad K_t(x,y):=K(x,y,t),
\]
is called a \emph{heat kernel} if it satisfies the following properties:
\begin{enumerate}[label=(\alph*)]
\item for every $t>0$,
\[
\int_X K_t(x,y)\,d\mu(y)\le1
\qquad \text{for $\mu$-almost every }x\in X;
\]

\item for every $t>0$,
\[
K_t(x,y)=K_t(y,x)
\qquad \text{for $\mu\otimes\mu$-almost every }(x,y);
\]

\item for every $s,t>0$,
\begin{equation}\label{eq10}
\int_X K_t(x,y)K_s(y,z)\,d\mu(y)=K_{t+s}(x,z)
\end{equation}
for $\mu\otimes\mu$-almost every $(x,z)$;

\item for every $u\in L^2(X,\mu)$,
\[
\lim_{t\downarrow0}
\left\|\int_X K_t(\cdot,y)u(y)\,d\mu(y)-u\right\|_{L^2(X,\mu)}=0.
\]
\end{enumerate}
\end{definition}

If, in addition,
\begin{equation}\label{def:stochastic}
\int_X K_t(x,y)\,d\mu(y)=1
\end{equation}
for every $t>0$ and $\mu$-almost every $x\in X$, then the heat kernel is called
\emph{stochastically complete}, or \emph{conservative}.

Conservativity is a property of the chosen heat kernel, or equivalently
of its associated semigroup.
Once the Dirichlet form is fixed, we also say that the corresponding
Dirichlet space is \emph{stochastically complete}.
For brevity, we may say that $X$ is stochastically complete, always
with reference to this fixed form.

\begin{definition}[Heat semigroup]\label{heatsemigroup}
The semigroup associated with a heat kernel is defined by $P_0=\operatorname{id}$ and
\[
P_tu(x)=\int_X K_t(x,y)u(y)\,d\mu(y),
\qquad t>0.
\]
\end{definition}

The operators $\{P_t\}_{t\ge0}$ form a strongly continuous, self-adjoint contraction
semigroup on $L^2(X,\mu)$. They are positivity preserving and sub-Markovian:
\[
0\le u\le1
\quad\Longrightarrow\quad
0\le P_tu\le1.
\]
The term \emph{Markovian} is also commonly used for this property.
Conservativity means that $P_t1=1$.

The semigroup extends consistently to contraction semigroups on $L^p(X,\mu)$
for every $p\in[1,\infty]$. In particular,
\begin{equation}\label{eq01}
P_tu\text{ has the same meaning on }L^p(X,\mu)\cap L^q(X,\mu),
\qquad p,q\in[1,\infty].
\end{equation}
These extensions are strongly continuous for $1\le p<\infty$.
Strong continuity on $L^\infty(X,\mu)$ does not hold in general.

\begin{definition}[Nonnegative generator]\label{generator}
The nonnegative generator $\mathcal L$ of $\{P_t\}_{t\ge0}$ is defined by
\begin{equation}\label{eq03}
\mathcal Lu:=\lim_{t\downarrow0}\frac{u-P_tu}{t},
\end{equation}
where the limit is taken in $L^2(X,\mu)$. Its domain consists of the functions
$u\in L^2(X,\mu)$ for which this limit exists.
\end{definition}

The operator $\mathcal L$ is densely defined, nonnegative, and self-adjoint, and
\[
P_t=e^{-t\mathcal L}.
\]
Thus, the infinitesimal generator in the usual semigroup convention is $-\mathcal L$.

Conversely, the regular symmetric Dirichlet form $(\mathcal E,\mathcal F)$
determines the semigroup
\[
P_t=e^{-tL_{\mathcal E}}=e^{t\Delta},
\qquad t\ge0.
\]
This is a strongly continuous, self-adjoint, sub-Markovian contraction semigroup
on $L^2(X,\mu)$. Its existence does not, by itself, imply the existence of an
integral kernel with respect to $\mu$.

Whenever a heat kernel is used below, we assume that it represents this semigroup:
\[
P_tu(x)=\int_X K_t(x,y)u(y)\,d\mu(y).
\]
In this case, $\mathcal L=L_{\mathcal E}$.

From now on, we write
\[
L:=L_{\mathcal E}=\mathcal L,
\qquad
\Delta=-L,
\qquad
P_t=e^{-tL}=e^{t\Delta}.
\]

\begin{remark}
All identities and inequalities involving a merely measurable heat kernel are understood
almost everywhere in the spatial variables, for each fixed choice of the positive time
parameters. Pointwise formulations will be used when a suitable version of the kernel
is available, as in the Riemannian setting.
\end{remark}

\begin{remark}
The assumptions above do not imply that $P_t$ preserves $C(X)\cap L^\infty(X,\mu)$,
or that $P_tu\to u$ uniformly as $t\downarrow0$ for every bounded continuous function $u$.
Any such continuity property must be justified by additional assumptions.
\end{remark}

\begin{remark}
A finite-valued, nondecreasing convex function $f:[0,\infty)\to[0,\infty)$
is locally Lipschitz, including at the endpoint $0$.
However, these assumptions do not imply $f(0)=0$.

For a bounded nonnegative function $u$, convexity and the sub-Markovian property give
\[
f(P_tu)\le P_t(f(u))+(1-P_t1)f(0).
\]
Consequently,
\[
f(P_tu)\le P_t(f(u))
\]
if the semigroup is conservative or if $f(0)=0$.
\end{remark}

\subsection{Volume growth and heat kernel bounds}

The volume growth conditions and heat kernel estimates introduced below
are additional assumptions. Each of them is imposed only in the results
where it is explicitly invoked. 

For positive functions $a,b$, we write $a\asymp b$
when there exist constants $c,C>0$ such that
$cb\le a\le Cb$ throughout the specified range.
We denote the volume of a ball by
\begin{equation}\label{volumepalla}
V(x,r):=\mu(B_r(x)).
\end{equation}

\begin{definition}[Volume doubling condition]\label{def:VD}
The measure $\mu$ satisfies the \emph{volume doubling condition} if there exists
a constant $C_D\ge1$ such that
\begin{equation}\label{eq04}\tag{VD}
V(x,2r)\le C_DV(x,r)
\qquad \text{for every }x\in X,\ r>0.
\end{equation}
\end{definition}

\begin{remark}
Under Definition \ref{MMS}, the positivity and finiteness of $V(x,r)$ follow from
full support, the Radon property, and the relative compactness of balls.
Condition \eqref{eq04} additionally requires a doubling constant independent
of both $x$ and $r$. Local versions restrict the range of radii, while non-uniform
versions may allow the constants to depend on the center.
\end{remark}

\begin{remark}
Condition \eqref{eq04} is equivalent to the existence of constants $M\ge1$ and $s>0$
such that
\begin{equation}\label{eq05bis}
V(x,R)\le M\left(\frac{d(x,y)+R}{r}\right)^s V(y,r)
\end{equation}
for all $x,y\in X$ and $0<r\le R$. It is also equivalent to
\begin{equation}\label{eq05}
V(x,R)\le M\left(\frac{R}{r}\right)^s V(x,r)
\end{equation}
uniformly in $x\in X$ and $0<r\le R$.

Indeed, starting from \eqref{eq04}, and increasing $C_D$ if necessary so that $C_D>1$,
one may take
\[
s=\log_2C_D,
\qquad M=C_D.
\]
The estimate with different centers follows from the same-center
estimate and the inclusion
\[
B_R(x)\subseteq B_{R+d(x,y)}(y).
\]
Conversely, \eqref{eq05} implies \eqref{eq04} with doubling constant $M2^s$.
These constants need not coincide with the original doubling constant.
See, for instance, \cite{GrigHH}.
\end{remark}

\begin{remark}
On a smooth Riemannian manifold, local volume doubling holds for sufficiently small
radii around each point.

If the manifold is complete and its Ricci curvature is bounded from below, then,
for every fixed $R_0>0$, there exists a constant $C_D(R_0)$ such that
\[
V(x,2r)\le C_D(R_0)V(x,r)
\]
for every $x$ and every $0<r\le R_0$. The constant may depend on the dimension,
the curvature bound, and $R_0$.

If the Ricci curvature is nonnegative, the doubling condition holds uniformly
for all centers and all radii; see \cite{SC}.
\end{remark}

\begin{definition}\label{def:volumegrowth}
The volume growth is said to be:
\begin{enumerate}[label=(\roman*)]
\item \emph{uniformly at most polynomial} if there exist $C\ge1$ and $n\in\mathbb N$,
$n\ge1$, such that
\[
V(x,r)\le Cr^nV(x,1)
\qquad \text{for every }x\in X,\ r\ge1;
\]

\item \emph{uniformly at least exponential} if there exist constants $C,\varepsilon>0$
and $r_0>0$ such that
\[
V(x,r)\ge Ce^{\varepsilon r}
\qquad \text{for every }x\in X,\ r\ge r_0.
\]
\end{enumerate}
\end{definition}
The restriction to sufficiently large radii in part (ii) is essential for non-atomic
measures. These uniform definitions are stronger than corresponding growth conditions
imposed at a single fixed center.

\begin{definition}\label{def:HK}
The Dirichlet space $(X,d,\mu,\mathcal E,\mathcal F)$ satisfies the
\emph{heat kernel estimate $(HK_\beta)$}, with $\beta\ge2$, if its semigroup admits
a heat kernel and there exist constants $c_1,c_2,C_1,C_2>0$ such that
\begin{equation}\label{eq11}
K_t(x,y)\ge
\frac{c_1}{V(x,t^{1/\beta})}g_\beta(c_2d(x,y),t)
\end{equation}
and
\begin{equation}\label{eq12}
K_t(x,y)\le
\frac{C_1}{V(x,t^{1/\beta})}g_\beta(C_2d(x,y),t)
\end{equation}
for every $t>0$ and almost every $(x,y)\in X\times X$, where
\begin{equation}\label{eq13}
g_\beta(r,t):=
\exp\left(-\left(\frac{r^\beta}{t}\right)^{1/(\beta-1)}\right).
\end{equation}
\end{definition}

\begin{theorem}\label{thm:equivalence}
Let $(X,d,\mu)$ be as in Definition \ref{MMS}. If $(HK_\beta)$ holds, then $\mu$
satisfies the volume doubling condition \eqref{eq04}.
\end{theorem}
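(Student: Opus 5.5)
The plan is to obtain doubling directly from the lower bound \eqref{eq11}, combined with the sub-Markovian mass bound in Definition \ref{def:heatkernel}(a). The upper bound \eqref{eq12} will not be needed. The idea is that, at time $t=r^\beta$, the lower estimate bounds $K_t(x,\cdot)$ from below by a multiple of $1/V(x,r)$ on a much larger ball. Since the total mass is at most $1$, that larger ball cannot have much more volume than $B_r(x)$.

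Concretely, fix $r>0$ and set $t=r^\beta$. For $y\in B_{4r}(x)$ we have $c_2d(x,y)<4c_2r$, hence
\[
g_\beta(c_2d(x,y),t)\ge \exp\bigl(-(4c_2)^{\beta/(\beta-1)}\bigr)=:c_3>0.
\]
Integrating \eqref{eq11} over $B_{4r}(x)$ and using (a) then gives
\[
c_1c_3\,\frac{V(x,4r)}{V(x,r)}\le\int_{B_{4r}(x)}K_t(x,y)\,d\mu(y)\le 1 .
\]
This yields $V(x,4r)\le C\,V(x,r)$ with $C=(c_1c_3)^{-1}$, which does not depend on $x$ or $r$.

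The main obstacle is that \eqref{eq11} and (a) hold only almost everywhere, and the exceptional sets depend on $t$. I would handle this in three steps.

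\emph{Step 1 (countably many radii).} By Fubini, for each rational $q>0$ there is a $\mu$-null set $N_q$ such that, for $x\notin N_q$, \eqref{eq11} holds for $\mu$-a.e.\ $y$ with $t=q^\beta$, and (a) holds at $x$. Let $N=\bigcup_q N_q$, which is still $\mu$-null. The computation above then gives $V(x,4q)\le CV(x,q)$ for all $x\notin N$ and all rational $q$.

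\emph{Step 2 (all radii at good points).} Let $x\notin N$ and $r>0$. Choose a rational $q\in[r/2,r]$. Then
\[
V(x,2r)\le V(x,4q)\le CV(x,q)\le CV(x,r).
\]

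\emph{Step 3 (all centres).} Let $x\in X$ be arbitrary and $r>0$. Since $\mu$ has full support, $X\setminus N$ is dense, so we may pick $x'\notin N$ with $d(x,x')<\varepsilon$, where $\varepsilon<r/5$. Choose a rational $q$ with $(2r+\varepsilon)/4\le q\le r-\varepsilon$; this interval is nonempty because $\varepsilon<r/5$. Then
\[
B_{2r}(x)\subseteq B_{2r+\varepsilon}(x')\subseteq B_{4q}(x'),
\qquad
B_q(x')\subseteq B_{q+\varepsilon}(x)\subseteq B_r(x).
\]
Combining these inclusions with Step 1 gives
\[
V(x,2r)\le V(x',4q)\le CV(x',q)\le CV(x,r),
\]
which is \eqref{eq04} with $C_D=C$, for every $x\in X$ and every $r>0$.
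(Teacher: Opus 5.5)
Your proposal is correct and follows essentially the paper's approach: integrate the lower bound \eqref{eq11} over a larger ball, compare with the mass bound $\int_X K_t(z,y)\,d\mu(y)\le1$, and use full support of $\mu$ to replace $x$ by a nearby point where these a.e.\ statements hold. The only difference is bookkeeping. The paper fixes $(x,r)$ first, takes the single time $t=(r/2)^\beta$, and picks one good centre $z\in B_{r/4}(x)$. This makes your countable union over rational radii and your Steps 2--3 unnecessary, though your version is also valid.
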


\begin{proof}
Only the lower bound in $(HK_\beta)$ and the sub-Markovian property
are needed.

Fix $x\in X$ and $r>0$, and set
\[
t=(r/2)^\beta.
\]
By Fubini's theorem, for $\mu$-almost every $z\in X$, the lower heat
kernel bound holds for $\mu$-almost every $y\in X$, and
\[
\int_X K_t(z,y)\,d\mu(y)\le1.
\]
Since $\mu$ has full support, we may choose such a point
$z\in B_{r/4}(x)$.
For $y\in B_{2r}(x)$, we have $d(z,y)<3r$, while
\[
B_{r/2}(z)\subseteq B_r(x).
\]
Consequently, for almost every $y\in B_{2r}(x)$,
\[
K_t(z,y)
\ge
\frac{c_1}{V(z,r/2)}
\exp\left(-(6c_2)^{\beta/(\beta-1)}\right)
\ge
\frac{a}{V(x,r)},
\]
where
\[
a:=c_1\exp\left(-(6c_2)^{\beta/(\beta-1)}\right)>0.
\]
Integrating over $B_{2r}(x)$ gives
\[
1\ge \int_{B_{2r}(x)}K_t(z,y)\,d\mu(y)
\ge a\,\frac{V(x,2r)}{V(x,r)}.
\]
Thus the volume doubling condition holds, with
$C_D=\max\{1,a^{-1}\}$.
\end{proof}

\begin{remark}
The converse does not hold in general, even when $\beta=2$.
For the Laplace--Beltrami operator on a complete Riemannian manifold, two-sided Gaussian
heat kernel estimates are characterized by volume doubling together with a uniform
scale-invariant $L^2$-Poincar\'e inequality; see \cite{SC}.

For general Dirichlet forms, analogous characterizations require suitable assumptions
on the form and on the compatibility between the form and the metric.
For $\beta>2$, additional analytic conditions must likewise be stated precisely.
\end{remark}


Further background on heat kernel bounds with power
prefactors, Ahlfors regularity, and the distinction
between local and non-local forms is collected in
Appendix \ref{app:kernel-background}.

We now recall the Gaussian estimates used in the
Riemannian applications; see \cite{LiYau,SC}.

\begin{theorem}\label{thm:ricci}
Let $M$ be a complete connected Riemannian manifold of dimension $n$,
with Riemannian volume $\mu$, and assume that
\[
\operatorname{Ric}\ge0.
\]
Let $K_t$ be the heat kernel of the Laplace--Beltrami operator.
Then there exist positive constants $c_1,c_2,c_3,c_4$, depending only on $n$, such that
\begin{equation}\label{riccipositive}
\frac{c_1}{V(x,\sqrt t)}
\exp\left(-\frac{d(x,y)^2}{c_2t}\right)
\le K_t(x,y)\le
\frac{c_3}{V(x,\sqrt t)}
\exp\left(-\frac{d(x,y)^2}{c_4t}\right)
\end{equation}
for every $x,y\in M$ and every $t>0$.
\end{theorem}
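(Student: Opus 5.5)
We would not build the heat kernel theory from scratch. Instead we would reduce the statement to the Grigor'yan--Saloff-Coste characterization recalled after Theorem \ref{thm:equivalence}: on a complete Riemannian manifold, two-sided Gaussian bounds of the form \eqref{riccipositive} are equivalent to uniform volume doubling together with a uniform scale-invariant $L^2$-Poincar\'e inequality (see \cite{SC}). The proof then has three steps: (1) volume doubling from $\operatorname{Ric}\ge0$; (2) the Poincar\'e inequality from $\operatorname{Ric}\ge0$; (3) invoking the equivalence and checking that the constants depend only on $n$.

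For step (1) we would use the Bishop--Gromov comparison theorem. Under $\operatorname{Ric}\ge0$, the ratio $r\mapsto V(x,r)/r^n$ is nonincreasing for every $x$. Hence
\[
V(x,2r)\le 2^nV(x,r)
\qquad\text{for all }x\in M,\ r>0,
\]
so \eqref{eq04} holds with $C_D=2^n$.

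For step (2) we would use Buser's inequality. Alternatively, we would use the segment inequality of Cheeger--Colding, which needs only $\operatorname{Ric}\ge0$ and doubling. Either gives
\[
\int_{B_r(x)}|u-u_{B_r(x)}|^2\,d\mu\le C(n)\,r^2\int_{B_{2r}(x)}|\nabla u|^2\,d\mu
\]
for all balls and all smooth $u$. On a geodesic space, a Whitney covering argument upgrades the weak form, with $B_{2r}$ on the right, to the strong form.

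Step (3) is where the real analytic work lies, so it is the step we expect to be the main obstacle. The standard route, and the one we would follow if we had to unfold it, is as follows:
\begin{itemize}
\item Run Moser iteration on doubling plus Poincar\'e to obtain the uniform parabolic Harnack inequality.
\item From Harnack and the semigroup property \eqref{eq10}, derive the on-diagonal bounds $K_t(x,x)\asymp V(x,\sqrt t)^{-1}$.
\item Obtain the off-diagonal Gaussian upper bound by Davies' integrated maximum principle, i.e.\ weighted $L^2$ estimates with weight $e^{\alpha d(\cdot,y)}$. Combine it with the inequality $V(x,\sqrt t)\le C(1+d(x,y)/\sqrt t)^nV(y,\sqrt t)$ from \eqref{eq05bis}; this lets us symmetrize the prefactor to the center $x$ at the cost of enlarging $c_4$.
\item Obtain the lower bound by chaining the Harnack inequality along a minimizing geodesic from $x$ to $y$. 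This requires about $d(x,y)^2/t$ balls of radius comparable to $t/d(x,y)$, and the chain produces the factor $\exp(-d(x,y)^2/(c_2t))$.
\end{itemize}

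A more direct alternative is the Li--Yau gradient estimate $|\nabla\log u|^2-\partial_t\log u\le n/(2t)$ for positive solutions. Integrating it along geodesics gives the sharp Harnack inequality
\[
u(x,s)\le u(y,t)\left(\frac ts\right)^{n/2}e^{d(x,y)^2/(4(t-s))},
\]
from which the same chaining and on-diagonal arguments produce \eqref{riccipositive}.

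Finally, completeness ensures that the minimal heat kernel is the one generated by the Friedrichs extension, i.e.\ by the Dirichlet form of Section \ref{mf}. We would also verify that at no stage does a constant depend on anything other than $n$: the doubling constant $2^n$, the Poincar\'e constant $C(n)$, and the Moser/Li--Yau constants are all dimensional.
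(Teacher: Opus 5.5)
Your proposal is correct and matches the paper, which gives no proof of its own and simply cites \cite{LiYau,SC}. Your main route (Bishop--Gromov doubling with $C_D=2^n$, a Buser or segment-inequality Poincar\'e inequality with constant $C(n)$, then the Grigor'yan--Saloff-Coste equivalence, so all constants depend only on $n$) is the \cite{SC} argument, and your alternative via $|\nabla\log u|^2-\partial_t\log u\le n/(2t)$ is the original \cite{LiYau} proof. If you unfold step (3), note that the on-diagonal lower bound also needs a mass-retention estimate for $\int_{B_{R\sqrt t}(x)}K_t(x,z)\,d\mu(z)$; you can get it, for example, from stochastic completeness (valid here since $V(x,r)\le\omega_n r^n$) combined with the Gaussian upper bound.
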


\begin{remark}
If, in addition,
\[
a r^n\le V(x,r)\le b r^n
\qquad \text{for every }x\in M,\ r>0
\]
for some $a,b>0$, then \eqref{riccipositive} yields Gaussian bounds with prefactor
$t^{-n/2}$. The constants then also depend on the volume bounds.

Nonnegative Ricci curvature alone does not imply this additional volume condition.
\end{remark}

\subsection{Exponential heat kernel bounds}

We also consider heat kernels with exponential decay in time.

\begin{definition}\label{def03}
A heat kernel satisfies \eqref{E1} if there exists $\lambda>0$ such that,
for every $t_0>0$, there is a constant $C=C(t_0)>0$ satisfying
\begin{equation}\label{E1}\tag{E1}
K_t(x,y)\le Ce^{-\lambda t}
\end{equation}
for every $t\ge t_0$ and almost every $(x,y)\in X\times X$.

A heat kernel satisfies \eqref{E2} if there exists $\lambda>0$ such that,
for every $\varepsilon\in(0,\lambda)$ and every $t_0>0$, there are
a constant $c=c(\varepsilon,t_0)>0$ and a nonempty relatively compact
open set $\Omega=\Omega(\varepsilon,t_0)\subset X$ such that
\begin{equation}\label{E2}\tag{E2}
K_t(x,y)\ge ce^{-(\lambda+\varepsilon)t}
\end{equation}
for every $t\ge t_0$ and almost every $(x,y)\in\Omega\times\Omega$.

Whenever \eqref{E1} and \eqref{E2} are imposed simultaneously,
the same value of $\lambda$ is used.
\end{definition}

Since $\mu$ has full support and $\Omega$ is nonempty, open, and relatively compact,
\[
0<\mu(\Omega)<\infty.
\]

Let $M$ be a complete connected noncompact Riemannian manifold, and denote by
$\lambda_1(M)$ the bottom of the $L^2$-spectrum of $-\Delta$. For its heat kernel,
\[
\lim_{t\to\infty}\frac{\log K_t(x,y)}{t}=-\lambda_1(M)
\]
locally uniformly in $M\times M$ (see \cite{ChavelKarp}). Consequently, if $\lambda_1(M)>0$,
condition \eqref{E2} holds with $\lambda=\lambda_1(M)$.
In fact, it holds on every prescribed nonempty relatively compact open set $\Omega$,
with a constant depending on $\Omega$, $\varepsilon$, and $t_0$.

Indeed, the locally uniform large-time asymptotics yield the lower
bound for all sufficiently large times. Positivity and continuity
of the heat kernel on
\[
\overline{\Omega}\times\overline{\Omega}\times[t_0,T]
\]
then allow the constant to be decreased so that the same bound holds
for every $t\ge t_0$.

The upper bound \eqref{E1}, which is uniform over the entire manifold, requires
additional analytic control. A sufficient assumption is the Faber--Krahn inequality
\[
\lambda_1(\Omega)\ge \frac{c}{\mu(\Omega)^{2/n}}
\]
for every nonempty relatively compact open set $\Omega\subset M$, where $n=\dim M$
and $\lambda_1(\Omega)$ is the bottom of the Dirichlet spectrum on $\Omega$.
Under this assumption and $\lambda_1(M)>0$, condition \eqref{E1} holds with
$\lambda=\lambda_1(M)$. See \cite{GrilloMeglioliPunzo2023}.

\subsection{Graphs: a separate discussion}

We distinguish discrete weighted graphs from metric graphs.
Discrete graphs do not belong to the class in Definition \ref{MMS};
nevertheless, the general theory of regular Dirichlet forms applies to them.

\subsubsection{Discrete weighted graphs}

Let $G=(V,E)$ be a countable connected graph with at least two vertices. Let
\[
b:V\times V\to[0,\infty)
\]
be symmetric edge weights satisfying
\[
b(x,x)=0,
\qquad
\sum_{y\in V}b(x,y)<\infty
\quad \text{for every }x\in V.
\]
We understand that $x$ and $y$ are adjacent precisely when $b(x,y)>0$.
Let $m:V\to(0,\infty)$, and equip $V$ with the discrete topology and the measure
\[
m(A)=\sum_{x\in A}m(x).
\]

The Hilbert space $\ell^2(V,m)$ has inner product
\[
\langle u,v\rangle_{\ell^2(V,m)}
=\sum_{x\in V}u(x)v(x)m(x).
\]

On finitely supported functions, define
\[
\mathcal E(u,v)
=\frac12\sum_{x,y\in V}b(x,y)(u(x)-u(y))(v(x)-v(y)).
\]
The summability assumption on $b$ ensures that this expression is finite for finitely
supported functions. The form is closable in $\ell^2(V,m)$, and its closure
$(\mathcal E,\mathcal F)$ is a regular Dirichlet form.

Here $\mathcal F$ is specifically the closure of the finitely supported
functions in the form norm; it need not coincide with the space of all
$\ell^2(V,m)$ functions having finite energy.

This form is non-local. Indeed, if $x\ne y$ and $b(x,y)>0$, then
\[
\mathcal E(\mathbf 1_{\{x\}},\mathbf 1_{\{y\}})=-b(x,y)\ne0,
\]
although the two functions have disjoint compact supports.
In particular, the form is not strongly local.
It is a pure jump form, with no killing term.

The associated nonnegative self-adjoint operator $L=L_{\mathcal E}$ satisfies
\[
D(L)\subseteq\mathcal F,
\qquad
\mathcal E(u,v)=\langle Lu,v\rangle_{\ell^2(V,m)}
\]
for every $u\in D(L)$ and $v\in\mathcal F$. For $u\in D(L)$,
\[
(Lu)(x)=\frac{1}{m(x)}\sum_{y\in V}b(x,y)(u(x)-u(y)).
\]
Thus, with $\Delta=-L$,
\[
(\Delta u)(x)=\frac{1}{m(x)}\sum_{y\in V}b(x,y)(u(y)-u(x)).
\]

The semigroup $P_t=e^{-tL}$ admits a heat kernel with respect to the
measure $m$, given by
\[
K_t(x,y):=\frac{(P_t\mathbf 1_{\{y\}})(x)}{m(y)}.
\]
Thus,
\[
P_tu(x)=\sum_{y\in V}K_t(x,y)u(y)m(y).
\]
Since every vertex has positive measure, almost-everywhere identities
in the spatial variables are pointwise identities in this setting.

As for every closed nonnegative symmetric form, $D(L)$ is dense in $\mathcal F$
with respect to the form norm. Moreover, if $u\in\mathcal F$ and $w\in\ell^2(V,m)$
satisfy
\[
\mathcal E(u,v)=\langle w,v\rangle_{\ell^2(V,m)}
\]
for every $v$ in a form core, then
\[
u\in D(L),
\qquad Lu=w.
\]

\begin{remark}
Combinatorial connectedness does not imply connectedness of the discrete topology.
A discrete space with at least two points is not a geodesic metric space in the sense
of Definition \ref{MMS}.

Furthermore, estimates of the form \eqref{Htheta}, with $\alpha>0$ and $\Phi_1(1)>0$,
cannot hold on such a graph for every $t>0$. Indeed, the total mass inequality implies
\[
K_t(x,x)m(x)\le1,
\]
whereas the lower bound in \eqref{Htheta} would give
\[
K_t(x,x)\ge t^{-\alpha/\beta}\Phi_1(0)\longrightarrow\infty
\qquad \text{as }t\downarrow0.
\]
Heat kernel estimates on discrete graphs therefore require a formulation adapted
to their small-scale and short-time behavior.
\end{remark}

\subsubsection{Metric graphs}

For a precise metric graph example, let $X$ be the geometric realization
of a connected, locally finite graph with countably many edges and
at least one edge. Identify each edge $e$ with an interval
$(0,\ell_e)$, and assume that
\[
0<\ell_*\le\ell_e<\infty
\qquad \text{for every edge }e
\]
for some fixed $\ell_*>0$. Equip $X$ with the path metric and the measure
$\mu$ given by Lebesgue measure on each edge, with vertices having
measure zero.

These assumptions ensure that $X$ is a complete, locally compact geodesic metric space.
Define
\[
\begin{split}
\mathcal F=\bigg\{
u\in L^2(X,\mu):\;&
u|_e\in W^{1,2}(0,\ell_e)\text{ for every }e,\\
&
u\text{ has matching traces at each vertex},\\
&
\sum_e\int_0^{\ell_e}|u'_e(s)|^2\,ds<\infty
\bigg\}.
\end{split}
\]
The matching trace condition gives a continuous representative on the metric graph. Set
\[
\mathcal E(u,v)=\sum_e\int_0^{\ell_e}u'_e(s)v'_e(s)\,ds,
\qquad u,v\in\mathcal F.
\]
This is a regular strongly local Dirichlet form.

Its associated nonnegative self-adjoint operator acts on each edge as
\[
(Lu)|_e=-u''_e.
\]
Its domain consists of the functions $u\in\mathcal F$ such that
\[
u_e\in W^{2,2}(0,\ell_e)
\quad\text{for every }e,
\qquad
\sum_e\|u''_e\|_{L^2(0,\ell_e)}^2<\infty,
\]
and the Kirchhoff condition
\[
\sum_{e\sim v}\partial_{\nu_e}u(v)=0
\]
holds at every vertex $v$. Here $\partial_{\nu_e}u(v)$ denotes the derivative at $v$
directed from the vertex into the incident edge, and each incidence is counted separately.
At a vertex of degree one, this condition is Neumann.

Other vertex conditions require a corresponding choice of the form domain and,
in some cases, additional terms in the form.

\section{Statements of the main results}\label{statements}
\setcounter{equation}{0}

We work with the regular symmetric Dirichlet form, the associated
Laplacian $\Delta=-L$, and the heat semigroup $(P_t)_{t\ge0}$ introduced
in Section \ref{mf}. Throughout this section, we assume that the
semigroup admits a nonnegative measurable heat kernel $K_t(x,y)$
in the sense of Definition \ref{def:heatkernel}.
In particular, the semigroup is positivity preserving and sub-Markovian;
conservativity is not assumed.

We always assume
\begin{equation}\label{mainassumptions}\tag{H}
\begin{cases}
\text{(i) $(X,d,\mu)$ is a metric measure space as in Definition \ref{MMS};}\\[3pt]
\text{(ii) $f:[0,\infty)\to[0,\infty)$ is locally Lipschitz, $f(0)=0$, and $f(a)>0$ for $a>0$;}\\[3pt]
\text{(iii) $h\in C([0,\infty))$ and $h(t)>0$ for every $t>0$;}\\[3pt]
\text{(iv) $u_0\in C(X)\cap L^\infty(X,\mu)$, $u_0\ge0$, and $u_0\not\equiv0$.}
\end{cases}
\end{equation}

\begin{definition}\label{defsol}
Let $T\in(0,\infty]$ and let $u_0\in L^\infty(X,\mu)$ be nonnegative.
A nonnegative mild solution of problem \eqref{problema} on $[0,T)$
is a jointly measurable function
\[
u:X\times[0,T)\to[0,\infty)
\]
such that $u(\cdot,0)=u_0$ almost everywhere,
\[
\sup_{0\le t\le\tau}\|u(\cdot,t)\|_{L^\infty(X,\mu)}<\infty
\qquad\text{for every }0<\tau<T,
\]
and, for every $t\in(0,T)$,
\begin{equation}\label{solmild}
\begin{aligned}
u(x,t)
&=(P_tu_0)(x)
+\int_0^t h(s)\bigl(P_{t-s}(f(u(\cdot,s)))\bigr)(x)\,ds\\
&=\int_X K_t(x,y)u_0(y)\,d\mu(y)\\
&\quad+\int_0^t\int_X K_{t-s}(x,y)h(s)f(u(y,s))\,d\mu(y)\,ds
\end{aligned}
\end{equation}
for $\mu$-almost every $x\in X$.
The time integral is understood as a Lebesgue integral of the kernel
expression; its endpoint values are immaterial.

Two mild solutions are identified if their spatial sections agree
$\mu$-almost everywhere for every $t\in[0,T)$.
A mild solution is called global if $T=\infty$.
\end{definition}

For every nonnegative jointly measurable function satisfying the local
boundedness requirement in Definition \ref{defsol}, the integral in
\eqref{solmild} is finite almost everywhere for each time in its interval
of definition. Indeed, if
\[
B_\tau:=\sup_{0\le s\le\tau}\|u(\cdot,s)\|_{L^\infty(X,\mu)},
\]
then positivity and $L^\infty$-contractivity give
\[
\left\|\int_0^t h(s)P_{t-s}(f(u(\cdot,s)))\,ds\right\|_{L^\infty(X,\mu)}
\le \max_{0\le a\le B_\tau}f(a)\int_0^t h(s)\,ds,
\qquad 0<t\le\tau.
\]

When referring to a mild solution on a closed interval $[0,T]$, we
require \eqref{solmild} also at $t=T$ and
\[
\sup_{0\le t\le T}\|u(\cdot,t)\|_{L^\infty(X,\mu)}<\infty.
\]

\begin{proposition}\label{prop:continuation}
In the semigroup framework fixed above, assume that
$f:[0,\infty)\to[0,\infty)$ is locally Lipschitz,
$h\in C([0,\infty))$ is nonnegative, and
$u_0\in L^\infty(X,\mu)$ is nonnegative.
Then there exists a unique maximal nonnegative mild solution on
$[0,T_{\max})$, where $T_{\max}\in(0,\infty]$.
Here maximal means that the solution admits no extension as a mild
solution to a strictly larger interval of the same form.
If $T_{\max}<\infty$, then
\[
\lim_{t\uparrow T_{\max}}\|u(\cdot,t)\|_{L^\infty(X,\mu)}=\infty.
\]
\end{proposition}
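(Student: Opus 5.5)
We will follow the standard Banach fixed point argument in $L^\infty$, adapted to the kernel setting where $P_t$ is only a positive $L^\infty$-contraction and need not be strongly continuous. First we extend $f$ to $\mathbb R$ by $f(a):=f(0)$ for $a<0$ (or work directly in the cone of nonnegative functions). For $\tau>0$ and $R:=2\|u_0\|_{L^\infty(X,\mu)}+1$, let $Z_\tau$ be the set of jointly measurable $u:X\times[0,\tau]\to[0,\infty)$ with $\sup_{0\le t\le\tau}\|u(\cdot,t)\|_{L^\infty}\le R$, equipped with the metric $\sup_t\|u(\cdot,t)-v(\cdot,t)\|_{L^\infty}$. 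This space is complete: limits of uniformly Cauchy sequences of jointly measurable functions can be taken pointwise along a subsequence, outside a null set, and remain measurable. Define
\[
\Phi(u)(x,t):=(P_tu_0)(x)+\int_0^t\int_X K_{t-s}(x,y)h(s)f(u(y,s))\,d\mu(y)\,ds .
\]
Joint measurability of $\Phi(u)$ follows from Tonelli, because the integrand is nonnegative and jointly measurable in $(x,y,s,t)$. We will need a measurable version of $(x,t)\mapsto P_tu_0(x)$, which we take to be the kernel integral. By positivity and sub-Markovianity, $0\le\Phi(u)\le\|u_0\|_\infty+\max_{[0,R]}f\cdot\int_0^\tau h$, and
\[
\|\Phi(u)-\Phi(v)\|\le \mathrm{Lip}_{[0,R]}(f)\int_0^\tau h\,\|u-v\| .
\]
Choosing $\tau$ small, depending only on $R$ (hence on an upper bound for $\|u_0\|_\infty$), makes $\Phi$ a contraction of $Z_\tau$. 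This gives local existence. Uniqueness in the class of all nonnegative mild solutions bounded on $[0,\tau']$ follows from the same Lipschitz estimate combined with a Gronwall inequality applied to $\phi(t)=\|u(t)-v(t)\|_\infty$, using the bound valid on the common range.

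The continuation step carries the main technical subtlety. We restart at time $t_0$ from $u(\cdot,t_0)$ with the shifted coefficient $h(t_0+\cdot)$. The concatenated function must satisfy \eqref{solmild} for every $t$, and verifying this requires the semigroup identity \eqref{eq10} inside the Duhamel integral:
\[
P_{t-t_0}\Big(P_{t_0}u_0+\int_0^{t_0}h(s)P_{t_0-s}f(u(s))\,ds\Big)=P_tu_0+\int_0^{t_0}h(s)P_{t-s}f(u(s))\,ds .
\]
This identity holds a.e. by Fubini and \eqref{eq10}. The only care needed is that \eqref{eq10} holds for a.e. $(x,z)$ at each fixed pair of times, so we integrate in $s$ only after fixing $x$ outside a null set, which Fubini on $X\times(0,t_0)$ permits. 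Because the local existence time depends only on the size of the data, a solution bounded on $[0,T)$ with $T<\infty$ can be extended, restarting at $t_0$ close to $T$, past $T$.

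Finally we define the maximal solution. Uniqueness makes any two mild solutions agree on their common interval, so we let $T_{\max}$ be the supremum of existence times and glue the solutions together. If $T_{\max}<\infty$ and $\liminf_{t\uparrow T_{\max}}\|u(t)\|_\infty<\infty$, we pick $t_n\uparrow T_{\max}$ with bounded norms $\le B$. Restarting at $t_n$ gives existence on $[t_n,t_n+\tau(B))$ with $\tau(B)$ independent of $n$. Since $h$ is bounded near $T_{\max}$, the time $\tau(B)$ can be taken uniform, and for large $n$ this extends the solution beyond $T_{\max}$, a contradiction. Hence $\|u(t)\|_\infty\to\infty$, which gives a genuine limit rather than merely a $\limsup$.
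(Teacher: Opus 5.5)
Your proposal is correct and follows essentially the same route as the paper: a small-time Banach fixed point in $L^\infty$ using positivity and sub-Markovianity, uniqueness from the local Lipschitz bound on $f$, concatenation through the semigroup identity combined with Tonelli's theorem, and the blow-up alternative from a restart time that depends only on a bound for the data and for $h$ near $T_{\max}$. The differences are minor. You contract in $\sup_t\|\cdot\|_{L^\infty(X,\mu)}$, so the fixed point satisfies the mild formula at every $t$ directly; the paper instead contracts in $L^\infty(X\times(0,\eta),\mu\otimes dt)$ and then selects a suitable representative. You also use Gronwall for uniqueness, whereas the paper argues from the supremum $t^*$ of the times up to which the two solutions coincide.
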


\subsection{Blow-up of solutions}

Throughout this subsection, in addition to \eqref{mainassumptions},
we assume that $f$ is nondecreasing and convex and that
\begin{equation}\label{Hf}\tag{H1}
\int_1^\infty\frac{ds}{f(s)}<\infty.
\end{equation}

For $a>0$, define
\begin{equation}\label{J}
J(a):=\int_a^\infty\frac{dz}{f(z)},
\end{equation}
and set $J(0):=\infty$. We also define
\begin{equation}\label{eq44}
H(t):=\int_0^t h(s)\,ds,
\qquad t\ge0.
\end{equation}

The positivity of $f$ on $(0,\infty)$ and \eqref{Hf} ensure that
$J(a)<\infty$ for every $a>0$.
Moreover, $J$ is continuously differentiable and strictly decreasing
on $(0,\infty)$, with
\[
J'(a)=-\frac1{f(a)},
\qquad \lim_{a\to\infty}J(a)=0.
\]
Since $f$ is locally Lipschitz and $f(0)=0$, there exists $L_f>0$ such that
\[
f(a)\le L_fa\qquad\text{for }0\le a\le1.
\]
Consequently,
\[
\lim_{a\downarrow0}J(a)=\infty.
\]
Thus $J$ maps $(0,\infty)$ bijectively onto $(0,\infty)$, and its inverse
\[
J^{-1}:(0,\infty)\to(0,\infty)
\]
is well defined and strictly decreasing.

Our main criterion is expressed in terms of the linear evolution
$P_Tu_0$ at a single time.

\begin{theorem}\label{teo1}
Assume \eqref{mainassumptions} and \eqref{Hf}, and let $f$ be
nondecreasing and convex.
If there exists $T>0$ such that
\begin{equation}\label{generalBU}
\|P_Tu_0\|_{L^\infty(X,\mu)}\ge J^{-1}(H(T)),
\end{equation}
then the maximal nonnegative mild solution of problem \eqref{problema}
satisfies $T_{\max}\le T$ and blows up in the sense of
Proposition \ref{prop:continuation}.
In particular, no global nonnegative mild solution exists in the
sense of Definition \ref{defsol}.
Equivalently, condition \eqref{generalBU} can be written as
\begin{equation}\label{generalBU-J}
H(T)\ge J\bigl(\|P_Tu_0\|_{L^\infty(X,\mu)}\bigr).
\end{equation}
\end{theorem}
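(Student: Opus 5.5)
Argue by contradiction: assume $T_{\max}>T$, so the maximal solution $u$ from Proposition \ref{prop:continuation} is a mild solution on $[0,T]$ with $B:=\sup_{0\le t\le T}\|u(\cdot,t)\|_{L^\infty}<\infty$. The aim is a strict inequality $H(T)<J(\|P_Tu_0\|_\infty)$, contradicting \eqref{generalBU-J}. The tool is the backward transform: for fixed $x$ (a.e.) set
\[
\phi(t):=\bigl(P_{T-t}u(\cdot,t)\bigr)(x),\qquad 0\le t\le T .
\]
Applying $P_{T-t}$ to \eqref{solmild} and using the semigroup identity \eqref{eq10} with Fubini (everything is nonnegative and bounded, so this is legitimate) gives
\[
\phi(t)=(P_Tu_0)(x)+\int_0^t h(s)\,\bigl(P_{T-s}f(u(\cdot,s))\bigr)(x)\,ds .
\]
Thus $\phi$ is absolutely continuous with $\phi'(t)=h(t)\,(P_{T-s}f(u(s)))(x)|_{s=t}$ for a.e.\ $t$.

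Next I would apply the Jensen inequality from the earlier remark, $f(P_\tau v)\le P_\tau f(v)$, which holds since $f$ is convex, nondecreasing, $f(0)=0$ and $P_\tau$ is sub-Markovian. This yields $\phi'(t)\ge h(t)f(\phi(t))$ and $\phi(0)=(P_Tu_0)(x)=:a$. For $a>0$, differentiating $J(\phi(t))$ gives $\frac{d}{dt}J(\phi)=-\phi'/f(\phi)\le -h$. Hence
\[
J(\phi(T))\le J(a)-H(T).
\]
Since $J>0$, this forces $H(T)<J\bigl((P_Tu_0)(x)\bigr)$ for a.e.\ $x$ with $(P_Tu_0)(x)>0$. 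Moreover $\phi(T)=u(x,T)\le B$, so in fact $H(T)\le J(a)-J(B)$, where $J(B)>0$. This is the uniform positive gap, and it avoids any need for the supremum to be attained.

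Finally, take $a_n=(P_Tu_0)(x_n)\uparrow\|P_Tu_0\|_\infty=:m$ along points where the inequality holds; $m>0$ because $u_0\not\equiv0$ is continuous and $K$ is nondegenerate. Continuity of $J$ gives $H(T)\le J(m)-J(B)<J(m)$, contradicting \eqref{generalBU-J}. Therefore $T_{\max}\le T$, and the blow-up statement follows from Proposition \ref{prop:continuation}.

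The main obstacle is the rigorous derivation of the ODE inequality for a.e.\ $x$ from a merely measurable kernel. Fubini produces a full-measure set of $x$ for each $t$, so the plan is to work with the integral form directly. Since $s\mapsto(P_{T-s}f(u(s)))(x)\ge f(P_{T-s}u(s))(x)=f(\phi(s))$ for a.e.\ $(x,s)$ jointly, one has $\phi(t)\ge a+\int_0^t h f(\phi)\,ds$ for a.e.\ $x$, simultaneously for all $t$, by joint measurability and continuity in $t$ of the right side. An integral comparison then gives $J(\phi(t))\le J(a)-H(t)$, using the nondecreasing property of $f$ via the ODE solution $J^{-1}(J(a)-H(t))$ as a subsolution bound.
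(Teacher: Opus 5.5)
Your proposal is correct and follows essentially the same route as the paper, which proves Theorem \ref{teo1} by contradiction via Lemma \ref{lemma2}: the backward transform $P_{T-t}u(t)$, the sub-Markovian Jensen inequality, the differential inequality for $J(\phi)$, and the uniform gap $J(B)>0$ coming from boundedness up to time $T$. The paper instead lets $t\uparrow T$ using $\phi\le M$ and takes the essential supremum of the bound $J^{-1}(H(T)+J(M))$, which is equivalent to your evaluation at $t=T$ combined with an approximating sequence; also, you do not need any nondegeneracy of $K$ to get $m>0$, since \eqref{generalBU} itself gives $m\ge J^{-1}(H(T))>0$.
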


\begin{remark}\label{rem:large-data-BU}
Theorem \ref{teo1} includes blow-up for large initial data without
requiring a condition on the long-time behaviour of the heat kernel.
Indeed, let $\varphi\in C(X)\cap L^\infty(X,\mu)$ be nonnegative and
suppose that, for some $T>0$,
\[
\|P_T\varphi\|_{L^\infty(X,\mu)}>0.
\]
For $u_0=A\varphi$, linearity gives
\[
\|P_Tu_0\|_{L^\infty(X,\mu)}=A\|P_T\varphi\|_{L^\infty(X,\mu)}.
\]
Thus the solution blows up no later than $T$ whenever
\[
A\ge\frac{J^{-1}(H(T))}{\|P_T\varphi\|_{L^\infty(X,\mu)}}.
\]
In particular, when $h\equiv1$ and $f(a)=a^q$, with $q>1$, the criterion reads
\[
\|P_Tu_0\|_{L^\infty(X,\mu)}\ge ((q-1)T)^{-1/(q-1)}.
\]
Hence it applies to sufficiently large multiples of $\varphi$ for every
$q>1$, provided $\|P_T\varphi\|_{L^\infty(X,\mu)}>0$.
\end{remark}

The next corollary gives sufficient conditions expressed directly in
terms of the heat kernel.
Choose $x_0\in X$, $r>0$, and $\delta>0$ such that
\[
u_0(x)\ge\delta\qquad\text{for every }x\in B_r(x_0).
\]
Such a choice is possible because $u_0$ is continuous, nonnegative,
and nontrivial. Recall the notation $V(x_0,r)$ from \eqref{volumepalla};
in particular, $0<V(x_0,r)<\infty$.
Define
\begin{equation}\label{infKernel}
\underline{\mathcal K}(t):=
\operatorname*{ess\,inf}_{(x,y)\in B_r(x_0)\times B_r(x_0)}K_t(x,y),
\qquad t>0,
\end{equation}
where the essential infimum is taken with respect to $\mu\otimes\mu$.
Strict positivity of $\underline{\mathcal K}(t)$ is not assumed.

\begin{corollary}\label{corteo1}
Assume the hypotheses of Theorem \ref{teo1}, except for \eqref{generalBU}.
Let $B_r(x_0)$, $\delta$, and $\underline{\mathcal K}$ be chosen as above.
Suppose that at least one of the following conditions holds:
\begin{enumerate}[label=(\roman*)]
\item there exists $T>0$ such that
\begin{equation}\label{finiteBU}
\delta V(x_0,r)\underline{\mathcal K}(T)\ge J^{-1}(H(T));
\end{equation}
\item
\begin{equation}\label{hpBU}
\limsup_{T\to\infty}
\frac{\underline{\mathcal K}(T)}{J^{-1}(H(T))}=\infty.
\end{equation}
\end{enumerate}
Then the maximal nonnegative mild solution of \eqref{problema}
blows up in finite time. Under condition (i), $T_{\max}\le T$.
\end{corollary}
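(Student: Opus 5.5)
The plan is to reduce both alternatives to Theorem \ref{teo1}. The key step is a pointwise lower bound for $P_Tu_0$ on $B_r(x_0)$ in terms of $\underline{\mathcal K}(T)$. Once that bound is available, condition (i) gives \eqref{generalBU} at the same $T$. Condition (ii) will yield some $T$ at which (i) holds.

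\textbf{Step 1 (lower bound).} By positivity of the kernel, $u_0\ge\delta\mathbf 1_{B_r(x_0)}$ gives
\[
P_Tu_0(x)\ge\delta\int_{B_r(x_0)}K_T(x,y)\,d\mu(y)
\]
for $\mu$-a.e. $x$. By the definition of the essential infimum, $K_T(x,y)\ge\underline{\mathcal K}(T)$ for $\mu\otimes\mu$-a.e. $(x,y)\in B_r(x_0)\times B_r(x_0)$. Fubini's theorem then shows that for $\mu$-a.e. $x\in B_r(x_0)$ this holds for $\mu$-a.e. $y\in B_r(x_0)$. Hence
\[
P_Tu_0(x)\ge\delta V(x_0,r)\underline{\mathcal K}(T)
\]
on a subset of $B_r(x_0)$ of full measure. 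Since $\mu(B_r(x_0))>0$ by full support, taking the essential supremum gives
\[
\|P_Tu_0\|_{L^\infty(X,\mu)}\ge\delta V(x_0,r)\underline{\mathcal K}(T).
\]

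\textbf{Step 2 (case (i)).} Under \eqref{finiteBU}, Step 1 gives \eqref{generalBU} for the same $T$. Theorem \ref{teo1} then yields $T_{\max}\le T$ and blow-up.

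\textbf{Step 3 (case (ii)).} Here $J^{-1}(H(T))\in(0,\infty)$ because $H(T)>0$ for $T>0$, since $h>0$ on $(0,\infty)$. By \eqref{hpBU} there is a $T>0$ with
\[
\frac{\underline{\mathcal K}(T)}{J^{-1}(H(T))}\ge\frac{1}{\delta V(x_0,r)},
\]
where the right side is a finite positive constant. Rearranging gives exactly \eqref{finiteBU}, and Step 2 applies. The finite blow-up time is this $T$.

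The only delicate point is the measure-theoretic passage from the essential infimum over the product set to a bound for a.e. $x$. This is handled by Fubini's theorem together with full support of $\mu$. The remaining steps are direct.
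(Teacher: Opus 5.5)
Your proposal is correct and follows the paper's proof essentially step for step. The paper isolates your Step 1 as Lemma~\ref{lemma1}: it uses the essential infimum and Fubini on $B_r(x_0)$, then $V(x_0,r)>0$ to pass to the $L^\infty$ norm. It then derives \eqref{generalBU} from (i) and applies Theorem~\ref{teo1}, and reduces (ii) to (i) by choosing $T$ with ratio exceeding $1/(\delta V(x_0,r))$. Your remark that $J^{-1}(H(T))\in(0,\infty)$ because $H(T)>0$ makes explicit what the paper leaves implicit.
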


\begin{remark}\label{rem:comparison-LSZ}
We compare Theorem \ref{teo1} with the nonexistence criterion of
Lenz, Schmidt and Zimmermann
\cite[Theorem 4.1]{LSZ}.
In the autonomous case $h\equiv1$, their criterion requires the
existence of $T>0$ and a measurable set $D\subseteq X$, with
$0<\mu(D)<\infty$, such that
\[
\frac1{\mu(D)}\int_D(P_Tu_0)(x)\,d\mu(x)\ge J^{-1}(T).
\]
For bounded initial data, this condition implies \eqref{generalBU}, since
\[
\frac1{\mu(D)}\int_D(P_Tu_0)(x)\,d\mu(x)
\le\|P_Tu_0\|_{L^\infty(X,\mu)}.
\]
Conversely, a strict inequality
$\|P_Tu_0\|_{L^\infty(X,\mu)}>J^{-1}(T)$ implies the existence of
such a set $D$, whereas this need not follow from equality of the norm
with the threshold at that same time.

Their result is formulated for $L^p$-valued mild solutions,
$1\le p<\infty$, and strongly continuous sub-Markovian semigroups,
without requiring an integral kernel.
Our formulation uses a heat kernel and solutions bounded on compact
time intervals, without requiring the initial datum to belong to any
$L^p(X,\mu)$ with $p<\infty$.
It also allows a time-dependent coefficient $h$, replacing the
threshold $J^{-1}(T)$ by $J^{-1}(H(T))$.
Both criteria retain the dependence on the initial datum and do not
require asymptotic heat kernel assumptions.
\end{remark}

\subsection{Global existence}

Concerning global-in-time existence, we establish a criterion based on
a weighted heat kernel bound and a contraction argument.
For this result, neither convexity nor monotonicity of the nonlinearity
is required. Throughout this subsection, we assume
\eqref{mainassumptions} and retain the semigroup and heat kernel
framework fixed at the beginning of this section.
No separate growth assumption on $f$ at infinity is imposed beyond
the integral condition stated below.

Define
\begin{equation}\label{supKernel}
\overline{\mathcal K}(t)
:=
\operatorname*{ess\,sup}_{(x,y)\in X\times X}K_t(x,y),
\qquad t>0,
\end{equation}
where the essential supremum is taken with respect to $\mu\otimes\mu$.
We assume that $\overline{\mathcal K}$ is finite-valued and measurable
on $(0,\infty)$.

Fix $\gamma>0$ and a point $y_0\in X$ for which the chosen
heat kernel satisfies
\begin{equation}\label{columnsemigroup}
\int_X K_{t-s}(x,z)K_{s+\gamma}(z,y_0)\,d\mu(z)
=
K_{t+\gamma}(x,y_0)
\end{equation}
for every $0\le s<t$ and $\mu$-almost every $x\in X$.
We also require that
\begin{equation}\label{columnbound}
\|K_{t+\gamma}(\cdot,y_0)\|_{L^\infty(X,\mu)}
\le \overline{\mathcal K}(t+\gamma)
\qquad\text{for every }t\ge0.
\end{equation}
Spatial identities and inequalities are understood according to
the almost-everywhere convention fixed in Section \ref{heats}.

Let $\omega:[0,\infty)\to[1,\infty)$ satisfy
\begin{equation}\label{omega}
\omega\in C([0,\infty)),
\qquad
\omega(t)\ge1,
\qquad
\omega\text{ is nondecreasing}.
\end{equation}
For $M>0$, set
\[
R_M(t):=M\omega(t)\overline{\mathcal K}(t+\gamma),
\qquad t\ge0.
\]

Define
\begin{equation}\label{def:g}
g(\xi):=\frac{f(\xi)}{\xi},
\qquad \xi>0.
\end{equation}
Since $f$ is locally Lipschitz and $f(0)=0$, the function $g$
is nonnegative and continuous on $(0,\infty)$, and is bounded
on $(0,r]$ for every $r>0$.
No extension of $g$ to the origin is needed.

For $t\ge0$, define
\begin{equation}\label{thetaM}
\theta_M(t):=
\begin{cases}
\displaystyle\sup_{0<\xi\le R_M(t)}g(\xi),
& R_M(t)>0,\\[6pt]
0, & R_M(t)=0,
\end{cases}
\end{equation}
and
\begin{equation}\label{sigmaM}
\sigma_M(t):=
\begin{cases}
\displaystyle\sup_{0\le a<b\le R_M(t)}
\frac{|f(b)-f(a)|}{b-a},
& R_M(t)>0,\\[8pt]
0, & R_M(t)=0.
\end{cases}
\end{equation}
Thus, $\sigma_M(t)$ is the optimal Lipschitz constant of $f$
on $[0,R_M(t)]$, with the convention above when this interval
reduces to a single point.

Both functions are finite-valued and measurable.
Indeed, the corresponding suprema are nondecreasing functions
of the radius and are therefore Borel measurable.
Since $f(0)=0$,
\[
0\le\theta_M(t)\le\sigma_M(t)
\qquad\text{for every }t\ge0.
\]
Their dependence on $\gamma$ and $\omega$ is omitted from
the notation.

If $f\in C^1([0,\infty))$ and $R_M(t)>0$, then
\[
\sigma_M(t)
=
\max_{0\le\xi\le R_M(t)}|f'(\xi)|.
\]

For some $\delta\in(0,1)$, we consider the conditions
\begin{equation}\label{eq53}
\int_0^t \theta_M(s)h(s)\omega(s)\,ds
\le\delta\omega(t),
\qquad t>0,
\end{equation}
and
\begin{equation}\label{LM}
\int_0^t \sigma_M(s)h(s)\omega(s)\,ds
\le\delta\omega(t),
\qquad t>0.
\end{equation}
Condition \eqref{LM} implies \eqref{eq53}.
We retain both formulations to distinguish the estimates
used for invariance and contraction.

\begin{theorem}\label{teo2}
Assume \eqref{mainassumptions}, and let the heat semigroup
and its kernel be as fixed above.
Suppose that $\overline{\mathcal K}$, defined in
\eqref{supKernel}, is finite-valued and measurable on $(0,\infty)$.

Assume that there exist $\gamma>0$, $y_0\in X$, $M>0$,
$\delta\in(0,1)$, and a function $\omega$ satisfying
\eqref{omega}, such that \eqref{columnsemigroup},
\eqref{columnbound}, and \eqref{LM} hold.
Suppose also that
\begin{equation}\label{eq55}
0\le u_0(x)\le
\varepsilon K_\gamma(x,y_0)\omega(0)
\qquad
\text{for $\mu$-almost every }x\in X,
\end{equation}
where
\begin{equation}\label{eq54}
0<\varepsilon\le(1-\delta)M.
\end{equation}
Then problem \eqref{problema} admits a unique global
nonnegative mild solution in the sense of Definition \ref{defsol}.
Moreover,
\begin{equation}\label{globalbound}
0\le u(x,t)\le M K_{t+\gamma}(x,y_0)\omega(t)
\end{equation}
for every $t\ge0$ and $\mu$-almost every $x\in X$.
\end{theorem}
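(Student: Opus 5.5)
We prove existence by a fixed-point argument for the mild solution map in a weighted space built on the shifted heat kernel column, and then obtain uniqueness in the full class of Definition \ref{defsol} from the continuation result, Proposition \ref{prop:continuation}.

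Set $\Phi(x,t):=\omega(t)K_{t+\gamma}(x,y_0)$. Let $\mathcal X_M$ be the set of jointly measurable $u:X\times[0,\infty)\to[0,\infty)$ with $0\le u\le M\Phi$ almost everywhere for every $t\ge0$. We use the distance
\[
d(u,v):=\sup_{t\ge0}\,\operatorname*{ess\,sup}_{x:\,\Phi(x,t)>0}\frac{|u(x,t)-v(x,t)|}{\Phi(x,t)}\le 2M.
\]
Points where $\Phi=0$ force $u=v=0$. After identifying functions that agree almost everywhere at each time, $(\mathcal X_M,d)$ is a complete metric space, since $d$-Cauchy sequences converge pointwise almost everywhere and the bound $u\le M\Phi$ is closed. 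By \eqref{columnbound}, every $u\in\mathcal X_M$ satisfies $\|u(\cdot,t)\|_\infty\le R_M(t)$. In particular, $u$ is bounded on compact time intervals, because $\overline{\mathcal K}$ is finite and measurable. The sup over $[0,\tau]$ needs care here: one needs local boundedness of $\overline{\mathcal K}(\cdot+\gamma)$. We would get it from the monotonicity of $t\mapsto\overline{\mathcal K}(t)$, which follows from the semigroup identity \eqref{eq10} and sub-Markovianity: $K_{t+s}(x,z)\le \overline{\mathcal K}(t)\int K_s(y,z)\,d\mu(y)\le\overline{\mathcal K}(t)$.

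Define
\[
\mathcal T u(t):=P_tu_0+\int_0^t h(s)P_{t-s}f(u(s))\,ds .
\]

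\emph{Invariance.} By \eqref{eq55} and \eqref{columnsemigroup} with $s=0$, we get $P_tu_0\le\varepsilon\omega(0)K_{t+\gamma}(\cdot,y_0)\le\varepsilon\Phi(t)$. Since $0\le u(s)\le R_M(s)$, we have
\[
f(u(s))=g(u(s))u(s)\le\theta_M(s)\,M\omega(s)K_{s+\gamma}(\cdot,y_0).
\]
Applying \eqref{columnsemigroup} again gives
\[
P_{t-s}f(u(s))\le M\theta_M(s)\omega(s)K_{t+\gamma}(\cdot,y_0).
\]
Integrating against $h$ and using \eqref{eq53}, which follows from \eqref{LM}, yields
\[
\mathcal Tu(t)\le(\varepsilon+\delta M)\Phi(t)\le M\Phi(t)
\]
by \eqref{eq54}. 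Nonnegativity and joint measurability follow from $f\ge0$ and Fubini.

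\emph{Contraction.} For $u,v\in\mathcal X_M$, both $u(s)$ and $v(s)$ lie in $[0,R_M(s)]$. Hence
\[
|f(u(s))-f(v(s))|\le\sigma_M(s)|u(s)-v(s)|\le\sigma_M(s)\,d(u,v)\,\Phi(s).
\]
The same semigroup computation together with \eqref{LM} gives $|\mathcal Tu(t)-\mathcal Tv(t)|\le\delta\, d(u,v)\Phi(t)$, that is, $d(\mathcal Tu,\mathcal Tv)\le\delta\,d(u,v)$. Banach's fixed-point theorem then gives a fixed point $u\in\mathcal X_M$. This fixed point is a global mild solution satisfying \eqref{globalbound}, and $u(\cdot,0)=u_0$ because $\mathcal Tu(0)=u_0$.

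\emph{Uniqueness in the full class.} Let $v$ be any nonnegative mild solution on $[0,T)$. Fix $\tau<T$. Both $u$ and $v$ are bounded on $[0,\tau]$ by some constant $B$, and $f$ is Lipschitz on $[0,B]$ with constant $L_B$. Then
\[
\|u(t)-v(t)\|_\infty\le L_B\|h\|_{L^\infty(0,\tau)}\int_0^t\|u(s)-v(s)\|_\infty\,ds,
\]
using $L^\infty$-contractivity of $P_{t-s}$. Gronwall's lemma gives $u=v$ on $[0,\tau]$, and letting $\tau\uparrow T$ gives $u=v$ on $[0,T)$. Measurability of $s\mapsto\|u(s)-v(s)\|_\infty$ can be avoided by working with $\sup_{s\le t}$. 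This is the uniqueness part of Proposition \ref{prop:continuation}, and it shows that the maximal solution coincides with $u$ and is global.

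The main technical obstacle is the measure-theoretic bookkeeping rather than the estimates. Specifically:
\begin{itemize}
\item one must justify applying \eqref{columnsemigroup} inside the time integral for almost every $x$ simultaneously for almost every $s$, by a Fubini/Tonelli argument on the nonnegative integrand;
\item one must make sure the essential-sup distance is compatible with the identification of mild solutions.
\end{itemize}
We would handle both by proving the inequalities in integrated form, testing against nonnegative $\varphi\in L^1(X,\mu)$, and then deducing the pointwise almost-everywhere statements.
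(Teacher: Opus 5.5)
Your proposal is correct and takes essentially the paper's approach. The paper also uses a Banach contraction for the mild solution map in the weighted space controlled by $\omega(t)K_{t+\gamma}(\cdot,y_0)$, with invariance from $\theta_M$, \eqref{eq55}, \eqref{eq54} and the column identity \eqref{columnsemigroup}, contraction from $\sigma_M$ and \eqref{LM}, and uniqueness in the full class of Definition~\ref{defsol} via Proposition~\ref{prop:continuation}. The differences are only technical. You measure distances by a supremum over $t$ of spatial essential suprema instead of a joint essential supremum on $X\times(0,\infty)$, so your fixed point satisfies \eqref{solmild} at every $t$ directly, whereas the paper redefines the fixed point through $\Psi v$ to obtain this. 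You also get local boundedness from the monotonicity of $\overline{\mathcal K}$ rather than from \eqref{columnsemigroup} with $s=0$ plus $L^\infty$-contractivity.
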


\section{Blow-up of solutions: proofs}\label{blowup-proof}
\setcounter{equation}{0}

We first prove the local existence and continuation result stated in
Proposition \ref{prop:continuation}.

\begin{proof}[Proof of Proposition \ref{prop:continuation}]
The semigroup is positivity preserving and contractive on
$L^\infty(X,\mu)$.
Fix $M>\|u_0\|_{L^\infty(X,\mu)}$.
For sufficiently small $\eta>0$, the right-hand side of \eqref{solmild}
defines a contraction on the closed subset
\[
\{v\in L^\infty(X\times(0,\eta),\mu\otimes dt):0\le v\le M
\text{ almost everywhere}\},
\]
equipped with the $L^\infty(X\times(0,\eta),\mu\otimes dt)$ distance.
The integral is interpreted through the kernel, as in Definition
\ref{defsol}, and is independent of the chosen representative in this
space by Tonelli's theorem.

Indeed, invariance follows from
\[
\|u_0\|_{L^\infty(X,\mu)}
+\eta\sup_{0\le s\le\eta}h(s)\max_{0\le a\le M}f(a)\le M,
\]
while the contraction property follows by requiring
\[
\eta\sup_{0\le s\le\eta}h(s)L_M<1,
\]
where $L_M$ is a Lipschitz constant of $f$ on $[0,M]$.

Let $v$ denote the fixed point, and choose a jointly measurable
representative with values in $[0,M]$.
For every $t\in(0,\eta]$, define $u(\cdot,t)$ by the right-hand side
of \eqref{solmild}, with $v$ in the nonlinear term, and set $u(\cdot,0)=u_0$.
On the measurable set where this expression is not finite or lies
outside $[0,M]$, replace its value by $0$; each spatial section of
this set is null by the preceding estimates.
This gives a jointly measurable representative with
\[
\sup_{0\le t\le\eta}\|u(\cdot,t)\|_{L^\infty(X,\mu)}\le M.
\]
Moreover, $u=v$ almost everywhere on $X\times(0,\eta)$.
Tonelli's theorem therefore shows that replacing $v$ by $u$ in the
nonlinear integral leaves its value unchanged, for every fixed $t$,
for almost every $x$.
Thus $u$ satisfies \eqref{solmild} for every $t\in(0,\eta]$ in the
sense of Definition \ref{defsol}.

Uniqueness among locally bounded mild solutions follows from the same
estimates. Let $u_1,u_2$ be mild solutions on $[0,\tau]$, choose
$B\ge\sup_{0\le t\le\tau}\max_{i=1,2}\|u_i(t)\|_{L^\infty(X,\mu)}$,
let $L_B$ be a Lipschitz constant of $f$ on $[0,B]$, and set
$\bar h:=\max_{[0,\tau]}h$. Define
\[
t^*:=\sup\bigl\{t\in[0,\tau]:u_1(s)=u_2(s)\text{ a.e. for every }
s\in[0,t]\bigr\}.
\]
For $t^*\le t\le\tau$, the mild formulation, positivity, and
$L^\infty$-contractivity give
\[
\|u_1(t)-u_2(t)\|_{L^\infty(X,\mu)}
\le L_B\bar h\,(t-t^*)
\sup_{t^*\le s\le t}\|u_1(s)-u_2(s)\|_{L^\infty(X,\mu)}.
\]
Taking the supremum over $t\in[t^*,t_1]$ with
$0<t_1-t^*<(L_B\bar h)^{-1}$ shows that $u_1=u_2$ on $[0,t_1]$,
which contradicts the definition of $t^*$ unless $t^*=\tau$.

The semigroup property and the mild formulation imply the restart identity
\[
u(t)=P_{t-t_0}u(t_0)
+\int_{t_0}^t P_{t-s}\bigl(h(s)f(u(s))\bigr)\,ds
\]
whenever $0\le t_0<t$ belong to the interval of existence.
Here and below, identities of spatial sections hold almost everywhere,
and time integrals are interpreted through the kernel.

Conversely, let $u$ be a mild solution on $[0,t_0]$, and let $w$ be a
mild solution on $[t_0,t_1]$ of the problem restarted at $t_0$ with
initial datum $u(t_0)$, that is,
\[
w(t)=P_{t-t_0}u(t_0)
+\int_{t_0}^t P_{t-s}\bigl(h(s)f(w(s))\bigr)\,ds,
\qquad t_0<t\le t_1.
\]
Then the concatenation $\tilde u:=u$ on $[0,t_0]$ and $\tilde u:=w$
on $(t_0,t_1]$ is a mild solution on $[0,t_1]$.
Indeed, for $t_0<t\le t_1$, the semigroup property and Tonelli's
theorem give
\[
P_{t-t_0}u(t_0)
=P_tu_0+\int_0^{t_0}P_{t-s}\bigl(h(s)f(u(s))\bigr)\,ds,
\]
and inserting this identity into the formula for $w$ yields
\eqref{solmild} for $\tilde u$ at time $t$.
Consequently, the local construction can be repeated at any such $t_0$,
with $\|u_0\|_{L^\infty(X,\mu)}$ replaced by
$\|u(t_0)\|_{L^\infty(X,\mu)}$ and $\sup_{0\le s\le\eta}h(s)$ replaced by
$\sup_{t_0\le s\le t_0+\eta}h(s)$ in the invariance and contraction
conditions.
Since mild solutions agree on their common intervals of existence by
uniqueness, their union is the unique maximal solution.

Suppose that $T_{\max}<\infty$ and that there exists a sequence
$t_j\uparrow T_{\max}$ for which $\|u(t_j)\|_{L^\infty(X,\mu)}$
remains bounded.
Choose a common bound for these norms and then choose $M$ larger than
that bound. Since $h$ is bounded on $[0,T_{\max}+1]$, the same
invariance and contraction estimates provide a common positive
existence time $\eta\le1$ for the problems restarted at $t_j$.
By the concatenation property, gluing $u|_{[0,t_j]}$ with the solution
restarted at $t_j$ yields a mild solution on $[0,t_j+\eta]$.
For sufficiently large $j$, we have $t_j+\eta>T_{\max}$, contradicting
maximality. This proves the asserted blow-up alternative.
\end{proof}

We next establish a necessary condition for the existence of a bounded
solution up to a prescribed time $T$.

\begin{lemma}\label{lemma2}
Assume the hypotheses of Theorem \ref{teo1}, except for \eqref{generalBU}.
If $u$ is a nonnegative mild solution on $[0,T]$ satisfying
\[
\sup_{0\le t\le T}\|u(t)\|_{L^\infty(X,\mu)}<\infty,
\]
then
\begin{equation}\label{necessaryJ}
H(T)<J\bigl((P_Tu_0)(x)\bigr)
\end{equation}
for $\mu$-almost every $x\in X$, with $J(0)=\infty$.
Moreover,
\begin{equation}\label{eq47}
\|P_Tu_0\|_{L^\infty(X,\mu)}<J^{-1}(H(T)).
\end{equation}
\end{lemma}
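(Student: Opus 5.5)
The plan is to study the backward semigroup transform of the solution along a fixed spatial point. The mild formulation turns it into a scalar function of time, and convexity gives a differential inequality for it. For $0\le t\le T$ and $x\in X$, define
\[
F_x(t):=(P_Tu_0)(x)+\int_0^t h(s)\bigl(P_{T-s}f(u(\cdot,s))\bigr)(x)\,ds .
\]
By the semigroup property \eqref{eq10} and Tonelli's theorem, applied to \eqref{solmild} at time $t$, we have $F_x(t)=\bigl(P_{T-t}u(\cdot,t)\bigr)(x)$ for a.e. $x$. At $t=T$ this uses the mild identity at the endpoint, which is part of the definition of a solution on $[0,T]$, and gives $F_x(T)=u(x,T)$ for a.e. $x$. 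Set $B:=\sup_{0\le t\le T}\|u(t)\|_{L^\infty(X,\mu)}$. Then $0\le F_x(T)\le B$ for a.e. $x$, and $F_x(t)\le B$ for every $t$ by $L^\infty$-contractivity.

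For a.e. $x$, the map $t\mapsto F_x(t)$ is absolutely continuous and nondecreasing, with
\[
F_x'(t)=h(t)\bigl(P_{T-t}f(u(t))\bigr)(x)\quad\text{for a.e. }t.
\]
The key inequality is Jensen for a sub-probability kernel: since $f$ is convex with $f(0)=0$, the remark in Section \ref{heats} gives $f(P_\tau v)\le P_\tau f(v)$ for bounded $v\ge0$. The delicate point is to obtain this pointwise in $x$ simultaneously for a.e. $s$. I will apply the remark for each $s$ and then use Fubini on $X\times(0,T)$ to get a full-measure set of $x$ on which it holds for a.e. $s$. That set must also be one where $F_x(s)=P_{T-s}u(s)(x)$ holds for a.e. $s$. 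I expect this measure-theoretic bookkeeping to be the main obstacle, since the kernel is only measurable. The outcome is
\[
F_x'(t)\ge h(t)f\bigl(F_x(t)\bigr)\quad\text{for a.e. }t\in(0,T).
\]

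If $(P_Tu_0)(x)=0$, then \eqref{necessaryJ} holds trivially because $J(0)=\infty$. Otherwise $F_x(t)\ge F_x(0)>0$ for all $t$, so $J\circ F_x$ is absolutely continuous, since $J$ is $C^1$ on $(0,\infty)$ and $F_x$ takes values in a compact subinterval of it. Its derivative satisfies
\[
\frac{d}{dt}J(F_x)=-\frac{F_x'}{f(F_x)}\le -h .
\]
Integrating over $[0,T]$ and using that $J$ is decreasing with $F_x(T)\le B$ gives
\[
H(T)\le J\bigl((P_Tu_0)(x)\bigr)-J(F_x(T))\le J\bigl((P_Tu_0)(x)\bigr)-J(B).
\]
Since $J(B)>0$, this proves \eqref{necessaryJ} with a uniform gap.

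For \eqref{eq47}, the uniform gap gives $J\bigl((P_Tu_0)(x)\bigr)\ge H(T)+J(B)$ for a.e. $x$. Because $J^{-1}$ is strictly decreasing, this yields
\[
(P_Tu_0)(x)\le J^{-1}\bigl(H(T)+J(B)\bigr)\quad\text{for a.e. }x.
\]
Taking the essential supremum, $\|P_Tu_0\|_{L^\infty(X,\mu)}\le J^{-1}\bigl(H(T)+J(B)\bigr)<J^{-1}(H(T))$. This strict inequality holds even when the supremum is not attained. When $H(T)=0$, read $J^{-1}(0)=\infty$, so \eqref{eq47} is immediate; this case does not occur here since $h>0$.
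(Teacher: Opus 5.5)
Your proposal is correct and follows essentially the same route as the paper. Both arguments use the backward transform $P_{T-t}u(t)$, take its absolutely continuous representative from the mild formulation, apply the sub-Markovian Jensen inequality with $f(0)=0$, integrate $\frac{d}{dt}J(F_x)\le -h$, and obtain the strict inequality from the uniform gap $J(B)>0$, without needing the essential supremum to be attained. The only minor difference is that you integrate up to $t=T$ using the mild identity at the endpoint, so that $F_x(T)=u(x,T)\le B$, whereas the paper stays on $[0,T)$ with the bound $\Phi_x\le M$, $M>1+B$, and lets $t\uparrow T$.
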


\begin{proof}
For $0\le t<T$, consider
\begin{equation}\label{eq42}
\Phi_x^T(t)\equiv\Phi_x(t):=(P_{T-t}u(t))(x).
\end{equation}
In particular,
\begin{equation}\label{eq43}
\Phi_x(0)=(P_Tu_0)(x)
=\int_X K_T(x,z)u_0(z)\,d\mu(z).
\end{equation}
Initially, these expressions are understood almost everywhere in $x$
for each fixed $t$.
Applying $P_{T-t}$ to \eqref{solmild}, Tonelli's theorem and the
semigroup property give, for every $0<t<T$,
\begin{equation}\label{eq411}
\begin{aligned}
\Phi_x(t)
&=(P_Tu_0)(x)
+\int_0^t h(s)\bigl(P_{T-s}(f(u(s)))\bigr)(x)\,ds\\
&=\Phi_x(0)
+\int_0^t\int_X K_{T-s}(x,y)h(s)f(u(y,s))\,d\mu(y)\,ds
\end{aligned}
\end{equation}
for almost every $x$.

Choose
\[
M>1+\sup_{0\le s\le T}\|u(s)\|_{L^\infty(X,\mu)}.
\]
Positivity and $L^\infty$-contractivity give
$0\le\Phi_x(t)\le M$ for almost every $(x,t)$.
The integrand in \eqref{eq411} is bounded by $h(s)f(M)$ for almost
every $(x,s)$.
By Fubini's theorem, for almost every $x$ the right-hand side of
\eqref{eq411} defines an absolutely continuous function on $[0,T]$.
We henceforth use this representative of $\Phi_x$.
For almost every $x$, it agrees with \eqref{eq42} for almost every $t$;
for each fixed $t<T$, the two spatial sections also agree almost everywhere.
Continuity of the representative extends its bounds to every time:
\[
0\le\Phi_x(t)\le M\qquad\text{for every }0\le t\le T.
\]
Moreover,
\[
\Phi_x'(t)=h(t)\bigl(P_{T-t}(f(u(t)))\bigr)(x)
\]
for almost every $t\in(0,T)$ and almost every $x$.

By the sub-Markovian Jensen inequality recalled in Section \ref{heats},
convexity and $f(0)=0$ imply
\begin{equation}\label{eq410}
\bigl(P_{T-t}(f(u(t)))\bigr)(x)
\ge f\bigl((P_{T-t}u(t))(x)\bigr)=f(\Phi_x(t))
\end{equation}
for almost every $(x,t)$.
Indeed, the measure $K_{T-t}(x,y)\,d\mu(y)$ has mass at most one;
assigning the missing mass to the value $0$ gives Jensen's inequality
with the additional term $(1-P_{T-t}1)(x)f(0)=0$.
It follows that
\begin{equation}\label{eq412}
\Phi_x'(t)\ge h(t)f(\Phi_x(t))
\end{equation}
for almost every $(x,t)$.
By Fubini's theorem, we can choose a single set $X_T\subseteq X$
of full measure on which the absolute continuity and bounds above hold,
and \eqref{eq412} holds for almost every $t\in(0,T)$.
In particular, $\Phi_x$ is nondecreasing for every $x\in X_T$.

Fix $x\in X_T$ and write
\[
a:=\Phi_x(0)=(P_Tu_0)(x).
\]
Assume first that $a>0$.
Since $a\le\Phi_x(t)\le M$ and $J$ is continuously differentiable
on $(0,\infty)$, the function
\begin{equation}\label{eq422b}
F(t):=J(\Phi_x(t))
\end{equation}
is absolutely continuous on $[0,T]$.
Using \eqref{eq412}, we obtain
\begin{equation}\label{eq417}
F'(t)=-\frac{\Phi_x'(t)}{f(\Phi_x(t))}\le-h(t)
\end{equation}
for almost every $t\in(0,T)$.
Integration yields
\[
J(\Phi_x(t))\le J(a)-H(t),\qquad 0\le t<T.
\]
Since $\Phi_x(t)\le M$ and $J$ is decreasing,
\[
J(\Phi_x(t))\ge J(M)>0.
\]
Consequently, letting $t\uparrow T$ gives
\begin{equation}\label{uniform-J-gap}
H(T)+J(M)\le J(a).
\end{equation}
This proves \eqref{necessaryJ} when $a>0$.
When $a=0$, that inequality follows from $J(0)=\infty$.

The constant $M$ is independent of $x$.
By \eqref{uniform-J-gap} and the monotonicity of $J$,
\[
(P_Tu_0)(x)\le J^{-1}\bigl(H(T)+J(M)\bigr)
\]
for almost every $x$ with $(P_Tu_0)(x)>0$.
The same inequality is immediate where $(P_Tu_0)(x)=0$.
Taking the essential supremum gives
\[
\|P_Tu_0\|_{L^\infty(X,\mu)}
\le J^{-1}\bigl(H(T)+J(M)\bigr)<J^{-1}(H(T)),
\]
because $J(M)>0$ and $J^{-1}$ is strictly decreasing.
This proves \eqref{eq47}, without requiring the essential supremum
to be attained.
\end{proof}

\begin{proof}[Proof of Theorem \ref{teo1}]
Let $u$ be the maximal solution provided by Proposition
\ref{prop:continuation}, and let $T>0$ satisfy \eqref{generalBU}.
Suppose that $T_{\max}>T$.
Then $u$ is bounded on $[0,T]$, and Lemma \ref{lemma2} gives
\[
\|P_Tu_0\|_{L^\infty(X,\mu)}<J^{-1}(H(T)),
\]
contradicting \eqref{generalBU}, including its equality case.
Therefore $T_{\max}\le T<\infty$.
The blow-up alternative in Proposition \ref{prop:continuation} yields
\[
\lim_{t\uparrow T_{\max}}\|u(\cdot,t)\|_{L^\infty(X,\mu)}=\infty.
\]
The equivalence between \eqref{generalBU} and \eqref{generalBU-J}
follows from the strict monotonicity of $J$ when
$\|P_Tu_0\|_{L^\infty(X,\mu)}>0$.
When this norm is zero, both conditions fail, since
$J^{-1}(H(T))>0$ and $J(0)=\infty$.
\end{proof}

We finally record the lower bound on the linear evolution used in the
heat kernel criteria.

\begin{lemma}\label{lemma1}
Let $u_0$ satisfy \eqref{mainassumptions}, and let $x_0$, $r$, and
$\delta$ be chosen as in Corollary \ref{corteo1}.
Then, for every $T>0$,
\begin{equation}\label{eq40}
(P_Tu_0)(x)\ge\delta\,\underline{\mathcal K}(T)V(x_0,r)
\end{equation}
for $\mu$-almost every $x\in B_r(x_0)$.
\end{lemma}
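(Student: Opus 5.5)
The plan is to argue directly from the kernel representation of $P_Tu_0$, using positivity of the kernel and the lower bound $u_0\ge\delta$ on the ball. Fix $T>0$. By Definition \ref{heatsemigroup}, for $\mu$-almost every $x\in X$,
\[
(P_Tu_0)(x)=\int_X K_T(x,y)u_0(y)\,d\mu(y)\ge\int_{B_r(x_0)}K_T(x,y)u_0(y)\,d\mu(y)\ge\delta\int_{B_r(x_0)}K_T(x,y)\,d\mu(y).
\]
Here we drop the integral over the complement of $B_r(x_0)$, since $K_T\ge0$ and $u_0\ge0$, and then use $u_0\ge\delta$ pointwise on $B_r(x_0)$, as guaranteed by the choice of $x_0$, $r$, $\delta$ in Corollary \ref{corteo1}.

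The remaining task is to bound $K_T(x,y)$ from below by $\underline{\mathcal K}(T)$ on $B_r(x_0)\times B_r(x_0)$. The definition \eqref{infKernel} is an essential infimum with respect to $\mu\otimes\mu$. It therefore gives $K_T(x,y)\ge\underline{\mathcal K}(T)$ only outside a $\mu\otimes\mu$-null set $N\subseteq B_r(x_0)\times B_r(x_0)$.

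This is the one point that needs care. I would handle it with Tonelli's theorem. Since $\mathbf 1_N$ is measurable with zero double integral, for $\mu$-almost every $x\in B_r(x_0)$ the section $N_x=\{y:(x,y)\in N\}$ is $\mu$-null. For such $x$ the bound $K_T(x,y)\ge\underline{\mathcal K}(T)$ holds for $\mu$-almost every $y\in B_r(x_0)$. Integrating over $y$ then gives
\[
\int_{B_r(x_0)}K_T(x,y)\,d\mu(y)\ge\underline{\mathcal K}(T)\,\mu(B_r(x_0))=\underline{\mathcal K}(T)V(x_0,r).
\]
This uses $0<V(x_0,r)<\infty$, which follows from Definition \ref{MMS}.

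To finish, we intersect two full-measure subsets of $B_r(x_0)$: the set where the kernel representation of $P_Tu_0$ holds, and the set of good sections from the previous step. On this intersection we obtain \eqref{eq40}. The case $\underline{\mathcal K}(T)=0$ is trivial, because the left side is nonnegative by positivity, so no strict positivity assumption is needed.
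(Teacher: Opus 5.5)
Your proposal is correct and follows the paper's proof essentially line by line. You drop the integral over the complement of $B_r(x_0)$, use $u_0\ge\delta$ there, and pass from the $\mu\otimes\mu$-essential infimum to an almost-everywhere bound on $y$-sections via Tonelli; your explicit handling of the null set $N$ and its sections simply spells out what the paper compresses into ``the definition of the essential infimum and Fubini's theorem.''
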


\begin{proof}
By the definition of the essential infimum in \eqref{infKernel} and
Fubini's theorem, for almost every $x\in B_r(x_0)$,
\[
\begin{aligned}
(P_Tu_0)(x)
&=\int_X K_T(x,y)u_0(y)\,d\mu(y)\\
&\ge\int_{B_r(x_0)}K_T(x,y)u_0(y)\,d\mu(y)\\
&\ge\delta\int_{B_r(x_0)}K_T(x,y),d\mu(y)\\
&\ge\delta\,\underline{\mathcal K}(T)V(x_0,r).
\end{aligned}
\]
\end{proof}

\begin{proof}[Proof of Corollary \ref{corteo1}]
Since $V(x_0,r)>0$, Lemma \ref{lemma1} gives
\[
\|P_Tu_0\|_{L^\infty(X,\mu)}
\ge\delta V(x_0,r)\underline{\mathcal K}(T).
\]
Thus \eqref{finiteBU} implies \eqref{generalBU}, and Theorem \ref{teo1}
applies.
If \eqref{hpBU} holds, there exists $T>0$ such that
\[
\frac{\underline{\mathcal K}(T)}{J^{-1}(H(T))}
>\frac1{\delta V(x_0,r)}.
\]
Therefore condition (i) holds, and the conclusion follows from the
first part.
\end{proof}

\section{Global existence: proofs}\label{globalex-proof}
\setcounter{equation}{0}

We prove Theorem \ref{teo2} by applying the Banach contraction
principle in a weighted space of measurable functions.

\begin{definition}\label{funcspace}
Fix $M>0$, $\gamma>0$, $y_0\in X$, and a function $\omega$
satisfying \eqref{omega}. Set
\[
q(x,t):=K_{t+\gamma}(x,y_0)\omega(t),
\qquad x\in X,\quad t>0.
\]
We denote by
$\mathcal M(M,\gamma,y_0,\omega)\equiv\mathcal M$
the set of equivalence classes of measurable functions
satisfying
\begin{equation}\label{funcspacedef}
\mathcal M:=
\left\{
u:X\times(0,\infty)\to[0,\infty):
0\le u(x,t)\le Mq(x,t)
\text{ for almost every }(x,t)
\right\}.
\end{equation}
Functions are identified when they agree
$\mu\otimes dt$-almost everywhere.
For $u,v\in\mathcal M$, define
\begin{equation}\label{eq52}
\rho(u,v):=
\operatorname*{ess\,sup}_{(x,t)\in X\times(0,\infty)}
\frac{|u(x,t)-v(x,t)|}{q(x,t)}.
\end{equation}
The quotient is defined to be zero on $\{q=0\}$,
where every member of $\mathcal M$ vanishes almost everywhere.
\end{definition}

The metric space $(\mathcal M,\rho)$ is nonempty and complete.
Indeed, $0\in\mathcal M$, and the map
\[
u\longmapsto
\begin{cases}
u/q, & q>0,\\
0, & q=0,
\end{cases}
\]
identifies $\mathcal M$ isometrically with the closed subset
of $L^\infty(X\times(0,\infty),\mu\otimes dt)$ consisting
of functions taking values in $[0,M]$ and vanishing almost
everywhere on $\{q=0\}$.

Every member of $\mathcal M$ is essentially bounded on
$X\times(0,T)$ for each $T>0$.
In fact, \eqref{columnsemigroup} with $s=0$ and
$L^\infty$-contractivity give
\[
\begin{aligned}
\|K_{t+\gamma}(\cdot,y_0)\|_{L^\infty(X,\mu)}
&=
\|P_tK_\gamma(\cdot,y_0)\|_{L^\infty(X,\mu)}\\
&\le
\|K_\gamma(\cdot,y_0)\|_{L^\infty(X,\mu)}\\
&\le\overline{\mathcal K}(\gamma),
\end{aligned}
\]
for every $t>0$.
Consequently,
\[
\|u\|_{L^\infty(X\times(0,T),\mu\otimes dt)}
\le M\omega(T)\overline{\mathcal K}(\gamma).
\]

For $u\in\mathcal M$, define
\begin{equation}\label{psi}
(\Psi u)(x,t)
:=
\int_X K_t(x,z)u_0(z)\,d\mu(z)
+
\int_0^t\int_X
K_{t-s}(x,z)h(s)f(u(z,s))\,d\mu(z)\,ds,
\qquad t>0.
\end{equation}
The values at the endpoints of the time integral are immaterial.
Joint measurability of the kernel and Tonelli's theorem ensure
measurability of this expression.
Moreover, changing $u$ on a set of product measure zero does
not change $(\Psi u)(\cdot,t)$ almost everywhere for any
$t>0$, so $\Psi$ is well defined on equivalence classes.

We first show that $\Psi$ maps $\mathcal M$ into itself.

\begin{lemma}\label{lemma-psi}
Assume \eqref{columnsemigroup} and \eqref{columnbound}.
Suppose that \eqref{eq55}, \eqref{eq54}, and \eqref{eq53}
hold, with the same parameters and weight as in
Definition \ref{funcspace}. Then
\[
\Psi(\mathcal M)\subseteq\mathcal M.
\]
\end{lemma}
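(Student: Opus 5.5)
To prove Lemma \ref{lemma-psi}, we fix $u\in\mathcal M$ and bound the two terms of \eqref{psi} separately by multiples of $q(x,t)=K_{t+\gamma}(x,y_0)\omega(t)$. Nonnegativity of $\Psi u$ is immediate, since $K\ge0$, $u_0\ge0$, $h\ge0$ and $f\ge0$. The whole argument rests on the column semigroup identity \eqref{columnsemigroup}: it lets us push the kernel $K_{t-s}$ through a bound of the form $K_{s+\gamma}(\cdot,y_0)$ and land exactly on $K_{t+\gamma}(x,y_0)$.

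\emph{Linear term.} By \eqref{eq55} and positivity of the kernel,
\[
\int_X K_t(x,z)u_0(z)\,d\mu(z)\le\varepsilon\omega(0)\int_X K_t(x,z)K_\gamma(z,y_0)\,d\mu(z)=\varepsilon\omega(0)K_{t+\gamma}(x,y_0).
\]
The last equality is \eqref{columnsemigroup} with $s=0$. Since $\omega$ is nondecreasing, this is at most $\varepsilon q(x,t)$.

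\emph{Nonlinear term.} For almost every $(z,s)$ we have $0\le u(z,s)\le Mq(z,s)$. By \eqref{columnbound}, $Mq(z,s)\le M\omega(s)\overline{\mathcal K}(s+\gamma)=R_M(s)$ for a.e. $z$. We then write $f(u)=g(u)\,u$ where $u>0$ and use $f(0)=0$ where $u=0$. Since $0<u(z,s)\le R_M(s)$ on the first set, the definition \eqref{thetaM} gives
\[
f(u(z,s))\le\theta_M(s)\,u(z,s)\le M\theta_M(s)\omega(s)K_{s+\gamma}(z,y_0).
\]
If $R_M(s)=0$, then $u(\cdot,s)=0$ a.e. and the bound is trivial. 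Using Tonelli's theorem, \eqref{columnsemigroup} for each $s<t$, and then \eqref{eq53}, we get
\[
\int_0^t\!\!\int_X K_{t-s}(x,z)h(s)f(u(z,s))\,d\mu(z)\,ds\le M K_{t+\gamma}(x,y_0)\int_0^t\theta_M(s)h(s)\omega(s)\,ds\le \delta M q(x,t).
\]

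Adding the two bounds gives $\Psi u\le(\varepsilon+\delta M)q\le Mq$, by \eqref{eq54}.

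The main obstacle is the bookkeeping of null sets. The bound on $u$ holds only $\mu\otimes dt$-a.e., and \eqref{columnsemigroup} holds for each $s$ only for a.e. $x$. Tonelli handles the first issue: for a.e. $s$ the $z$-section bound holds a.e., so the inner integral inequality holds for a.e. $s$, which suffices inside the time integral. For the second, the exceptional $x$-set depends on $s$. To avoid this, I would first integrate in $s$ the pointwise bound on the integrand and apply Fubini in $(z,s)$ for fixed $x$ only after establishing the identity as a function of $(x,s)$. Concretely, the set of $(x,s)$ where \eqref{columnsemigroup} fails is product-null, so for a.e. $x$ it fails only on an $s$-null set, which does not affect the time integral. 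This yields the inequality for a.e. $x$ for each $t$, hence a.e. in $(x,t)$, as required by \eqref{funcspacedef}. Measurability of $\theta_M$, already noted in the paper, makes the time integral meaningful.
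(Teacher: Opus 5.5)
Your proposal is correct and follows the paper's proof essentially step for step. Both arguments bound $f(u)\le\theta_M u\le M\theta_M\omega K_{s+\gamma}(\cdot,y_0)$ via \eqref{columnbound}, apply \eqref{columnsemigroup} to both terms, and conclude with \eqref{eq53}, the monotonicity of $\omega$, and \eqref{eq54}. Your null-set bookkeeping just spells out what the paper compresses into ``joint measurability and Fubini's theorem.''
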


\begin{proof}
Let $u\in\mathcal M$. By \eqref{columnbound},
\[
0\le u(z,s)
\le M\omega(s)K_{s+\gamma}(z,y_0)
\le R_M(s)
\]
for almost every $(z,s)$.
The definition of $\theta_M$, together with $f(0)=0$, gives
\[
0\le f(u(z,s))
\le\theta_M(s)u(z,s)
\le
M\theta_M(s)\omega(s)K_{s+\gamma}(z,y_0).
\]

Using \eqref{eq55} in \eqref{psi}, we obtain, for every $t>0$
and $\mu$-almost every $x\in X$,
\begin{equation}\label{eq56}
\begin{aligned}
0\le(\Psi u)(x,t)
&\le
\varepsilon\omega(0)
\int_X K_t(x,z)K_\gamma(z,y_0)\,d\mu(z)\\
&\quad+
M\int_0^t h(s)\theta_M(s)\omega(s)
\int_X K_{t-s}(x,z)K_{s+\gamma}(z,y_0)\,d\mu(z)\,ds.
\end{aligned}
\end{equation}
For each fixed $t>0$, joint measurability and Fubini's theorem
allow us to apply \eqref{columnsemigroup} inside the time
integral. Therefore,
\[
\begin{aligned}
(\Psi u)(x,t)
&\le
K_{t+\gamma}(x,y_0)
\left[
\varepsilon\omega(0)
+
M\int_0^t h(s)\theta_M(s)\omega(s)\,ds
\right]\\
&\le
K_{t+\gamma}(x,y_0)
\left[
\varepsilon\omega(0)+\delta M\omega(t)
\right]\\
&\le
(\varepsilon+\delta M)
K_{t+\gamma}(x,y_0)\omega(t),
\end{aligned}
\]
where the last inequality follows from the monotonicity of
$\omega$.
Since \eqref{eq54} implies $\varepsilon+\delta M\le M$,
we conclude that
\[
0\le(\Psi u)(x,t)\le Mq(x,t)
\]
for every $t>0$ and $\mu$-almost every $x\in X$.
In particular, $\Psi u\in\mathcal M$.
\end{proof}

We next prove that $\Psi$ is a contraction on $(\mathcal M,\rho)$.

\begin{proposition}\label{prop-contraction}
Assume \eqref{columnsemigroup}, \eqref{columnbound},
\eqref{eq55}, \eqref{eq54}, and \eqref{LM}, with the same
parameters and weight as in Definition \ref{funcspace}.
Then $\Psi$ maps $\mathcal M$ into itself and
\begin{equation}\label{eq57}
\rho(\Psi u,\Psi v)\le\delta\,\rho(u,v)
\qquad
\text{for every }u,v\in\mathcal M.
\end{equation}
\end{proposition}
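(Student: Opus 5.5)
The invariance part is already available: since \eqref{LM} implies \eqref{eq53} (because $\theta_M\le\sigma_M$), Lemma \ref{lemma-psi} gives $\Psi(\mathcal M)\subseteq\mathcal M$. It therefore remains to prove the contraction estimate \eqref{eq57}. I will follow the same scheme as in the invariance proof, with $f(u)$ replaced by the difference $f(u)-f(v)$ and $\theta_M$ replaced by the local Lipschitz constant $\sigma_M$.

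Fix $u,v\in\mathcal M$ and put $\rho:=\rho(u,v)$. The first step is a pointwise bound on the nonlinear difference. For almost every $(z,s)$, both $u(z,s)$ and $v(z,s)$ lie in $[0,R_M(s)]$ by \eqref{columnbound}. By the definition \eqref{sigmaM} of $\sigma_M$ and the definition \eqref{eq52} of $\rho$,
\[
|f(u(z,s))-f(v(z,s))|\le\sigma_M(s)|u(z,s)-v(z,s)|\le\sigma_M(s)\,\rho\,\omega(s)K_{s+\gamma}(z,y_0).
\]
On $\{q=0\}$ both functions vanish almost everywhere, and the case $R_M(s)=0$ is covered by the same observation. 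Fubini's theorem then gives a single null set in $(z,s)$ outside which this bound holds.

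Next, the terms $P_tu_0$ cancel in $\Psi u-\Psi v$. For fixed $t>0$ and almost every $x$, positivity of the kernel gives
\[
|(\Psi u)(x,t)-(\Psi v)(x,t)|\le\rho\int_0^t h(s)\sigma_M(s)\omega(s)\int_X K_{t-s}(x,z)K_{s+\gamma}(z,y_0)\,d\mu(z)\,ds.
\]
I then apply \eqref{columnsemigroup} inside the time integral, again justified by Fubini's theorem exactly as in Lemma \ref{lemma-psi}. This turns the inner integral into $K_{t+\gamma}(x,y_0)$, and \eqref{LM} then bounds the right-hand side by $\delta\rho\,\omega(t)K_{t+\gamma}(x,y_0)=\delta\rho\,q(x,t)$.

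Dividing by $q$ on $\{q>0\}$ and taking the essential supremum gives \eqref{eq57}. The only delicate point is measure-theoretic. The estimate is obtained for each fixed $t$ and almost every $x$, whereas $\rho$ is an essential supremum over $X\times(0,\infty)$. Since $\Psi u-\Psi v$ is jointly measurable, Tonelli's theorem shows that the exceptional set has product measure zero, so the essential supremum bound follows. I expect this bookkeeping, together with the use of \eqref{columnsemigroup} under the time integral, to be the only real obstacle; the rest is the computation already carried out in the proof of Lemma \ref{lemma-psi}.
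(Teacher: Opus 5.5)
Your proposal is correct and follows the paper's proof essentially step by step. You obtain invariance from $\theta_M\le\sigma_M$ and Lemma \ref{lemma-psi}, then chain the Lipschitz bound with $\sigma_M$, the pointwise bound $|u-v|\le\rho(u,v)\,q$, the column identity \eqref{columnsemigroup}, and \eqref{LM}; your Tonelli remark showing the exceptional set is product-null just makes explicit a measure-theoretic step the paper leaves implicit.
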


\begin{proof}
Since $\theta_M\le\sigma_M$, condition \eqref{LM} implies
\eqref{eq53}. Thus Lemma \ref{lemma-psi} gives
$\Psi(\mathcal M)\subseteq\mathcal M$.

Let $u,v\in\mathcal M$.
For every $t>0$ and $\mu$-almost every $x\in X$,
\begin{equation}\label{eq58}
|(\Psi u-\Psi v)(x,t)|
\le
\int_0^t\int_X
K_{t-s}(x,z)h(s)
|f(u(z,s))-f(v(z,s))|\,d\mu(z)\,ds.
\end{equation}
Since $u(z,s),v(z,s)\in[0,R_M(s)]$ almost everywhere,
\[
|f(u(z,s))-f(v(z,s))|
\le\sigma_M(s)|u(z,s)-v(z,s)|.
\]
Moreover, the definition of $\rho$ gives
\[
|u(z,s)-v(z,s)|
\le
\rho(u,v)K_{s+\gamma}(z,y_0)\omega(s)
\]
for almost every $(z,s)$.

Substituting these estimates into \eqref{eq58} and using
\eqref{columnsemigroup}, we obtain
\[
\begin{aligned}
|(\Psi u-\Psi v)(x,t)|
&\le
\rho(u,v)\int_0^t h(s)\sigma_M(s)\omega(s)
\int_X K_{t-s}(x,z)K_{s+\gamma}(z,y_0)\,d\mu(z)\,ds\\
&=
\rho(u,v)K_{t+\gamma}(x,y_0)
\int_0^t h(s)\sigma_M(s)\omega(s)\,ds\\
&\le
\delta\,\rho(u,v)K_{t+\gamma}(x,y_0)\omega(t)\\
&=
\delta\,\rho(u,v)q(x,t).
\end{aligned}
\]
On $\{q>0\}$, divide by $q$ and take the essential supremum.
On $\{q=0\}$, both $\Psi u$ and $\Psi v$ vanish almost
everywhere. Hence
\[
\rho(\Psi u,\Psi v)\le\delta\,\rho(u,v).
\]
\end{proof}

\begin{proof}[Proof of Theorem \ref{teo2}]
By Proposition \ref{prop-contraction}, $\Psi$ is a contraction
from the nonempty complete metric space $(\mathcal M,\rho)$
into itself.
The Banach contraction principle therefore yields a unique
fixed point in $\mathcal M$.

Let $v$ be a nonnegative measurable representative of this
fixed point. For every $t>0$, define
\[
u(\cdot,t):=(\Psi v)(\cdot,t).
\]
On the measurable set where this expression is not finite,
replace its value by $0$.
For each fixed $t>0$, the corresponding spatial section
is null by the estimates in Lemma \ref{lemma-psi}.
Thus $u$ is a finite-valued, nonnegative jointly measurable
function and
\[
u=v
\qquad\text{almost everywhere in }X\times(0,\infty).
\]

Replacing $v$ by $u$ does not alter the integral term in
\eqref{psi}. Therefore, for every $t>0$ and $\mu$-almost every
$x\in X$,
\[
u(x,t)
=
\int_X K_t(x,y)u_0(y)\,d\mu(y)
+
\int_0^t\int_X
K_{t-s}(x,y)h(s)f(u(y,s))\,d\mu(y)\,ds.
\]
Set $u(\cdot,0)=u_0$.

The estimates in Lemma \ref{lemma-psi} show that
\eqref{globalbound} holds for every $t>0$.
At $t=0$, it follows from \eqref{eq55} and
$\varepsilon\le M$.
Moreover, for every $T>0$,
\[
\sup_{0\le t\le T}
\|u(\cdot,t)\|_{L^\infty(X,\mu)}
\le M\omega(T)\overline{\mathcal K}(\gamma)<\infty.
\]
Hence $u$ is a global nonnegative mild solution in the sense
of Definition \ref{defsol}.

Uniqueness in this class follows from Proposition
\ref{prop:continuation}, applied on each finite time interval.
In particular, uniqueness holds among all nonnegative mild
solutions, without requiring them a priori to belong to
the weighted space $\mathcal M$.
\end{proof}

\begin{remark}\label{rem05}
Let $\delta_0>0$, and define
\[
k(t):=\|K_{t+\gamma}(\cdot,y_0)\|_{L^\infty(X,\mu)},
\qquad t\ge0.
\]
Assume, in addition, that $k$ is positive and continuous
on $[0,\infty)$.
The column semigroup identities and
$L^\infty$-contractivity imply
\[
k(t+s)\le k(t)
\qquad\text{for every }s,t\ge0.
\]
Therefore,
\[
\omega(t):=\frac{k(0)}{k(t)},
\qquad t\ge0,
\]
satisfies \eqref{omega}, with $\omega(0)=1$.

Choose $M>0$ such that
\[
Mk(0)\le\delta_0.
\]
Every $u\in\mathcal M$ then satisfies
\[
0\le u(x,t)
\le
M K_{t+\gamma}(x,y_0)\frac{k(0)}{k(t)}
\le Mk(0)\le\delta_0
\]
for almost every $(x,t)$.

For this choice of weight and $M$, the proofs of
Lemma \ref{lemma-psi} and Proposition
\ref{prop-contraction} remain valid with $\theta_M$ and
$\sigma_M$ replaced, respectively, by
\[
\widetilde\theta
:=
\sup_{0<\xi\le\delta_0}\frac{f(\xi)}{\xi},
\qquad
\widetilde\sigma
:=
\sup_{0\le a<b\le\delta_0}
\frac{|f(b)-f(a)|}{b-a}.
\]
Both constants are finite, and
\[
0\le\widetilde\theta\le\widetilde\sigma.
\]
Consequently, in place of \eqref{LM}, it is sufficient
to assume that
\[
\widetilde\sigma
\int_0^t h(s)\omega(s)\,ds
\le\delta\omega(t)
\qquad\text{for every }t>0
\]
for some $\delta\in(0,1)$, together with \eqref{eq55} and
\[
0<\varepsilon\le(1-\delta)M.
\]
Equivalently, the integral condition is
\[
\widetilde\sigma\,k(t)
\int_0^t\frac{h(s)}{k(s)}\,ds
\le\delta
\qquad\text{for every }t>0.
\]

This is a variant obtained by repeating the contraction
argument with the sharper bound $u\le\delta_0$.
The new integral condition need not imply \eqref{LM},
because the radius $R_M(t)$ defined using
$\overline{\mathcal K}(t+\gamma)$ may exceed $\delta_0$.

Under these assumptions, the same construction produces
a global nonnegative mild solution satisfying
\begin{equation}\label{eqdelta0}
0\le u(x,t)\le\delta_0
\end{equation}
for every $t\ge0$ and $\mu$-almost every $x\in X$.
Uniqueness follows again from Proposition
\ref{prop:continuation}.

If $f\in C^1([0,\delta_0])$, then
\[
\widetilde\sigma
=
\max_{0\le\xi\le\delta_0}|f'(\xi)|.
\]
Thus, this global existence criterion depends only on
the behavior of $f$ on $[0,\delta_0]$.
No estimates on $f$ outside this interval enter
the construction.
\end{remark}

\section{Model cases: power nonlinearities }\label{model}
\setcounter{equation}{0}

In this section, we apply the general results to power
nonlinearities, presenting blow-up before global existence.
We then specialize the time coefficient and illustrate
the resulting criteria on different underlying spaces.
Other nonlinearities are treated in
Section \ref{model-beyond-powers}.
Throughout this section, we assume \eqref{mainassumptions}
and retain the standing assumptions on the Dirichlet form
and the heat kernel.
In particular, the heat semigroup is symmetric and
sub-Markovian; conservativity is required only where
explicitly assumed.
The power nonlinearity is fixed at the beginning of
Subsection \ref{subsec:power-criteria}, where we verify
the assumptions needed for both blow-up and global existence.
We use the local existence and continuation result in
Proposition \ref{prop:continuation}.
For global existence, we also retain the kernel assumptions
and conventions of Theorem \ref{teo2}.
Thus $\overline{\mathcal K}$, defined in \eqref{supKernel},
is finite-valued and measurable on $(0,\infty)$, and
the pole $y_0$ and shift $\gamma>0$ are chosen so that
\eqref{columnsemigroup} and \eqref{columnbound} hold.
Smallness of the initial datum is understood through
a heat kernel majorization.

Spatial identities and inequalities are understood according
to the almost-everywhere convention fixed in
Section \ref{heats}.

\subsection{Power nonlinearities: general criteria}\label{subsec:power-criteria}

Throughout this section, we consider
\begin{equation}\label{case1}
f(v)=v^p
\qquad\text{for every }v\ge0,
\qquad p>1.
\end{equation}
This function is locally Lipschitz on $[0,\infty)$,
vanishes at zero, and is positive on $(0,\infty)$.
It therefore satisfies the assumptions on $f$ in
\eqref{mainassumptions}.
Moreover, it is nondecreasing and convex and satisfies
\eqref{Hf}. The function $J$ defined in \eqref{J} is
\begin{equation}\label{Jcase1}
J(a)=\frac{a^{1-p}}{p-1}
\qquad\text{for every }a>0,
\end{equation}
and hence
\[
J^{-1}(s)=[(p-1)s]^{-1/(p-1)}
\qquad\text{for every }s>0.
\]

\subsubsection{Blow-up from lower heat kernel bounds}

\begin{remark}\label{rem:model-finite-time}
For the nonlinearity \eqref{case1}, the sufficient condition
in Theorem \ref{teo1} becomes
\begin{equation}\label{power-finite-time}
(p-1)H(T)\|P_Tu_0\|_{L^\infty(X,\mu)}^{p-1}\ge1
\end{equation}
for some $T>0$.
Under this condition, the maximal solution blows up in finite
time and $T_{\max}\le T$.
In particular, this criterion applies to sufficiently large
multiples of a fixed nonnegative profile, as explained in
Remark \ref{rem:large-data-BU}, without requiring long-time
heat kernel estimates.
%
\end{remark}

The next criterion is expressed in terms of the volume of balls.
Using the actual volume growth can give a sharper conclusion
than using a doubling exponent.

\begin{theorem}\label{corollario2}
Assume \eqref{mainassumptions}, let $f$ be given by
\eqref{case1}, and suppose that the heat kernel satisfies
$(HK_\beta)$, as in Definition \ref{def:HK}.
Choose $x_0\in X$, $r>0$, and $\delta>0$ such that
\[
u_0(x)\ge\delta
\qquad\text{for every }x\in B_r(x_0).
\]
If
\begin{equation}\label{hpBUvolume}
\limsup_{T\to\infty}
\frac{H(T)^{1/(p-1)}}{V(x_0,T^{1/\beta})}=\infty,
\end{equation}
then the maximal nonnegative mild solution of
problem \eqref{problema} blows up in finite time.
In particular, the same conclusion holds if
\begin{equation}\label{hpBUcor2}
\limsup_{T\to\infty}
T^{-s/\beta}H(T)^{1/(p-1)}=\infty,
\end{equation}
where $s>0$ is any exponent for which \eqref{eq05} holds.
\end{theorem}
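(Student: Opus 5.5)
The plan is to reduce everything to Corollary \ref{corteo1}(ii). For $f(v)=v^p$ we have $J^{-1}(s)=[(p-1)s]^{-1/(p-1)}$, so
\[
\frac{\underline{\mathcal K}(T)}{J^{-1}(H(T))}=(p-1)^{1/(p-1)}\,\underline{\mathcal K}(T)\,H(T)^{1/(p-1)} .
\]
It therefore suffices to show that, for all large $T$,
\[
\underline{\mathcal K}(T)\ge \frac{c}{V(x_0,T^{1/\beta})}
\]
with some $c>0$ independent of $T$. Condition \eqref{hpBUvolume} then gives \eqref{hpBU}, and the corollary yields finite-time blow-up.

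The main step is the lower bound on $\underline{\mathcal K}(T)$, which comes from \eqref{eq11}. Take $x,y\in B_r(x_0)$, so that $d(x,y)<2r$. For $T\ge r^\beta$ the Gaussian factor satisfies
\[
g_\beta(c_2d(x,y),T)\ge \exp\bigl(-(2c_2r)^{\beta/(\beta-1)}T^{-1/(\beta-1)}\bigr)\ge \exp\bigl(-(2c_2)^{\beta/(\beta-1)}\bigr)>0,
\]
uniformly in $T\ge r^\beta$.

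The prefactor involves $V(x,T^{1/\beta})$ with $x$ varying, and this has to be compared with $V(x_0,T^{1/\beta})$. Since $d(x,x_0)<r\le T^{1/\beta}$, we have $B_{T^{1/\beta}}(x)\subseteq B_{2T^{1/\beta}}(x_0)$. By Theorem \ref{thm:equivalence}, $(HK_\beta)$ gives volume doubling, so
\[
V(x,T^{1/\beta})\le V(x_0,2T^{1/\beta})\le C_D V(x_0,T^{1/\beta}).
\]
Combining, for $\mu\otimes\mu$-a.e. $(x,y)\in B_r(x_0)^2$ and $T\ge r^\beta$,
\[
K_T(x,y)\ge \frac{c_1e^{-(2c_2)^{\beta/(\beta-1)}}}{C_D\,V(x_0,T^{1/\beta})}.
\]
Taking the essential infimum gives the required bound on $\underline{\mathcal K}(T)$. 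I expect this uniform-in-$x$ comparison of volumes to be the only delicate point; doubling handles it directly.

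For the second statement I would use \eqref{eq05} with $R=T^{1/\beta}\ge r$ and center $x_0$:
\[
V(x_0,T^{1/\beta})\le M r^{-s}V(x_0,r)\,T^{s/\beta}.
\]
Hence
\[
\frac{H(T)^{1/(p-1)}}{V(x_0,T^{1/\beta})}\ge \frac{r^s}{M\,V(x_0,r)}\,T^{-s/\beta}H(T)^{1/(p-1)},
\]
so \eqref{hpBUcor2} implies \eqref{hpBUvolume}, and the first part applies.
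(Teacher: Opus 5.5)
Your proof is correct and is essentially the paper's argument: the same uniform lower bound $K_T(x,y)\ge c/V(x_0,T^{1/\beta})$ on $B_r(x_0)\times B_r(x_0)$ for $T\ge r^\beta$, obtained from \eqref{eq11} via $B_{T^{1/\beta}}(x)\subseteq B_{2T^{1/\beta}}(x_0)$ and volume doubling, followed by the same use of \eqref{eq05} for the second claim. The only difference is cosmetic: you conclude through Corollary \ref{corteo1}(ii), whereas the paper bounds $\|P_Tu_0\|_{L^\infty(X,\mu)}$ directly and invokes Remark \ref{rem:model-finite-time}; both amount to Lemma \ref{lemma1} combined with Theorem \ref{teo1}.
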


\begin{proof}
Set $B:=B_r(x_0)$ and $R:=T^{1/\beta}$.
For $T\ge r^\beta$ and $x\in B$,
\[
B_R(x)\subseteq B_{R+r}(x_0)\subseteq B_{2R}(x_0).
\]
Since $(HK_\beta)$ implies volume doubling,
\[
V(x,R)\le C_DV(x_0,R).
\]
Moreover, $d(x,y)\le2r$ for $x,y\in B$, and hence
\[
\begin{aligned}
g_\beta(c_2d(x,y),T)
&\ge
\exp\left[
-\left(\frac{(2c_2r)^\beta}{T}\right)^{1/(\beta-1)}
\right]\\
&\ge c_B>0
\end{aligned}
\]
for every $T\ge r^\beta$, with $c_B$ independent of $T$.
The lower estimate in $(HK_\beta)$ therefore gives
\[
K_T(x,y)\ge
\frac{c_1c_B}{C_DV(x_0,T^{1/\beta})}
\]
for every $T\ge r^\beta$ and almost every
$(x,y)\in B\times B$.
Consequently, there exists $A>0$ such that
\[
(P_Tu_0)(x)\ge
\frac{A}{V(x_0,T^{1/\beta})}
\]
for every sufficiently large $T$ and almost every $x\in B$.
Since $\mu(B)>0$, we obtain
\[
\|P_Tu_0\|_{L^\infty(X,\mu)}
\ge \frac{A}{V(x_0,T^{1/\beta})}.
\]
It follows that
\[
\begin{aligned}
(p-1)H(T)\|P_Tu_0\|_{L^\infty(X,\mu)}^{p-1}
&\ge
(p-1)A^{p-1}
\left(
\frac{H(T)^{1/(p-1)}}{V(x_0,T^{1/\beta})}
\right)^{p-1}.
\end{aligned}
\]
By \eqref{hpBUvolume}, the right-hand side exceeds $1$
for some sufficiently large $T$.
Remark \ref{rem:model-finite-time} yields finite-time blow-up.
Finally, \eqref{eq05} implies
\[
V(x_0,T^{1/\beta})\le CT^{s/\beta}
\]
for all sufficiently large $T$.
Thus \eqref{hpBUcor2} implies \eqref{hpBUvolume}.
\end{proof}

\begin{remark}\label{rem:large-time-lower}
For the power nonlinearity \eqref{case1}, the preceding
argument only requires a lower heat kernel bound at large
times on a fixed set carrying positive initial mass.
More precisely, suppose that $B$ has finite positive measure,
that $\int_Bu_0\,d\mu>0$, and that, for some $a>0$,
\[
K_T(x,y)\ge c_BT^{-a}
\]
for every sufficiently large $T$ and almost every
$(x,y)\in B\times B$.
Then
\[
\limsup_{T\to\infty}
T^{-a}H(T)^{1/(p-1)}=\infty
\]
implies finite-time blow-up.
Indeed,
\[
(P_Tu_0)(x)\ge
c_BT^{-a}\int_Bu_0\,d\mu
\]
for almost every $x\in B$, and
\eqref{power-finite-time} holds for some sufficiently large $T$.

This observation also applies in the discrete graph framework.
\end{remark}

For the autonomous equation, conservativity allows us to
include the critical exponent by combining the necessary
condition for global existence with an accumulation of mass.

\begin{theorem}[Blow-up at the critical exponent]
\label{thm:critical-volume}
Assume \eqref{mainassumptions}, let $h\equiv1$, and let
$f$ be given by \eqref{case1}.
Suppose that the heat kernel is conservative and satisfies
$(HK_\beta)$.
Assume that, for some $x_0\in X$, $D>0$, $C_V>0$, and $R_0>0$,
\begin{equation}\label{critical-volume-growth}
V(x_0,R)\le C_VR^D
\qquad\text{for every }R\ge R_0.
\end{equation}
If
\[
p=1+\frac{\beta}{D},
\]
then every nonnegative, nontrivial initial datum satisfying
the standing assumptions generates a maximal mild solution
that blows up in finite time.
\end{theorem}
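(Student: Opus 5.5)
Argue by contradiction: suppose the maximal mild solution $u$ is global. The plan follows the outline given in the introduction. Restart the solution at arbitrary times, use Lemma \ref{lemma2} to show that global existence forces a uniform bound on the mass, and then use the lower bound in $(HK_\beta)$ to show that the reaction makes the mass grow at least logarithmically.

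\emph{Step 1 (uniform mass bound).} Fix $t_0\ge0$. By the restart identity in the proof of Proposition \ref{prop:continuation}, $v(t):=u(t_0+t)$ is a global mild solution with datum $u(t_0)$ and $h\equiv1$. Lemma \ref{lemma2} applies to $v$ on every $[0,T]$ and gives
\[
\|P_Tu(t_0)\|_{L^\infty}<((p-1)T)^{-1/(p-1)}\qquad\text{for all }T>0,\ t_0\ge0.
\]
Take $T\ge\max(R_0,r)^\beta$ and $x\in B_{T^{1/\beta}}(x_0)$. For $y$ in the same ball, the lower bound of $(HK_\beta)$ together with volume doubling (Theorem \ref{thm:equivalence}) gives $K_T(x,y)\ge c/V(x_0,T^{1/\beta})\ge c'T^{-D/\beta}$, by \eqref{critical-volume-growth}. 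Hence
\[
c'T^{-D/\beta}\int_{B_{T^{1/\beta}}(x_0)}u(t_0)\,d\mu\le C\,T^{-1/(p-1)}.
\]
Since $1/(p-1)=D/\beta$, the powers of $T$ cancel. Letting $T\to\infty$ yields $\int_X u(t_0)\,d\mu\le C_0$ uniformly in $t_0$. Here $C_0$ depends only on the structural constants and not on $u_0$.

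\emph{Step 2 (mass growth).} Conservativity together with the mild formulation and Tonelli's theorem gives
\[
m(t):=\int_X u(t)\,d\mu=\int u_0\,d\mu+\int_0^t\int_X u(s)^p\,d\mu\,ds
\]
(the mass of $u_0$ may be infinite, in which case Step 1 already gives a contradiction). From $u(s)\ge P_su_0$ and $u_0\ge\delta$ on $B_r(x_0)$, the lower kernel bound gives $u(y,s)\ge a\,s^{-D/\beta}$ for $y\in B_{s^{1/\beta}}(x_0)$ and $s$ large. Therefore
\[
\int_X u(s)^p\,d\mu\ge a^p s^{-pD/\beta}\,V(x_0,s^{1/\beta}).
\]
We need a lower bound $V(x_0,R)\ge cR^D$, which is not assumed. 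To obtain it, use the upper bound in $(HK_\beta)$ and conservativity: $1=\int K_t(x_0,y)\,d\mu(y)$. Combined with $V(x_0,t^{1/\beta})\le C_V t^{D/\beta}$ and doubling, this gives only an upper bound on the mass of the kernel, so it does not directly bound $V$ from below.

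The cleaner route is to avoid the volume entirely and estimate through Jensen. Set $w=P_su_0$. Restricting to $B_{s^{1/\beta}}$, Hölder's inequality gives
\[
\int_B w^p\,d\mu\ge \Bigl(\int_B w\,d\mu\Bigr)^p V^{1-p}.
\]
Conservativity and the upper Gaussian tail give $\int_B w\,d\mu\ge c\int u_0\,d\mu$ (at least $c\,\delta V(x_0,r)$). With $V\le C_Vs^{D/\beta}$ this yields
\[
\int_X u(s)^p\,d\mu\ge c\,s^{-(p-1)D/\beta}=c\,s^{-1}.
\]
Integrating, $m(t)\ge c\log t\to\infty$, which contradicts Step 1.

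The main obstacle is Step 2: bounding the mass of $P_su_0$ inside $B_{s^{1/\beta}}(x_0)$ from below uniformly in $s$. I would try to bound the tail $\int_{X\setminus B}K_s(x,y)\,d\mu(y)$ for $x\in B_r(x_0)$ via the upper estimate, doubling (annuli decomposition with factor $g_\beta$), and conservativity, showing that this tail is at most $1/2$ after enlarging $B$ by a fixed factor. If some $L^1$-measurability issue arises with $u_0\notin L^1$, replace $u_0$ by $u_0\mathbf 1_{B_r(x_0)}$ via comparison, which is available since the Picard iterates are monotone in the datum.
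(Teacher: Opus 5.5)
Your proposal is correct and follows the same overall route as the paper. Your Step 1 coincides with the paper's: restart, then apply Lemma \ref{lemma2} to the datum $u(t_0)$. This is legitimate because its proof uses neither continuity nor nontriviality of the datum, which you should state explicitly. Then come the lower kernel bound on a ball of radius $T^{1/\beta}$, cancellation of the powers since $D/\beta=1/(p-1)$, and monotone convergence. The use of conservativity and symmetry for the mass identity and the logarithmic contradiction is also the same.

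The one real difference is how you obtain $\int_Xu(s)^p\,d\mu\ge c/s$. The paper keeps the volume in the denominator. For $R=s^{1/\beta}$ large, the lower bound in $(HK_\beta)$ and doubling give $u(x,s)\ge(P_su_0)(x)\ge A_2m_0/V(x_0,R)$ on $B_R(x_0)$, with $A_2>0$ independent of $s$. Hence
\[
\int_{B_R(x_0)}u(s)^p\,d\mu\ge(A_2m_0)^pV(x_0,R)^{1-p}\ge(A_2m_0)^pC_V^{1-p}R^{-D(p-1)}.
\]
Since $1-p<0$, only the upper volume bound is needed. Your first attempt stalled only because you inserted $V\le C_VR^D$ before integrating.

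Your H\"older route also works. For a fixed large $\kappa$, annular decomposition with the upper bound in $(HK_\beta)$ and doubling gives $\int_{X\setminus B_{(\kappa-1)R}(z)}K_s(z,y)\,d\mu(y)\le\frac12$. Conservativity and symmetry then give $\int_{B_{\kappa R}(x_0)}P_su_0\,d\mu\ge\frac12\int_{B_r(x_0)}u_0\,d\mu$ for $r\le R$. H\"older with $V(x_0,\kappa R)\le C_V\kappa^DR^D$ finishes the step. What this buys is that the step needs only localization of the mass of $P_su_0$ in a ball of volume $O(s^{D/\beta})$, not a pointwise lower bound. However, Step 1 already uses the lower kernel bound, so no hypothesis is saved. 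Moreover, the pointwise bound you had already derived, before replacing $1/V$ by $R^{-D}$, gives $\int_{B_R(x_0)}P_su_0\,d\mu\ge c\,m_0$ in one line. That makes the tail estimate and the enlargement unnecessary. The comparison via monotone Picard iterates is also unnecessary: $u(s)\ge P_su_0\ge P_s(u_0\mathbf 1_{B_r(x_0)})$ follows from positivity of $P_s$ alone.

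One small correction: $x_0$ is fixed by the volume hypothesis \eqref{critical-volume-growth}, so you cannot assume $u_0\ge\delta$ on a ball centred at $x_0$. Use instead $m_0:=\int_{B_r(x_0)}u_0\,d\mu>0$ for $r$ large, as the paper does; this is all either argument needs.
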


\begin{proof}
Suppose, by contradiction, that a global nonnegative mild
solution $u$ exists.
We first show that global existence forces its total mass
to remain uniformly bounded in time, and then obtain a
contradiction from the accumulated reaction.

\noindent Fix $s\ge0$.
Since $h\equiv1$, the restart identity shows that
$t\mapsto u(s+t)$ is a mild solution with initial datum $u(s)$.
The argument of Lemma \ref{lemma2} applies to this restarted
solution: it requires only a bounded nonnegative initial
datum, without spatial continuity.
Together with \eqref{Jcase1}, it gives
\begin{equation}\label{critical-restarted-bound}
\|P_Tu(s)\|_{L^\infty(X,\mu)}
\le [(p-1)T]^{-1/(p-1)}
=(p-1)^{-D/\beta}T^{-D/\beta}
\end{equation}
for every $T>0$ and every $s\ge0$.

\noindent Set $R:=T^{1/\beta}$ and take
$R\ge\max\{1,R_0\}$.
For $x\in B_1(x_0)$ and $y\in B_R(x_0)$,
\[
d(x,y)<R+1\le2R,
\]
and
\[
V(x,R)
\le V(x_0,R+1)
\le V(x_0,2R)
\le C_V2^DR^D.
\]
The lower estimate in $(HK_\beta)$ therefore yields
\[
K_T(x,y)\ge A_1R^{-D}
\]
for almost every
$(x,y)\in B_1(x_0)\times B_R(x_0)$,
where $A_1>0$ is independent of $R$ and $s$.
Consequently,
\[
(P_Tu(s))(x)
\ge A_1R^{-D}
\int_{B_R(x_0)}u(y,s)\,d\mu(y)
\]
for almost every $x\in B_1(x_0)$.

\noindent Since $\mu(B_1(x_0))>0$, taking the essential supremum
and using \eqref{critical-restarted-bound} gives
\[
\begin{aligned}
A_1R^{-D}\int_{B_R(x_0)}u(y,s)\,d\mu(y)
&\le \|P_Tu(s)\|_{L^\infty(X,\mu)}\\
&\le (p-1)^{-D/\beta}R^{-D}.
\end{aligned}
\]
Letting $R\to\infty$ and applying monotone convergence,
we obtain
\begin{equation}\label{critical-uniform-mass}
\int_Xu(y,s)\,d\mu(y)\le C_*
\qquad\text{for every }s\ge0,
\end{equation}
where
\[
C_*:=A_1^{-1}(p-1)^{-D/\beta}.
\]
In particular, $u(s)\in L^1(X,\mu)$ for every $s\ge0$,
including $s=0$.

\noindent Since $u_0$ is nonnegative and nontrivial, choose $r>0$ such that
\[
m_0:=\int_{B_r(x_0)}u_0(y)\,d\mu(y)>0.
\]
For sufficiently large $t$, put $R:=t^{1/\beta}$,
so that $R\ge r$.
If $x\in B_R(x_0)$ and $y\in B_r(x_0)$, then
$d(x,y)<2R$.
Moreover, volume doubling gives
\[
V(x,R)\le V(x_0,2R)\le C_DV(x_0,R).
\]
The lower heat kernel estimate therefore implies
\[
K_t(x,y)\ge\frac{A_2}{V(x_0,R)}
\]
for almost every
$(x,y)\in B_R(x_0)\times B_r(x_0)$,
where $A_2>0$ is independent of $t$.

\noindent The mild formulation gives $u(t)\ge P_tu_0$, and hence
\[
u(x,t)\ge
\frac{A_2m_0}{V(x_0,t^{1/\beta})}
\]
for almost every $x\in B_{t^{1/\beta}}(x_0)$ and every
sufficiently large $t$.
Thus
\[
\begin{aligned}
\int_Xu(x,t)^p\,d\mu(x)
&\ge
(A_2m_0)^pV(x_0,t^{1/\beta})^{1-p}\\
&\ge
(A_2m_0)^pC_V^{1-p}t^{-D(p-1)/\beta}.
\end{aligned}
\]
Since $D(p-1)=\beta$, there exist $A_3>0$ and $t_0>0$ such that
\begin{equation}\label{critical-reaction-lower}
\int_Xu(x,t)^p\,d\mu(x)\ge\frac{A_3}{t}
\qquad\text{for every }t\ge t_0.
\end{equation}

\noindent Finally, symmetry and conservativity imply
\[
\int_XK_t(x,y)\,d\mu(x)=1
\]
for every $t>0$ and almost every $y$.
Integrating the mild formulation and using Tonelli's theorem
therefore gives
\[
\int_Xu(x,t)\,d\mu(x)
=
\int_Xu_0(x)\,d\mu(x)
+\int_0^t\int_Xu(x,s)^p\,d\mu(x)\,ds.
\]
By \eqref{critical-reaction-lower},
\[
\int_Xu(x,t)\,d\mu(x)
\ge
\int_Xu_0(x)\,d\mu(x)
+A_3\log\left(\frac{t}{t_0}\right)
\qquad\text{for every }t\ge t_0.
\]
This contradicts \eqref{critical-uniform-mass}.

\noindent Thus no global nonnegative mild solution exists.
Proposition \ref{prop:continuation} implies that
$T_{\max}<\infty$ and
\[
\lim_{t\uparrow T_{\max}}
\|u(\cdot,t)\|_{L^\infty(X,\mu)}=\infty.
\]
\end{proof}

\begin{remark}
Under the geometric hypotheses of
Theorem \ref{thm:critical-volume}, with $h\equiv1$ and
$f$ given by \eqref{case1}, combining that theorem with
Theorem \ref{corollario2} gives finite-time blow-up
for every admissible nonnegative, nontrivial initial datum
whenever
\[
1<p\le1+\frac{\beta}{D}.
\]
For the subcritical range, the volume bound
\eqref{critical-volume-growth} transfers to a center carrying
positive initial mass by inclusion of balls.
Conservativity is needed for the critical argument,
but not for the subcritical conclusion.
\end{remark}


\subsubsection{Global existence from upper heat kernel bounds}

We retain the nonlinearity \eqref{case1}, whose assumptions
required by Theorem \ref{teo2} have already been verified.

\begin{theorem}\label{corGE1}
Assume \eqref{mainassumptions}, let $f$ be given by
\eqref{case1}, and retain the kernel assumptions of
Theorem \ref{teo2}.
Suppose that
\begin{equation}\label{eq53case1}
I_{p,\gamma}:=
\int_0^\infty
h(t)\overline{\mathcal K}(t+\gamma)^{p-1}\,dt<\infty.
\end{equation}
Choose $M>0$ such that
\begin{equation}\label{smallMpower}
pM^{p-1}I_{p,\gamma}\le\frac12.
\end{equation}
If
\begin{equation}\label{smallinitialpower}
0\le u_0(x)\le\varepsilon K_\gamma(x,y_0),
\qquad
0<\varepsilon\le\frac M2,
\end{equation}
then problem \eqref{problema} admits a unique global
nonnegative mild solution satisfying
\[
0\le u(x,t)\le MK_{t+\gamma}(x,y_0)
\]
for every $t\ge0$ and almost every $x\in X$.
\end{theorem}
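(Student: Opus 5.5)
The plan is to apply Theorem \ref{teo2} with the constant weight $\omega\equiv1$ and $\delta=\tfrac12$. With these choices, \eqref{omega} holds trivially. The initial condition \eqref{smallinitialpower} is exactly \eqref{eq55} with $\omega(0)=1$. Moreover, $\varepsilon\le M/2=(1-\delta)M$ gives \eqref{eq54}. The kernel assumptions \eqref{columnsemigroup} and \eqref{columnbound}, together with finiteness and measurability of $\overline{\mathcal K}$, are retained from Theorem \ref{teo2} by hypothesis. So the only thing left to check is the Lipschitz condition \eqref{LM}.

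For $f(v)=v^p$ with $p>1$, $f$ is $C^1$ with $f'(\xi)=p\xi^{p-1}$, which is nondecreasing. Hence, whenever $R_M(t)>0$,
\[
\sigma_M(t)=\max_{0\le\xi\le R_M(t)}p\xi^{p-1}=pR_M(t)^{p-1}=pM^{p-1}\overline{\mathcal K}(t+\gamma)^{p-1},
\]
using $\omega\equiv1$. When $R_M(t)=0$ we have $\sigma_M(t)=0$, so the same formula still holds. Then, for every $t>0$,
\[
\int_0^t\sigma_M(s)h(s)\omega(s)\,ds
= pM^{p-1}\int_0^t h(s)\overline{\mathcal K}(s+\gamma)^{p-1}\,ds
\le pM^{p-1}I_{p,\gamma}\le\tfrac12=\delta\,\omega(t),
\]
by \eqref{eq53case1}, \eqref{smallMpower}, and the nonnegativity of $h$. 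This is \eqref{LM}.

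Theorem \ref{teo2} then gives a unique global nonnegative mild solution satisfying \eqref{globalbound}, which with $\omega\equiv1$ is exactly the bound $0\le u\le MK_{t+\gamma}(\cdot,y_0)$. Uniqueness in the full class of mild solutions is inherited from that theorem as well. There is essentially no obstacle here. The only points needing care are the explicit identification of $\sigma_M$ via monotonicity of $f'$, including the degenerate case $R_M(t)=0$, and noting that $M$ satisfying \eqref{smallMpower} exists because $I_{p,\gamma}<\infty$; any $M>0$ works if $I_{p,\gamma}=0$.
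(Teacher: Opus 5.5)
Your proposal is correct and is exactly the paper's argument: apply Theorem \ref{teo2} with $\omega\equiv1$ and $\delta=\tfrac12$, compute $\sigma_M(t)=pM^{p-1}\overline{\mathcal K}(t+\gamma)^{p-1}$, and deduce \eqref{LM} from \eqref{smallMpower}, with \eqref{smallinitialpower} giving $\varepsilon\le(1-\delta)M$. Your extra care about the degenerate case $R_M(t)=0$ is harmless and consistent with the paper's conventions.
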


\begin{proof}
We apply Theorem \ref{teo2} with $\omega\equiv1$.
For the nonlinearity \eqref{case1},
\begin{equation}\label{thetaMcase1}
\theta_M(t)=
M^{p-1}\overline{\mathcal K}(t+\gamma)^{p-1},
\end{equation}
whereas
\begin{equation}\label{sigmaMcase1}
\sigma_M(t)=
pM^{p-1}\overline{\mathcal K}(t+\gamma)^{p-1}
=p\theta_M(t).
\end{equation}
Thus \eqref{smallMpower} gives
\[
\int_0^t h(s)\sigma_M(s)\,ds\le\frac12
\qquad\text{for every }t>0.
\]
Condition \eqref{LM} holds with $\delta=1/2$,
and \eqref{smallinitialpower} ensures
$\varepsilon\le(1-\delta)M$.
All the hypotheses of Theorem \ref{teo2} are therefore satisfied.
\end{proof}

A useful sufficient condition for \eqref{eq53case1} is
provided by a positive bottom of the spectrum. We write
\begin{equation}\label{positivebottom}
\lambda_1:=\inf\sigma(L)>0,
\qquad L=-\Delta.
\end{equation}
This concerns the bottom of the full spectrum, rather than
a positive gap above a zero eigenvalue.
The spectral condition gives decay on $L^2$.
A uniform heat kernel bound at one positive time allows us
to transfer this decay to the kernel.

\begin{proposition}\label{prop:spectralkernel}
Assume \eqref{positivebottom}. Let $\gamma>0$ and suppose that
\[
A_\gamma:=\overline{\mathcal K}(\gamma/2)<\infty.
\]
Then
\begin{equation}\label{spectralshiftbound}
\overline{\mathcal K}(t+\gamma)
\le A_\gamma e^{-\lambda_1t}
\qquad\text{for every }t\ge0.
\end{equation}
\end{proposition}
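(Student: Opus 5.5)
Write $K_{t+\gamma}(x,y)$ through the semigroup as a composition of three operators, $P_{\gamma/2}P_tP_{\gamma/2}$. The outer two factors are bounded $L^1\to L^2$ and $L^2\to L^\infty$ with norm $\sqrt{A_\gamma}$ each. The middle factor $P_t$ contracts on $L^2$ by $e^{-\lambda_1 t}$, by the spectral theorem and \eqref{positivebottom}. Multiplying the three norms gives \eqref{spectralshiftbound}.

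\textbf{Step 1 (ultracontractivity at time $\gamma/2$).} For $\phi\in L^1(X,\mu)\cap L^2(X,\mu)$, I compute
\[
\|P_{\gamma/2}\phi\|_{L^2}^2=\langle P_\gamma\phi,\phi\rangle=\iint K_\gamma(x,y)\phi(x)\phi(y)\,d\mu\,d\mu .
\]
The kernel is only controlled at $\gamma/2$, so I bound $K_\gamma$ as follows. By the semigroup identity \eqref{eq10} with $s=t=\gamma/2$, together with (a) and symmetry,
\[
K_\gamma(x,z)=\int_X K_{\gamma/2}(x,y)K_{\gamma/2}(y,z)\,d\mu(y)\le A_\gamma\int_XK_{\gamma/2}(x,y)\,d\mu(y)\le A_\gamma
\]
almost everywhere. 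This gives $\|P_{\gamma/2}\phi\|_{L^2}^2\le A_\gamma\|\phi\|_{L^1}^2$, so $\|P_{\gamma/2}\|_{L^1\to L^2}\le\sqrt{A_\gamma}$. By duality and self-adjointness, $\|P_{\gamma/2}\|_{L^2\to L^\infty}\le\sqrt{A_\gamma}$ as well.

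\textbf{Step 2 (spectral decay).} Since $L\ge\lambda_1$, the spectral theorem gives $\|P_t\|_{L^2\to L^2}=\|e^{-tL}\|\le e^{-\lambda_1 t}$.

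\textbf{Step 3 (composition).} By the semigroup property, $P_{t+\gamma}=P_{\gamma/2}P_tP_{\gamma/2}$, and therefore
\[
\|P_{t+\gamma}\|_{L^1\to L^\infty}\le A_\gamma e^{-\lambda_1 t}.
\]
It remains to identify the $L^1\to L^\infty$ norm of a positive integral operator with the essential supremum of its kernel, in the inequality direction we need. Take $\phi=\mathbf 1_E/\mu(E)$ and $\psi=\mathbf 1_F/\mu(F)$ for relatively compact sets $E,F$ of positive measure. Then
\[
\mu(E)^{-1}\mu(F)^{-1}\iint_{F\times E}K_{t+\gamma}\,d\mu\,d\mu\le A_\gamma e^{-\lambda_1 t}.
\]
Rectangles $F\times E$ generate the product $\sigma$-algebra, and $\mu$ is $\sigma$-finite. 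Using Lebesgue differentiation on products, or more simply the fact that a nonnegative function whose averages over all finite-measure rectangles are at most $c$ must be at most $c$ almost everywhere, I conclude $K_{t+\gamma}\le A_\gamma e^{-\lambda_1 t}$ $\mu\otimes\mu$-a.e. This is \eqref{spectralshiftbound}. The case $t=0$ is already contained in the Step 1 bound $K_\gamma\le A_\gamma$.

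\textbf{Main obstacle.} The only delicate point is this last measure-theoretic passage from averages over rectangles to an almost-everywhere bound. I would prove it directly: if the set $\{K>c+\eta\}$ had positive measure, approximate it in measure by a finite disjoint union of rectangles, as the semiring approximation allows. Some rectangle in that union would then carry an average exceeding $c$, which contradicts the averaged bound.
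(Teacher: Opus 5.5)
Your proof is correct and follows the paper's route: you factor $P_{t+\gamma}=P_{\gamma/2}P_tP_{\gamma/2}$, bound the outer factors by $\sqrt{A_\gamma}$ from $L^1\to L^2$ and $L^2\to L^\infty$, and use the spectral decay $e^{-\lambda_1 t}$ on $L^2$ in the middle. The differences are cosmetic. You get the $L^1\to L^2$ bound first, via $K_\gamma\le A_\gamma$, and then dualize, whereas the paper proves $L^2\to L^\infty$ by Cauchy--Schwarz and dualizes in the other direction. You also write out the passage from the $L^1\to L^\infty$ operator norm to the essential bound on the kernel, which the paper states in one line.
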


\begin{proof}
Set $a:=\gamma/2$.
The mass inequality and the essential bound on $K_a$ imply
\[
\|P_a\|_{L^2\to L^\infty}\le A_\gamma^{1/2}.
\]
Indeed, by the Cauchy--Schwarz inequality,
\[
\begin{aligned}
|P_av(x)|^2
&\le
\left(\int_XK_a(x,y)^2\,d\mu(y)\right)
\|v\|_{L^2(X,\mu)}^2\\
&\le A_\gamma\|v\|_{L^2(X,\mu)}^2
\end{aligned}
\]
for almost every $x$.
By self-adjointness and duality,
\[
\|P_a\|_{L^1\to L^2}\le A_\gamma^{1/2}.
\]
Moreover, the spectral theorem gives
\[
\|P_t\|_{L^2\to L^2}=e^{-\lambda_1t}.
\]
Factoring $P_{t+\gamma}=P_aP_tP_a$, we obtain
\[
\|P_{t+\gamma}\|_{L^1\to L^\infty}
\le A_\gamma e^{-\lambda_1t}.
\]
Since the kernel is nonnegative, this operator bound yields
\eqref{spectralshiftbound}.
\end{proof}

\begin{theorem}\label{thm:gap-power}
Assume \eqref{mainassumptions}, the kernel assumptions of
Theorem \ref{teo2}, and the hypotheses of
Proposition \ref{prop:spectralkernel}.
Let $f$ be given by \eqref{case1}, and suppose that
\begin{equation}\label{weightedhgap}
E_p:=
\int_0^\infty h(t)e^{-(p-1)\lambda_1t}\,dt<\infty.
\end{equation}
Choose $M>0$ such that
\[
pM^{p-1}A_\gamma^{p-1}E_p\le\frac12.
\]
If the initial datum satisfies \eqref{smallinitialpower},
then problem \eqref{problema} admits a unique global
nonnegative mild solution satisfying
\[
0\le u(x,t)\le MK_{t+\gamma}(x,y_0)
\]
for every $t\ge0$ and almost every $x\in X$.
Moreover,
\begin{equation}\label{gapdecay}
\|u(t)\|_{L^\infty(X,\mu)}
\le MA_\gamma e^{-\lambda_1t}
\qquad\text{for every }t\ge0.
\end{equation}
\end{theorem}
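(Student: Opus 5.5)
The plan is to reduce Theorem \ref{thm:gap-power} to Theorem \ref{corGE1}, which is itself an application of Theorem \ref{teo2} with $\omega\equiv1$. The only hypothesis of Theorem \ref{corGE1} not yet available is the integrability condition \eqref{eq53case1}, together with the smallness condition \eqref{smallMpower}. Both will come from the exponential bound of Proposition \ref{prop:spectralkernel}.

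By Proposition \ref{prop:spectralkernel}, we have $\overline{\mathcal K}(t+\gamma)\le A_\gamma e^{-\lambda_1 t}$ for all $t\ge0$. Raising to the power $p-1>0$ and multiplying by $h(t)\ge0$ gives
\[
I_{p,\gamma}=\int_0^\infty h(t)\overline{\mathcal K}(t+\gamma)^{p-1}\,dt\le A_\gamma^{p-1}\int_0^\infty h(t)e^{-(p-1)\lambda_1t}\,dt=A_\gamma^{p-1}E_p<\infty .
\]
Hence \eqref{eq53case1} holds. Moreover, the choice of $M$ gives $pM^{p-1}I_{p,\gamma}\le pM^{p-1}A_\gamma^{p-1}E_p\le\frac12$, which is exactly \eqref{smallMpower}. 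Since the kernel assumptions of Theorem \ref{teo2} are retained and $u_0$ satisfies \eqref{smallinitialpower}, Theorem \ref{corGE1} then yields the unique global nonnegative mild solution with $0\le u(x,t)\le MK_{t+\gamma}(x,y_0)$.

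For the decay estimate \eqref{gapdecay}, we combine this pointwise bound with \eqref{columnbound} and \eqref{spectralshiftbound}:
\[
\|u(t)\|_{L^\infty(X,\mu)}\le M\|K_{t+\gamma}(\cdot,y_0)\|_{L^\infty(X,\mu)}\le M\overline{\mathcal K}(t+\gamma)\le MA_\gamma e^{-\lambda_1 t}.
\]
At $t=0$, the same chain follows from $u_0\le \varepsilon K_\gamma(\cdot,y_0)$, $\varepsilon\le M$, and \eqref{columnbound} with $t=0$.

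There is no genuine obstacle here. The only point requiring attention is measurability and finiteness of $\overline{\mathcal K}$, which is already part of the retained kernel assumptions. One must also note that the bound from Proposition \ref{prop:spectralkernel} holds for every $t\ge0$, so it can be used inside the integral defining $I_{p,\gamma}$.
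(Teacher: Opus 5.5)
Your proposal is correct and follows essentially the same route as the paper: you bound $I_{p,\gamma}\le A_\gamma^{p-1}E_p$ via \eqref{spectralshiftbound}, apply Theorem \ref{corGE1}, and obtain \eqref{gapdecay} from \eqref{columnbound} and \eqref{spectralshiftbound}. Your separate treatment of $t=0$ is harmless but not needed, since Theorem \ref{teo2} already gives the pointwise bound for every $t\ge0$.
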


\begin{proof}
The power nonlinearity satisfies the assumptions on $f$
in Theorem \ref{corGE1}.
By \eqref{spectralshiftbound},
\[
I_{p,\gamma}
\le
A_\gamma^{p-1}
\int_0^\infty h(t)e^{-(p-1)\lambda_1t}\,dt
=
A_\gamma^{p-1}E_p.
\]
The global existence statement follows from
Theorem \ref{corGE1}.
The norm estimate follows from \eqref{columnbound}
and \eqref{spectralshiftbound}.
\end{proof}

\subsection{Power nonlinearities: the role of the time coefficient}

We now specialize the preceding criteria to polynomial and
exponential time coefficients, retaining the nonlinearity
\eqref{case1}.
As verified above, all the nonlinearity assumptions of the
blow-up and global existence results used below are fulfilled.

\subsubsection{Constant and polynomial time coefficients}

Consider
\[
h(t)=(1+t)^q,\qquad q>-1.
\]
This coefficient is continuous and strictly positive,
as required by \eqref{mainassumptions}.
The constant coefficient $h\equiv1$ corresponds to $q=0$.
We have
\[
H(T)=\frac{(1+T)^{q+1}-1}{q+1}
\sim\frac{T^{q+1}}{q+1}
\qquad\text{as }T\to\infty.
\]

\begin{example}[Blow-up under polynomial volume growth]
Assume $(HK_\beta)$ and suppose that, for some $d>0$,
\[
V(x_0,R)\le CR^d
\]
for all sufficiently large $R$, where $x_0$ is chosen
as in Theorem \ref{corollario2}.
Then, for some $C_0>0$,
\[
\frac{H(T)^{1/(p-1)}}{V(x_0,T^{1/\beta})}
\ge C_0T^{(q+1)/(p-1)-d/\beta}
\]
for all sufficiently large $T$.
Theorem \ref{corollario2} therefore gives finite-time blow-up
whenever
\[
1<p<1+\frac{\beta(q+1)}d.
\]
If $q=0$ and the heat kernel is conservative,
Theorem \ref{thm:critical-volume} also covers equality,
so that the blow-up range becomes
\[
1<p\le1+\frac{\beta}{d}.
\]
\end{example}

\begin{example}[Global existence under polynomial heat kernel decay]
Suppose that, for some $a>0$ and $C>0$,
\[
\overline{\mathcal K}(t+\gamma)
\le C(1+t)^{-a}
\qquad\text{for every }t\ge0.
\]
For $h(t)=(1+t)^q$, condition \eqref{eq53case1} follows from
\[
\int_0^\infty(1+t)^{q-a(p-1)}\,dt<\infty.
\]
Consequently, Theorem \ref{corGE1} gives global solutions
for sufficiently small initial data satisfying
\eqref{smallinitialpower} whenever
\[
p>1+\frac{q+1}{a}.
\]
In particular, if the volume growth is of order $R^d$,
with $d>0$, and the corresponding uniform heat kernel decay
is of order $t^{-d/\beta}$, the blow-up and global existence
criteria have the same threshold
\[
1+\frac{\beta(q+1)}d.
\]
For general $q>-1$, the preceding asymptotic criteria do not
settle the equality case.
For $q=0$, Theorem \ref{thm:critical-volume} gives blow-up
at equality whenever its additional hypotheses hold.
The finite-time criterion in Remark \ref{rem:model-finite-time}
remains applicable to sufficiently large data for every $q>-1$.
\end{example}


\subsubsection{Exponentially increasing time coefficients}

We now assume
\begin{equation}\label{eq:exponential-coefficients}
h(t)=e^{qt},\qquad q>0.
\end{equation}
The coefficient $h$ satisfies \eqref{mainassumptions},
and the nonlinearity remains the one in \eqref{case1}.
In this case,
\begin{equation}\label{eq:JH-exponential}
J(a)=\frac{a^{1-p}}{p-1},
\qquad
H(T)=\frac{e^{qT}-1}{q},
\end{equation}
and
\begin{equation}\label{eq:Jinverse-exponential}
\begin{aligned}
J^{-1}(H(T))
&=
\left[\frac{p-1}{q}(e^{qT}-1)\right]^{-1/(p-1)}\\
&\sim
\left(\frac{q}{p-1}\right)^{1/(p-1)}
e^{-qT/(p-1)}
\qquad\text{as }T\to\infty.
\end{aligned}
\end{equation}

The first two results concern blow-up, respectively under
Gaussian estimates and under an exponential lower bound
governed by the spectral bottom.

\begin{theorem}\label{thm:gaussian-exponential-blowup}
Assume \eqref{mainassumptions},
\eqref{case1}, and \eqref{eq:exponential-coefficients}.
Suppose that the heat kernel satisfies the bilateral
Gaussian estimates $(HK_2)$.
Then every nonnegative, nontrivial initial datum satisfying
the standing assumptions generates a maximal mild solution
that blows up in finite time.
\end{theorem}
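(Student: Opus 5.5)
We would apply Theorem \ref{teo1} in the form of Remark \ref{rem:model-finite-time}: it suffices to find some $T>0$ with
\[
(p-1)H(T)\|P_Tu_0\|_{L^\infty(X,\mu)}^{p-1}\ge1,
\qquad H(T)=\frac{e^{qT}-1}{q}.
\]
Since $H(T)$ grows exponentially, it is enough to show that $\|P_Tu_0\|_{L^\infty(X,\mu)}$ decays at most subexponentially. More precisely, we aim to show that for every $\eta>0$ there is $c_\eta>0$ with $\|P_Tu_0\|_{L^\infty}\ge c_\eta e^{-\eta T}$ for large $T$. We would then take $\eta<q/(p-1)$.

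For the lower bound we follow the proof of Theorem \ref{corollario2}. Choose $x_0,r,\delta$ with $u_0\ge\delta$ on $B=B_r(x_0)$. For $T\ge r^2$ and almost every $(x,y)\in B\times B$, the lower estimate in $(HK_2)$, together with volume doubling (Theorem \ref{thm:equivalence}) used to compare $V(x,\sqrt T)$ with $V(x_0,\sqrt T)$, gives
\[
K_T(x,y)\ge\frac{c}{V(x_0,\sqrt T)}.
\]
Lemma \ref{lemma1} then yields
\[
\|P_Tu_0\|_{L^\infty}\ge\frac{A}{V(x_0,\sqrt T)}.
\]

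The key step is a polynomial upper bound for $V(x_0,R)$. Volume doubling in the form \eqref{eq05} gives $V(x_0,R)\le M R^s V(x_0,1)$ for $R\ge1$. Hence $\|P_Tu_0\|_{L^\infty}\ge A' T^{-s/2}$, and
\[
(p-1)H(T)\|P_Tu_0\|^{p-1}_{L^\infty}\ge c\,(e^{qT}-1)\,T^{-s(p-1)/2}\to\infty.
\]
Equivalently, condition \eqref{hpBUcor2} holds for every $p>1$, because $H(T)^{1/(p-1)}$ grows exponentially while $T^{s/2}$ grows only polynomially. So one can simply invoke Theorem \ref{corollario2} with $\beta=2$, and the conclusion follows, including $T_{\max}\le T$ for the chosen $T$.

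The main point needing care is that the Gaussian lower bound is uniform on $B\times B$ for large $T$. This holds because $g_2(c_2 d(x,y),T)\ge e^{-(2c_2r)^2/T}\ge e^{-4c_2^2}$ when $T\ge r^2$, exactly as in the proof of Theorem \ref{corollario2}. Nothing beyond $(HK_2)$ is needed; conservativity and the form of $h$ enter only through $H$. Since the argument is a direct specialization of Theorem \ref{corollario2}, we expect no real obstacle.
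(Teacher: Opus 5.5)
Your proposal is correct and takes essentially the same route as the paper. The paper also uses the volume doubling implied by $(HK_2)$ to get $T^{-s/2}H(T)^{1/(p-1)}\to\infty$, so that \eqref{hpBUcor2} holds with $\beta=2$, and then invokes Theorem \ref{corollario2}; your additional details merely unpack that theorem's proof.
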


\begin{proof}
The condition $(HK_2)$ implies volume doubling.
Let $s>0$ be an exponent in \eqref{eq05}.
By \eqref{eq:JH-exponential},
\[
T^{-s/2}H(T)^{1/(p-1)}
=
T^{-s/2}
\left(\frac{e^{qT}-1}{q}\right)^{1/(p-1)}
\longrightarrow\infty
\qquad\text{as }T\to\infty.
\]
Thus \eqref{hpBUcor2} holds with $\beta=2$,
and Theorem \ref{corollario2} applies.
\end{proof}

\begin{remark}
The same argument applies under $(HK_\beta)$ for any
admissible $\beta$.
In particular, exponential growth of the time coefficient
excludes global existence for every $p>1$ in this setting,
however small the nontrivial initial datum may be.
The argument uses a lower heat kernel bound and polynomial
volume growth; a Gaussian upper bound alone is insufficient.
\end{remark}

For the spectral blow-up criterion, suppose that
$\lambda_1:=\inf\sigma(L)>0$ and that, for every relatively
compact ball $B\subset X$ and every $\eta>0$, there exist
$c_{B,\eta}>0$ and $T_{B,\eta}>0$ such that
\begin{equation}\label{eq:spectral-exponential-lower}
\operatorname*{ess\,inf}_{(x,y)\in B\times B}K_t(x,y)
\ge c_{B,\eta}e^{-(\lambda_1+\eta)t}
\qquad\text{for every }t\ge T_{B,\eta}.
\end{equation}
The essential infimum is taken with respect to $\mu\otimes\mu$.
Unlike \eqref{E2}, this condition is imposed on every
relatively compact ball, so that the lower bound can be used
on a ball carrying positive initial mass.

\begin{theorem}\label{thm:spectral-exponential-blowup}
Assume \eqref{mainassumptions},
\eqref{case1}, and \eqref{eq:exponential-coefficients}.
Suppose that $\lambda_1>0$,
\eqref{eq:spectral-exponential-lower} holds, and
\begin{equation}\label{eq:spectral-exponential-blowup}
q>(p-1)\lambda_1.
\end{equation}
Then every nonnegative, nontrivial initial datum satisfying
the standing assumptions generates a maximal mild solution
that blows up in finite time.
\end{theorem}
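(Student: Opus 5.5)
The plan is to apply Corollary \ref{corteo1}(ii), or directly Theorem \ref{teo1} through Lemma \ref{lemma1}. Since $u_0$ is continuous, nonnegative and nontrivial, we choose $x_0\in X$, $r>0$ and $\delta>0$ with $u_0\ge\delta$ on $B_r(x_0)$. We then take $B:=B_r(x_0)$, which is relatively compact because closed bounded sets are compact under Definition \ref{MMS}, so $0<V(x_0,r)<\infty$.

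Next we fix $\eta>0$ small enough that
\[
\eta<\frac{q}{p-1}-\lambda_1,
\]
which is possible by \eqref{eq:spectral-exponential-blowup}. By \eqref{eq:spectral-exponential-lower}, for $t\ge T_{B,\eta}$,
\[
\underline{\mathcal K}(t)\ge c_{B,\eta}e^{-(\lambda_1+\eta)t},
\]
where $\underline{\mathcal K}$ is defined in \eqref{infKernel} for this ball.

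Lemma \ref{lemma1} then gives, for large $T$,
\[
\|P_Tu_0\|_{L^\infty(X,\mu)}\ge \delta V(x_0,r)c_{B,\eta}e^{-(\lambda_1+\eta)T}.
\]
Combining this with the asymptotics \eqref{eq:Jinverse-exponential}, we obtain
\[
\frac{\underline{\mathcal K}(T)}{J^{-1}(H(T))}\ge C\,e^{\left(\frac{q}{p-1}-\lambda_1-\eta\right)T}\longrightarrow\infty
\]
as $T\to\infty$, for some constant $C>0$ independent of $T$. Thus \eqref{hpBU} holds, and Corollary \ref{corteo1}(ii) yields finite-time blow-up. The conclusion $\lim_{t\uparrow T_{\max}}\|u(\cdot,t)\|_{L^\infty}=\infty$ then follows from Proposition \ref{prop:continuation}.

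Alternatively, we can verify \eqref{power-finite-time} directly: $(p-1)H(T)\|P_Tu_0\|_\infty^{p-1}$ is bounded below by a constant times $e^{(q-(p-1)(\lambda_1+\eta))T}$, which exceeds $1$ for large $T$.

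The only delicate point is the choice of $\eta$. The lower bound \eqref{eq:spectral-exponential-lower} loses an arbitrarily small $\eta$ in the exponent, so the strict inequality $q>(p-1)\lambda_1$ is exactly what absorbs this loss. A secondary point is that the ball must be relatively compact so that \eqref{eq:spectral-exponential-lower} applies to it; this holds automatically in our setting.
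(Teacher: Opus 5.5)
Your proposal is correct and follows essentially the same route as the paper. You choose a ball on which $u_0\ge\delta$, pick $\eta$ with $\lambda_1+\eta<q/(p-1)$, combine \eqref{eq:spectral-exponential-lower} with the explicit form of $J^{-1}(H(T))$ to obtain \eqref{hpBU}, and then invoke Corollary \ref{corteo1}. The detour through Lemma \ref{lemma1} and the alternative check of \eqref{power-finite-time} are harmless but unnecessary, since the ratio in \eqref{hpBU} involves $\underline{\mathcal K}(T)$ directly.
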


\begin{proof}
Choose a relatively compact ball $B=B_r(x_0)$ and
$\delta>0$ such that $u_0\ge\delta$ on $B$.
By \eqref{eq:spectral-exponential-blowup}, choose $\eta>0$ with
\[
\lambda_1+\eta<\frac q{p-1}.
\]
For every sufficiently large $T$,
\eqref{eq:spectral-exponential-lower} gives
\[
\frac{\underline{\mathcal K}(T)}{J^{-1}(H(T))}
\ge
c_{B,\eta}e^{-(\lambda_1+\eta)T}
\left[\frac{p-1}{q}(e^{qT}-1)\right]^{1/(p-1)}.
\]
The right-hand side tends to infinity, since
$q/(p-1)-\lambda_1-\eta>0$.
Thus \eqref{hpBU} holds.
The nonlinearity \eqref{case1} satisfies
\eqref{mainassumptions}, \eqref{Hf}, convexity, and monotonicity,
so Corollary \ref{corteo1} applies.
\end{proof}

We next give the complementary global existence result.

\begin{theorem}\label{thm:spectral-exponential-existence}
Assume \eqref{mainassumptions}, the kernel assumptions of
Theorem \ref{teo2}, the hypotheses of
Proposition \ref{prop:spectralkernel},
\eqref{case1}, and \eqref{eq:exponential-coefficients}.
Suppose that
\begin{equation}\label{eq:spectral-exponential-existence}
q<(p-1)\lambda_1.
\end{equation}
Choose $M>0$ such that
\begin{equation}\label{eq:spectral-exponential-smallness}
pM^{p-1}A_\gamma^{p-1}
\le\frac{(p-1)\lambda_1-q}{2}.
\end{equation}
If
\begin{equation}\label{eq:spectral-exponential-data}
0\le u_0(x)\le\varepsilon K_\gamma(x,y_0),
\qquad
0<\varepsilon\le\frac M2,
\end{equation}
then problem \eqref{problema} admits a unique global
nonnegative mild solution satisfying
\begin{equation}\label{eq:spectral-exponential-bound}
0\le u(x,t)\le MK_{t+\gamma}(x,y_0)
\end{equation}
for every $t\ge0$ and almost every $x\in X$.
Moreover,
\begin{equation}\label{eq:spectral-exponential-decay}
\|u(t)\|_{L^\infty(X,\mu)}
\le MA_\gamma e^{-\lambda_1t}
\qquad\text{for every }t\ge0.
\end{equation}
\end{theorem}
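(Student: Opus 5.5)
The plan is to reduce the statement to Theorem \ref{corGE1}, with $\omega\equiv1$, exactly as in the proof of Theorem \ref{thm:gap-power}. The only new ingredient is computing the integral $I_{p,\gamma}$ for the exponential coefficient and checking that the smallness condition \eqref{eq:spectral-exponential-smallness} is the right one.

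By Proposition \ref{prop:spectralkernel}, estimate \eqref{spectralshiftbound} gives $\overline{\mathcal K}(t+\gamma)\le A_\gamma e^{-\lambda_1 t}$. Hence
\[
I_{p,\gamma}\le A_\gamma^{p-1}\int_0^\infty e^{qt}e^{-(p-1)\lambda_1 t}\,dt=\frac{A_\gamma^{p-1}}{(p-1)\lambda_1-q},
\]
which is finite precisely because of \eqref{eq:spectral-exponential-existence}. In other words, this is the quantity $A_\gamma^{p-1}E_p$ of Theorem \ref{thm:gap-power}, with $E_p=((p-1)\lambda_1-q)^{-1}$.

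The step that needs care is matching \eqref{smallMpower}, namely $pM^{p-1}I_{p,\gamma}\le\frac12$. The stated condition \eqref{eq:spectral-exponential-smallness} gives
\[
pM^{p-1}I_{p,\gamma}\le \frac{pM^{p-1}A_\gamma^{p-1}}{(p-1)\lambda_1-q}\le\frac12 .
\]
So \eqref{smallMpower} holds. Alternatively, one can verify \eqref{LM} directly with $\delta=1/2$ by bounding
\[
\int_0^t h\,\sigma_M\,ds\le pM^{p-1}A_\gamma^{p-1}\int_0^\infty e^{-((p-1)\lambda_1-q)s}\,ds .
\]
Both routes use the fact that $\sigma_M(t)=pM^{p-1}\overline{\mathcal K}(t+\gamma)^{p-1}$ by \eqref{sigmaMcase1}. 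The data condition \eqref{eq:spectral-exponential-data} coincides with \eqref{smallinitialpower}.

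Theorem \ref{corGE1} then yields existence, uniqueness (among all nonnegative mild solutions, by Proposition \ref{prop:continuation}) and the bound \eqref{eq:spectral-exponential-bound}. The decay estimate \eqref{eq:spectral-exponential-decay} follows by combining \eqref{eq:spectral-exponential-bound} with \eqref{columnbound} and \eqref{spectralshiftbound}:
\[
\|u(t)\|_{L^\infty(X,\mu)}\le M\|K_{t+\gamma}(\cdot,y_0)\|_{L^\infty(X,\mu)}\le M\overline{\mathcal K}(t+\gamma)\le MA_\gamma e^{-\lambda_1 t}.
\]
There is no real obstacle here. The only point to watch is that the constants in the smallness condition combine to give the factor $\frac12$.
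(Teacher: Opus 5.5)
Your proposal is correct and is essentially the paper's proof. The paper computes $E_p=((p-1)\lambda_1-q)^{-1}$, observes that \eqref{eq:spectral-exponential-smallness} is exactly the smallness condition $pM^{p-1}A_\gamma^{p-1}E_p\le\frac12$ of Theorem~\ref{thm:gap-power}, and invokes that theorem, whose own proof is your reduction to Theorem~\ref{corGE1} with $\omega\equiv1$ together with the decay bound from \eqref{columnbound} and \eqref{spectralshiftbound}; you have simply inlined that intermediate step.
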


\begin{proof}
Condition \eqref{eq:spectral-exponential-existence} gives
\[
E_p
=
\int_0^\infty e^{-((p-1)\lambda_1-q)t}\,dt
=
\frac1{(p-1)\lambda_1-q}.
\]
In particular,
\begin{equation}\label{eq:spectral-exponential-integral}
I_{p,\gamma}
\le
\frac{A_\gamma^{p-1}}{(p-1)\lambda_1-q}
<\infty.
\end{equation}
The nonlinearity is the power considered in
Theorem \ref{thm:gap-power}, and
\eqref{eq:spectral-exponential-smallness} is precisely
the required smallness condition.
The conclusion follows from that theorem.
\end{proof}

\begin{remark}\label{rem:spectral-exponential-threshold}
Under the respective heat kernel assumptions, for
the nonlinearity \eqref{case1} and $h(t)=e^{qt}$,
the spectral criteria give
finite-time blow-up for every nonnegative, nontrivial
initial datum when
\[
1<p<1+\frac q{\lambda_1},
\]
and global existence for sufficiently small initial data when
\[
p>1+\frac q{\lambda_1}.
\]
At the borderline $q=(p-1)\lambda_1$, these exponential
estimates alone do not decide whether sufficiently small
nontrivial data generate global solutions.
Blow-up for sufficiently large data is still covered by
Remark \ref{rem:model-finite-time}.
Additional information on the heat kernel can settle
small-data existence at the borderline.
For example, suppose that
\[
\overline{\mathcal K}(t+\gamma)
\le
C_\gamma e^{-\lambda_1t}(1+t)^{-\alpha}
\qquad\text{for every }t\ge0
\]
for some $\alpha>0$.
When $q=(p-1)\lambda_1$,
\[
I_{p,\gamma}
\le
C_\gamma^{p-1}
\int_0^\infty(1+t)^{-\alpha(p-1)}\,dt
<\infty
\]
whenever $\alpha(p-1)>1$.
Under this additional assumption, Theorem \ref{corGE1}
gives global existence for sufficiently small initial data
dominated by a heat kernel column.
\end{remark}

\subsection{Examples of underlying spaces}

We now illustrate the geometric hypotheses used above.
Throughout this subsection, we retain the nonlinearity
\eqref{case1}, and, unless stated otherwise, we take
$h\equiv1$.
This coefficient satisfies \eqref{mainassumptions}.
The same holds when $h(t)=e^{qt}$, with $q>0$.
The nonlinearity assumptions of the blow-up and global
existence results invoked below have already been verified.
All initial data satisfy the standing assumptions.
All global existence conclusions concern sufficiently small
initial data satisfying the prescribed heat kernel majorization.

\subsubsection{Euclidean space and Riemannian manifolds}

\begin{example}[Euclidean space]
Let $X=\mathbb R^n$, $n\ge1$, with the usual Laplacian.
We first consider $h\equiv1$.
Since $\beta=2$, $V(x,R)=c_nR^n$, and $H(T)=T$,
condition \eqref{hpBUvolume} becomes
\[
\limsup_{T\to\infty}T^{-n/2+1/(p-1)}=\infty.
\]
Thus Theorem \ref{corollario2} gives finite-time blow-up
for every nonnegative, nontrivial initial datum satisfying
the standing assumptions if
\[
1<p<1+\frac2n.
\]
At the critical exponent $p=1+2/n$, the same conclusion
follows from Theorem \ref{thm:critical-volume}, with
$\beta=2$ and $D=n$, since the Euclidean heat kernel is
conservative.
Thus the blow-up range is
\[
1<p\le1+\frac2n.
\]
For global existence, still with $h\equiv1$, we use
\[
\overline{\mathcal K}(t)=(4\pi t)^{-n/2}.
\]
Hence
\[
I_{p,\gamma}
=
(4\pi)^{-n(p-1)/2}
\int_0^\infty(t+\gamma)^{-n(p-1)/2}\,dt
<\infty
\]
if and only if
\[
p>1+\frac2n.
\]
For these exponents, Theorem \ref{corGE1} gives global
existence for sufficiently small initial data satisfying
the prescribed heat kernel majorization.
Together, these conclusions recover the Fujita threshold
(see e.g. \cite{Fujita, H, KST, QS, Sug}).
Every nonnegative, nontrivial initial datum satisfying
the standing assumptions leads to finite-time blow-up
for $1<p\le1+2/n$, whereas sufficiently small initial data
satisfying the heat kernel majorization generate global
solutions for $p>1+2/n$.
If instead
\[
h(t)=e^{qt},\qquad q>0,
\]
then $h$ is continuous and strictly positive, as required by
\eqref{mainassumptions}.
Theorem \ref{thm:gaussian-exponential-blowup} therefore
gives finite-time blow-up for every $p>1$ and every
nonnegative, nontrivial initial datum satisfying the
standing assumptions.
\end{example}

\begin{example}[Nonnegative Ricci curvature]
Let $X$ be a complete connected Riemannian manifold of
dimension $n$ with nonnegative Ricci curvature, and let
$\Delta$ be its Laplace--Beltrami operator.
We first take $h\equiv1$.
The heat kernel is conservative and satisfies $(HK_2)$,
as recalled in Theorem \ref{thm:ricci}.
For each fixed $x_0$,
\[
V(x_0,R)\le CR^n.
\]
Theorem \ref{corollario2} gives blow-up for $1<p<1+2/n$.
Since the heat kernel is conservative,
Theorem \ref{thm:critical-volume}, with $\beta=2$ and $D=n$,
also covers the critical exponent.
Thus every nonnegative, nontrivial initial datum satisfying
the standing assumptions generates finite-time blow-up whenever
\[
1<p\le1+\frac2n.
\]
For global existence, still with $h\equiv1$,
assume additionally that, for some $d>0$,
\[
V(x,R)\ge cR^d
\qquad\text{uniformly in }x,\quad R\ge1.
\]
The Gaussian upper bound then gives
\[
\overline{\mathcal K}(t)\le Ct^{-d/2}
\qquad\text{for }t\ge1.
\]
The required finiteness at smaller positive times also holds.
Indeed, Bishop--Gromov volume comparison and the lower bound
on $V(x,1)$ imply
\[
V(x,r)\ge r^nV(x,1)\ge cr^n
\qquad\text{for }0<r\le1,
\]
and hence
\[
\overline{\mathcal K}(t)\le Ct^{-n/2}
\qquad\text{for }0<t\le1.
\]
Choosing $\gamma\ge1$, Theorem \ref{corGE1} therefore gives
global existence for sufficiently small initial data
satisfying the heat kernel majorization whenever
\[
p>1+\frac2d.
\]
If the large-scale volume growth is uniformly comparable
to $R^d$, Theorems \ref{corollario2} and
\ref{thm:critical-volume} give blow-up for
$1<p\le1+2/d$, while sufficiently small data satisfying
the heat kernel majorization generate global solutions
for $p>1+2/d$.
Concerning blow-up for $h\equiv1$, our conclusions are
in agreement with the results of \cite{MaMoPu, Zhang},
obtained by entirely different methods.
Those works, however, do not address global existence in general.
If instead
\[
h(t)=e^{qt},\qquad q>0,
\]
the assumptions on $h$ required by
Theorem \ref{thm:gaussian-exponential-blowup} are again
satisfied.
That theorem gives finite-time blow-up for every $p>1$
and every nonnegative, nontrivial initial datum satisfying
the standing assumptions, without the additional lower
volume growth assumption.
\end{example}

\begin{example}[Hyperbolic space and negatively curved manifolds]
We first consider $h\equiv1$, retaining the nonlinearity
\eqref{case1}.
Both the global existence and blow-up criteria below are
applicable under their respective heat kernel and data assumptions.

Let $X=\mathbb H^n$, $n\ge2$, with sectional curvature $-1$.
Its heat kernel is conservative and uniformly bounded at
every positive time, and
\[
\lambda_1(\mathbb H^n)=\frac{(n-1)^2}{4}>0.
\]
Theorem \ref{thm:gap-power} therefore gives global solutions
for every $p>1$ and sufficiently small initial data satisfying
the heat kernel majorization.
For every such exponent, sufficiently large multiples of
a fixed nonnegative, nontrivial profile satisfying the
standing assumptions lead to blow-up by
Remark \ref{rem:model-finite-time}.
More generally, with the same choices of $f$ and $h$,
let $X$ be a complete simply connected
Riemannian manifold of dimension $n\ge2$ satisfying
\[
-b^2\le\operatorname{Sec}_X\le-a^2<0
\]
for some $0<a\le b$.
Its heat semigroup is conservative, its heat kernel has
uniform bounds at positive times, and
\[
\lambda_1(X)\ge\frac{(n-1)^2a^2}{4}>0.
\]
The same small-data existence conclusion holds for every
$p>1$, while the finite-time criterion applies to
sufficiently large data.

For these spaces, if instead
\[
h(t)=e^{qt},\qquad q>0,
\]
the assumptions on $h$ in
Theorem \ref{thm:spectral-exponential-existence} are satisfied.
That theorem gives global existence for sufficiently small
initial data satisfying the heat kernel majorization whenever
\[
q<(p-1)\lambda_1.
\]
These conclusions are in agreement with the results
of \cite{BPT, Punzo}.
Further examples are provided by complete manifolds with
positive spectral bottom and a uniform Faber--Krahn inequality
supplying the required heat kernel bounds; see
\cite{GrilloMeglioliPunzo2023}.
For the nonlinearity \eqref{case1}, the same global existence
criteria apply with $h\equiv1$, or with $h(t)=e^{qt}$ when
$q<(p-1)\lambda_1$.
\end{example}

\subsubsection{Metric and discrete graphs}

Metric graphs are considered with the path metric, edge-length
measure, and Kirchhoff Laplacian introduced in Section \ref{mf}.
For the discrete examples below, we use instead the discrete
Dirichlet form framework of that section, replacing the geometric
assumption \eqref{mainassumptions}(i).
The local existence, continuation, finite-time blow-up, and
weighted global existence arguments remain applicable:
they use the semigroup properties, the kernel representation,
and the assumptions on $f$ and $h$, rather than the geodesic
structure of the underlying space.
Spatial integrals become sums, and almost-everywhere identities
become pointwise identities.
We do not impose the all-time estimate $(HK_\beta)$ on
the discrete graphs.

\begin{example}[Metric lattices]\label{ex:metric-lattice}
Let $X$ be the metric realization of the nearest-neighbour
lattice $\mathbb Z^d$, $d\ge1$, with every edge identified
with an interval of length one.
Equip $X$ with edge-length measure and the Kirchhoff Laplacian.
The heat semigroup is conservative, and the volume satisfies
\[
V(x,R)\asymp
\begin{cases}
R, & 0<R\le1,\\
R^d, & R\ge1,
\end{cases}
\]
uniformly in $x$.
Global volume doubling and a scale-invariant Poincar\'e
inequality yield $(HK_2)$; see \cite{Haeseler}.
Consequently,
\[
\overline{\mathcal K}(t)\asymp
\begin{cases}
t^{-1/2}, & 0<t\le1,\\
t^{-d/2}, & t\ge1.
\end{cases}
\]
For the nonlinearity \eqref{case1} and $h\equiv1$,
Theorems \ref{corollario2} and \ref{thm:critical-volume}
give finite-time blow-up whenever
\[
1<p\le1+\frac2d.
\]
Theorem \ref{corGE1} gives global existence for sufficiently
small initial data satisfying the heat kernel majorization
whenever
\[
p>1+\frac2d.
\]
These results are in accordance with those established in \cite{PT}.
For $h(t)=(1+t)^q$, $q>-1$, the corresponding strict
conditions are
\[
1<p<1+\frac{2(q+1)}d
\quad\text{for blow-up},
\qquad
p>1+\frac{2(q+1)}d
\quad\text{for small-data global existence}.
\]
Equality is included in the blow-up range when $q=0$;
the preceding criteria do not settle it for $q\ne0$.

For $h(t)=e^{qt}$, $q>0$, finite-time blow-up holds
for every $p>1$.

Thus the thresholds depend on the large-scale dimension $d$,
although the graph is locally one-dimensional.
\end{example}

\begin{example}[Discrete lattices]\label{ex:discrete-lattice}
Let $X=\mathbb Z^d$, $d\ge1$, with counting measure and
the unnormalized nearest-neighbour Laplacian
\[
(\Delta u)(x)=\sum_{|y-x|_1=1}\bigl(u(y)-u(x)\bigr).
\]
The associated semigroup is symmetric and conservative.
Its kernel satisfies (see e.g. \cite{Barlow, PuZu2})
\[
0\le K_t(x,y)\le1
\qquad(t\ge0),
\]
and
\begin{equation}\label{eq:discrete-lattice-upper}
\sup_{x,y\in\mathbb Z^d}K_t(x,y)
\le C(1+t)^{-d/2}.
\end{equation}
Moreover, for every fixed $A>0$, there exists $c_A>0$ such that
\begin{equation}\label{eq:discrete-lattice-lower}
K_t(x,y)\ge c_A t^{-d/2}
\qquad
\text{if }t\ge1,\quad |x-y|_1\le A\sqrt t.
\end{equation}

For $h(t)=(1+t)^q$, $q>-1$, the lower bound on a fixed
finite set and Remark \ref{rem:large-time-lower} give
finite-time blow-up when
\[
1<p<1+\frac{2(q+1)}d.
\]
By \eqref{eq:discrete-lattice-upper},
\[
I_{p,\gamma}
\le C_\gamma^{p-1}
\int_0^\infty(1+t)^{q-d(p-1)/2}\,dt<\infty
\]
when
\[
p>1+\frac{2(q+1)}d.
\]
The weighted global existence argument therefore gives
global solutions for sufficiently small initial data
dominated by a kernel column.

In the special case $h\equiv 1$, such results are in line with those established in \cite{LinWu1,MoPuSo,PuZu2}.
%

Finally, if $h(t)=e^{qt}$, $q>0$, the polynomial lower
bound on a fixed finite set gives blow-up for every $p>1$.
\end{example}

\begin{example}[Regular discrete trees]\label{ex:regular-tree}
Let $X=\mathbb T_k$ be the infinite $k$-regular tree,
$k\ge3$, with counting measure and
\[
(\Delta u)(x)=\sum_{y\sim x}\bigl(u(y)-u(x)\bigr).
\]
The semigroup is symmetric and conservative, its kernel
is bounded by one, and
\[
\lambda_1=k-2\sqrt{k-1}>0.
\]
The standard heat kernel estimates on homogeneous trees
also give, on every fixed finite ball $B$,
\[
\inf_{x,y\in B}K_t(x,y)
\ge c_{B,\eta}e^{-(\lambda_1+\eta)t}
\]
for every $\eta>0$ and all sufficiently large $t$;
see \cite{GrilloMeglioliPunzoGraphs}.

For the nonlinearity \eqref{case1}, the spectral global
existence criterion gives global solutions for sufficiently
small initial data for every $p>1$ whenever $h$ has at most
polynomial growth.

If $h(t)=e^{qt}$, $q>0$, the spectral criteria give
\[
q<(p-1)\lambda_1
\quad\text{for small-data global existence},
\]
and
\[
q>(p-1)\lambda_1
\quad\text{for blow-up of every admissible nontrivial datum}.
\]
These strict conditions do not settle equality.
Sufficiently large multiples of a fixed nonnegative,
nontrivial profile generate blow-up in all these regimes. Such results are in agreement with those established in \cite{GrilloMeglioliPunzoGraphs}.
\end{example}

\section{Model cases: nonlinearities beyond powers}
\label{model-beyond-powers}
\setcounter{equation}{0}

We extend the preceding applications to sums of powers and
exponential nonlinearities.
As in Section \ref{model}, we first establish general criteria,
then specialize the time coefficient, and finally consider
examples of underlying spaces.
Throughout this section, we retain the standing assumptions
\eqref{mainassumptions} and the semigroup framework fixed above.
All global existence statements also retain the kernel
assumptions and conventions of Theorem \ref{teo2}, including
\eqref{columnsemigroup} and \eqref{columnbound}.
Small initial data are understood through the majorization
\[
0\le u_0(x)\le\varepsilon K_\gamma(x,y_0)
\]
for sufficiently small $\varepsilon>0$.
All blow-up conclusions concern admissible nonnegative,
nontrivial initial data.

We consider the following four nonlinearities, each defined
for every $v\ge0$:
\begin{equation}\label{eq:sum-powers}
f(v)=v^p+v^r,
\qquad 1<p<r;
\end{equation}
\begin{equation}\label{eq:linear-plus-power}
f(v)=a_0v+bv^p,
\qquad a_0,b>0,\quad p>1;
\end{equation}
\begin{equation}\label{eq:exponential-power-nonlinearity}
f(v)=e^{v^p}-1,
\qquad p>1;
\end{equation}
\begin{equation}\label{eq:exponential-linear-nonlinearity}
f(v)=c(e^{bv}-1),
\qquad b,c>0.
\end{equation}
The case $a_0=0$ in \eqref{eq:linear-plus-power} reduces
to the power nonlinearity of Section \ref{model},
with $h$ replaced by $bh$.
The applications below are organized according to the
behavior of the nonlinearity near zero.
For \eqref{eq:sum-powers} and
\eqref{eq:exponential-power-nonlinearity},
\[
f(v)\sim v^p
\qquad\text{as }v\downarrow0,
\]
and the relevant criteria involve the exponent $p$
and the decay of the heat kernel.
For \eqref{eq:linear-plus-power} and
\eqref{eq:exponential-linear-nonlinearity},
\[
f(v)\sim av
\qquad\text{as }v\downarrow0,
\]
where $a=a_0$ and $a=cb$, respectively.
In these cases, the criteria involve the interaction
between the linear slope $a$, the time coefficient,
and the spectral bottom.
The properties needed for these applications are verified
below in two propositions, one for sums of powers
and one for exponential nonlinearities.

\subsection{General criteria}

We begin with the extension of the power blow-up arguments.

\begin{remark}\label{rem:general-power-blowup}
Suppose that $f$ is nondecreasing and convex and that,
for some $p>1$,
\[
f(v)\ge v^p
\qquad\text{for every }v\ge0.
\]
Then \eqref{Hf} holds and
\[
J(a)=\int_a^\infty\frac{dv}{f(v)}
\le\frac{a^{1-p}}{p-1}
\qquad(a>0).
\]
Hence \eqref{power-finite-time} remains sufficient for blow-up
by Theorem \ref{teo1}.
In particular, the arguments of Theorem \ref{corollario2},
Remark \ref{rem:large-time-lower}, and the exponential-time
blow-up results extend to $f$ under their respective
geometric and time-coefficient assumptions.
This also includes the critical autonomous case of
Theorem \ref{thm:critical-volume}.
Indeed, the bound on $J$ yields the same restarted estimate
\eqref{critical-restarted-bound}, and hence the uniform
mass bound \eqref{critical-uniform-mass}.
Conservativity and the mild formulation give
\[
\begin{aligned}
\int_Xu(x,t)\,d\mu(x)
&=\int_Xu_0(x)\,d\mu(x)
  +\int_0^t\int_Xf(u(x,s))\,d\mu(x)\,ds\\
&\ge\int_Xu_0(x)\,d\mu(x)
  +\int_0^t\int_Xu(x,s)^p\,d\mu(x)\,ds.
\end{aligned}
\]
Since \eqref{critical-reaction-lower} still follows from
$u(t)\ge P_tu_0$, the same logarithmic contradiction proves
finite-time blow-up.
\end{remark}

For global existence, the following criterion requires
quantitative control only near zero.

\begin{proposition}\label{rem:localpower}
Assume the standing global existence hypotheses.
Suppose that there exist $r_0>0$, $C_f>0$, and $p>1$ such that
\begin{equation}\label{eq:local-power-control}
\operatorname{Lip}(f;[0,r])
\le C_fr^{p-1}
\qquad\text{for every }0<r\le r_0,
\end{equation}
where $\operatorname{Lip}(f;[0,r])$ denotes the optimal
Lipschitz constant of $f$ on $[0,r]$.
If
\[
I_{p,\gamma}
=
\int_0^\infty
h(t)\overline{\mathcal K}(t+\gamma)^{p-1}\,dt<\infty,
\]
then sufficiently small initial data generate a unique
global nonnegative mild solution.
More precisely, choose $M>0$ such that
\[
M\overline{\mathcal K}(\gamma)\le r_0,
\qquad
C_fM^{p-1}I_{p,\gamma}\le\frac12.
\]
For initial data satisfying \eqref{smallinitialpower},
the solution obeys
\[
0\le u(x,t)\le MK_{t+\gamma}(x,y_0)
\]
for every $t\ge0$ and almost every $x\in X$.
\end{proposition}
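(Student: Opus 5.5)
We apply Theorem \ref{teo2} with $\omega\equiv1$ and $\delta=1/2$. The task is to bound $\sigma_M(t)$ using the local Lipschitz control \eqref{eq:local-power-control}. That control holds only on radii up to $r_0$, so the first step is to keep the range $R_M(t)=M\overline{\mathcal K}(t+\gamma)$ inside $[0,r_0]$.

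The main obstacle is that $\overline{\mathcal K}(t+\gamma)\le\overline{\mathcal K}(\gamma)$ is not recorded for the global supremum. It does hold, however, for the column $K_{t+\gamma}(\cdot,y_0)$: by \eqref{columnsemigroup} with $s=0$ and $L^\infty$-contractivity,
\[
\|K_{t+\gamma}(\cdot,y_0)\|_{L^\infty}\le\|K_\gamma(\cdot,y_0)\|_{L^\infty}\le\overline{\mathcal K}(\gamma).
\]
For the full kernel I will use the semigroup identity $K_{t+\gamma}(x,y)=\int K_t(x,z)K_\gamma(z,y)\,d\mu(z)$ together with sub-Markovianity. This gives $K_{t+\gamma}\le\overline{\mathcal K}(\gamma)$ almost everywhere, so $\overline{\mathcal K}$ is nonincreasing. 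Hence
\[
R_M(t)\le M\overline{\mathcal K}(\gamma)\le r_0 \qquad\text{for all } t\ge0.
\]
As a fallback that avoids the monotonicity of $\overline{\mathcal K}$, I could define the radius through the column norm $k(t)$, in the spirit of Remark \ref{rem05}, and rerun the proofs of Lemma \ref{lemma-psi} and Proposition \ref{prop-contraction}. Those proofs only use that $u(z,s)$ lies in $[0,MK_{s+\gamma}(z,y_0)]$ and that $MK_{s+\gamma}(z,y_0)\le M\min\{k(s),\overline{\mathcal K}(s+\gamma)\}$, and both bounds are available.

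With the range inside $[0,r_0]$, hypothesis \eqref{eq:local-power-control} applied with $r=R_M(t)$ gives, whenever $R_M(t)>0$,
\[
\sigma_M(t)=\operatorname{Lip}(f;[0,R_M(t)])\le C_fM^{p-1}\overline{\mathcal K}(t+\gamma)^{p-1}.
\]
When $R_M(t)=0$ we have $\sigma_M(t)=0$ by convention. Integrating,
\[
\int_0^t\sigma_M(s)h(s)\,ds\le C_fM^{p-1}I_{p,\gamma}\le\tfrac12,
\]
which is \eqref{LM} with $\omega\equiv1$ and $\delta=1/2$. Condition \eqref{smallinitialpower} gives \eqref{eq55} with $\varepsilon\le M/2=(1-\delta)M$, which is \eqref{eq54}.

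Theorem \ref{teo2} then yields a unique global nonnegative mild solution with $0\le u\le MK_{t+\gamma}(\cdot,y_0)$. Uniqueness holds in the whole class of locally bounded mild solutions, as stated there.
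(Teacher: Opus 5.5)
Your proposal is correct and follows essentially the same route as the paper's proof. You apply Theorem \ref{teo2} with $\omega\equiv1$ and $\delta=1/2$, and you use the semigroup identity with sub-Markovianity to get $\overline{\mathcal K}(t+\gamma)\le\overline{\mathcal K}(\gamma)$, hence $R_M(t)\le r_0$; the local power control then bounds $\sigma_M$, which gives \eqref{LM}, and the column-norm fallback is not needed.
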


\begin{proof}
The semigroup property and sub-Markovianity imply
\[
\begin{aligned}
K_{t+s}(x,y)
&=\int_XK_s(x,z)K_t(z,y)\,d\mu(z)\\
&\le
\overline{\mathcal K}(t)
\int_XK_s(x,z)\,d\mu(z)
\le\overline{\mathcal K}(t)
\end{aligned}
\]
for every $s,t>0$ and almost every $(x,y)$.
Consequently,
\[
\overline{\mathcal K}(t+\gamma)
\le\overline{\mathcal K}(\gamma)
\qquad(t\ge0).
\]
With $\omega\equiv1$, the chosen $M$ therefore gives
\[
R_M(t)\le r_0,
\qquad
\sigma_M(t)
\le
C_fM^{p-1}\overline{\mathcal K}(t+\gamma)^{p-1}.
\]
Thus \eqref{LM} holds with $\delta=1/2$,
and Theorem \ref{teo2} applies.
\end{proof}

A positive spectral bottom also allows a nonzero linear part
near the origin.
The next criterion uses a time-dependent weight and does not
require convexity or monotonicity.

\begin{theorem}\label{thm:gap-general}
Assume the standing global existence hypotheses and
the hypotheses of Proposition \ref{prop:spectralkernel}.
Suppose that there exist $r_0>0$ and $\ell\ge0$ such that
\begin{equation}\label{locallipschitzgap}
\operatorname{Lip}(f;[0,r_0])\le\ell.
\end{equation}
Assume that, for some $\nu>\ell$,
\begin{equation}\label{eq:general-time-gap}
B_\nu:=
\sup_{t\ge0}e^{\nu H(t)-\lambda_1t}<\infty.
\end{equation}
Then sufficiently small initial data generate a unique
global nonnegative mild solution.

More precisely, choose $\delta\in(\ell/\nu,1)$ and
$M,\varepsilon>0$ such that
\[
MA_\gamma B_\nu\le r_0,
\qquad
\varepsilon\le(1-\delta)M.
\]
If
\[
0\le u_0(x)\le\varepsilon K_\gamma(x,y_0),
\]
then
\[
0\le u(x,t)\le
Me^{\nu H(t)}K_{t+\gamma}(x,y_0)
\]
for every $t\ge0$ and almost every $x\in X$.
Moreover,
\[
\|u(t)\|_{L^\infty(X,\mu)}
\le MA_\gamma e^{\nu H(t)-\lambda_1t}.
\]
\end{theorem}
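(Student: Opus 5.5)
The plan is to apply Theorem \ref{teo2} with the time-dependent weight
\[
\omega(t):=e^{\nu H(t)},
\]
which is continuous, satisfies $\omega\ge1$, and is nondecreasing because $h>0$; thus \eqref{omega} holds with $\omega(0)=1$. The pole $y_0$ and the shift $\gamma$ are those of the standing hypotheses, so \eqref{columnsemigroup} and \eqref{columnbound} are available. The initial condition \eqref{eq55} then reads $u_0\le\varepsilon K_\gamma(\cdot,y_0)$, and \eqref{eq54} is exactly $\varepsilon\le(1-\delta)M$. Everything therefore reduces to verifying \eqref{LM} for this weight.

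First we control the radius $R_M(t)=M\omega(t)\overline{\mathcal K}(t+\gamma)$. Proposition \ref{prop:spectralkernel} gives $\overline{\mathcal K}(t+\gamma)\le A_\gamma e^{-\lambda_1t}$, and hence
\[
R_M(t)\le MA_\gamma e^{\nu H(t)-\lambda_1 t}\le MA_\gamma B_\nu\le r_0 .
\]
Since $f$ has Lipschitz constant at most $\ell$ on $[0,r_0]$ by \eqref{locallipschitzgap}, and $[0,R_M(t)]\subseteq[0,r_0]$, we get $\sigma_M(t)\le\ell$ for every $t\ge0$.

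Next we check \eqref{LM}. The integral can be computed exactly:
\[
\int_0^t\sigma_M(s)h(s)e^{\nu H(s)}\,ds\le\ell\int_0^t h(s)e^{\nu H(s)}\,ds=\frac{\ell}{\nu}\bigl(e^{\nu H(t)}-1\bigr)\le\delta\,\omega(t).
\]
The last inequality holds because $\delta>\ell/\nu$. The case $\ell=0$ is trivial, and $\delta\in(0,1)$ is admissible since $\nu>\ell$. Theorem \ref{teo2} then yields a unique global nonnegative mild solution satisfying the bound $u\le Me^{\nu H(t)}K_{t+\gamma}(\cdot,y_0)$. Uniqueness holds in the full class of mild solutions, as Theorem \ref{teo2} states.

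Finally, the $L^\infty$ estimate follows from this pointwise bound combined with \eqref{columnbound} and \eqref{spectralshiftbound}. The only genuinely delicate point is the choice of weight: it must make the Volterra-type integral in \eqref{LM} telescope while keeping $R_M$ inside $[0,r_0]$. The condition $B_\nu<\infty$ is precisely what allows the growth of $\omega$ to be absorbed by the spectral decay of the kernel.
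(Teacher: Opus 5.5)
Your proposal is correct and follows the paper's proof essentially step for step. The paper also applies Theorem~\ref{teo2} with $\omega(t)=e^{\nu H(t)}$ and uses \eqref{spectralshiftbound} to get $R_M(t)\le MA_\gamma B_\nu\le r_0$, hence $\sigma_M\le\ell$; it then verifies \eqref{LM} through the exact integral $\frac{\ell}{\nu}\bigl(e^{\nu H(t)}-1\bigr)\le\delta\,\omega(t)$ and reads off the $L^\infty$ bound from \eqref{columnbound} and \eqref{spectralshiftbound}.
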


\begin{proof}
Apply Theorem \ref{teo2} with
\[
\omega(t)=e^{\nu H(t)}.
\]
This weight satisfies \eqref{omega}.
By \eqref{spectralshiftbound},
\[
R_M(t)
=
Me^{\nu H(t)}\overline{\mathcal K}(t+\gamma)
\le
MA_\gamma e^{\nu H(t)-\lambda_1t}
\le r_0.
\]
Hence $\sigma_M(t)\le\ell$, and
\[
\begin{aligned}
\int_0^t\sigma_M(s)h(s)\omega(s)\,ds
&\le
\ell\int_0^t h(s)e^{\nu H(s)}\,ds\\
&=\frac{\ell}{\nu}
  \bigl(e^{\nu H(t)}-1\bigr)
\le\delta\omega(t).
\end{aligned}
\]
Condition \eqref{LM} follows.
Since $\omega(0)=1$, the initial majorization is precisely
the one required by Theorem \ref{teo2}.
The norm estimate follows from \eqref{columnbound}
and \eqref{spectralshiftbound}.
\end{proof}

We now verify the properties needed to apply these criteria
to the four nonlinearities introduced above.

\begin{remark}[Sums of powers]
\label{prop:properties-sums}
The nonlinearities in \eqref{eq:sum-powers} and
\eqref{eq:linear-plus-power} satisfy the standing assumptions
on $f$. Indeed, both belong to $C^1([0,\infty))$, vanish at
zero, are positive on $(0,\infty)$, and are nondecreasing
and convex. Their power growth at infinity also ensures
\eqref{Hf}.

For \eqref{eq:sum-powers}, we have
\[
f(v)\ge v^p\qquad(v\ge0),
\]
and, for every $r_0>0$,
\[
\operatorname{Lip}(f;[0,\rho])
=p\rho^{p-1}+r\rho^{r-1}
\le \bigl(p+rr_0^{r-p}\bigr)\rho^{p-1}
\qquad(0<\rho\le r_0).
\]
Thus Remark \ref{rem:general-power-blowup} and
Proposition \ref{rem:localpower} apply with exponent $p$.

For \eqref{eq:linear-plus-power}, differentiation gives
\begin{equation}\label{eq:linear-power-lip}
\operatorname{Lip}(f;[0,r_0])
=a_0+bp r_0^{p-1},
\end{equation}
while direct integration yields
\begin{equation}\label{eq:linear-power-J}
J(z)
=
\frac1{a_0(p-1)}
\log\left(1+\frac{a_0}{bz^{p-1}}\right).
\end{equation}
Consequently,
\begin{equation}\label{eq:linear-power-Jinverse}
\begin{aligned}
J^{-1}(s)
&=
\left[
\frac{a_0}{b\bigl(e^{a_0(p-1)s}-1\bigr)}
\right]^{1/(p-1)}\\
&\sim
\left(\frac{a_0}{b}\right)^{1/(p-1)}e^{-a_0s}
\qquad\text{as }s\to\infty.
\end{aligned}
\end{equation}
\end{remark}

\begin{remark}[Exponential nonlinearities]
\label{prop:properties-exponentials}
The nonlinearities in \eqref{eq:exponential-power-nonlinearity}
and \eqref{eq:exponential-linear-nonlinearity} satisfy the
standing assumptions on $f$. Indeed, both belong to
$C^1([0,\infty))$, vanish at zero, are positive on
$(0,\infty)$, and are nondecreasing and convex.
Their exponential growth at infinity also ensures \eqref{Hf}.

For \eqref{eq:exponential-power-nonlinearity}, the inequality
$e^z-1\ge z$ for $z\ge0$ gives
\[
f(v)\ge v^p\qquad(v\ge0),
\]
and, for every $r_0>0$,
\[
\operatorname{Lip}(f;[0,\rho])
=p\rho^{p-1}e^{\rho^p}
\le p e^{r_0^p}\rho^{p-1}
\qquad(0<\rho\le r_0).
\]
Thus Remark \ref{rem:general-power-blowup} and
Proposition \ref{rem:localpower} apply with exponent $p$.

For \eqref{eq:exponential-linear-nonlinearity},
differentiation gives
\begin{equation}\label{eq:exponential-linear-lip}
\operatorname{Lip}(f;[0,r_0])=cb e^{br_0},
\end{equation}
while direct integration yields
\begin{equation}\label{eq:exponential-linear-J}
J(z)=-\frac1{cb}\log(1-e^{-bz}).
\end{equation}
Consequently,
\begin{equation}\label{eq:exponential-linear-Jinverse}
J^{-1}(s)
=
-\frac1b\log(1-e^{-cb s})
\sim\frac1b e^{-cb s}
\qquad\text{as }s\to\infty.
\end{equation}
\end{remark}

Thus \eqref{eq:sum-powers} and
\eqref{eq:exponential-power-nonlinearity} are covered
by the power-type criteria, while
\eqref{eq:linear-plus-power} and
\eqref{eq:exponential-linear-nonlinearity} are governed
by the interaction between their linear parts,
the cumulative time coefficient $H$, and the spectral decay.
For the latter two nonlinearities, we use the notation
\begin{equation}\label{eq:linear-slope-notation}
a:=f'(0)=
\begin{cases}
a_0, & \text{for \eqref{eq:linear-plus-power}},\\
cb, & \text{for \eqref{eq:exponential-linear-nonlinearity}}.
\end{cases}
\end{equation}

\subsection{The role of the time coefficient}

We first consider the two nonlinearities with superlinear
behavior at the origin.

\begin{corollary}[Polynomial time coefficients]
\label{cor:beyond-polynomial-time}
Let $f$ be given by \eqref{eq:sum-powers} or
\eqref{eq:exponential-power-nonlinearity}, and let
\[
h(t)=(1+t)^q,\qquad q>-1.
\]
Assume $(HK_\beta)$ and
\[
V(x_0,R)\le CR^d
\]
for some $d>0$ and all sufficiently large $R$, with $x_0$
chosen on a ball carrying positive initial mass.
Then finite-time blow-up holds whenever
\[
1<p<1+\frac{\beta(q+1)}d.
\]
If $q=0$ and the heat kernel is conservative,
the blow-up conclusion also holds at equality.
For global existence, assume instead the standing kernel
hypotheses and
\[
\overline{\mathcal K}(t+\gamma)
\le C(1+t)^{-\kappa}
\qquad(t\ge0)
\]
for some $\kappa>0$.
Then sufficiently small initial data generate global
solutions whenever
\[
p>1+\frac{q+1}{\kappa}.
\]
If the hypotheses of Proposition \ref{prop:spectralkernel}
hold, sufficiently small data generate global solutions
for every $p>1$ and every $q>-1$.
More generally, this last conclusion holds whenever $h$
has at most polynomial growth.
\end{corollary}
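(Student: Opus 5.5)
The corollary follows by combining Remark \ref{rem:general-power-blowup} and Proposition \ref{rem:localpower} with the computations already carried out for pure powers. The properties of both nonlinearities were checked in Remarks \ref{prop:properties-sums} and \ref{prop:properties-exponentials}. For \eqref{eq:sum-powers} and \eqref{eq:exponential-power-nonlinearity}, $f$ is nondecreasing and convex, satisfies \eqref{Hf}, and obeys $f(v)\ge v^p$. Moreover, for each fixed $r_0>0$, it satisfies the local bound \eqref{eq:local-power-control} with $C_f=p+rr_0^{r-p}$, respectively $C_f=pe^{r_0^p}$.

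\emph{Blow-up.} Since $J(a)\le a^{1-p}/(p-1)$, Remark \ref{rem:general-power-blowup} shows that \eqref{power-finite-time} is sufficient for blow-up. We then repeat the proof of Theorem \ref{corollario2}. Under $(HK_\beta)$ and the volume bound at $x_0$, we have $\|P_Tu_0\|_{L^\infty}\ge A/V(x_0,T^{1/\beta})\ge A'T^{-d/\beta}$ for large $T$. Combining this with $H(T)\sim T^{q+1}/(q+1)$ gives
\[
(p-1)H(T)\|P_Tu_0\|_{L^\infty}^{p-1}\ge cT^{q+1-d(p-1)/\beta}\to\infty
\]
whenever $p<1+\beta(q+1)/d$. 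For the equality case with $q=0$ and a conservative kernel, we invoke the critical extension in Remark \ref{rem:general-power-blowup}, taking $D=d$ in Theorem \ref{thm:critical-volume}. One point needs care: $x_0$ must carry positive initial mass, whereas the volume bound may be stated at another center. As in the remark following Theorem \ref{thm:critical-volume}, we transfer the bound between centers by inclusion of balls, $V(x_1,R)\le V(x_0,R+d(x_0,x_1))\le C'R^d$.

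\emph{Global existence, polynomial decay.} We apply Proposition \ref{rem:localpower}. It only remains to check $I_{p,\gamma}<\infty$, and
\[
I_{p,\gamma}\le C^{p-1}\int_0^\infty(1+t)^{q-\kappa(p-1)}\,dt,
\]
which is finite exactly when $\kappa(p-1)>q+1$. The proposition then gives $M$ with $M\overline{\mathcal K}(\gamma)\le r_0$ and $C_fM^{p-1}I_{p,\gamma}\le1/2$. Data satisfying \eqref{smallinitialpower} produce the global solution.

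\emph{Global existence, spectral case.} Under the hypotheses of Proposition \ref{prop:spectralkernel}, estimate \eqref{spectralshiftbound} gives
\[
I_{p,\gamma}\le A_\gamma^{p-1}\int_0^\infty h(t)e^{-(p-1)\lambda_1t}\,dt.
\]
This is finite for every $p>1$ whenever $h(t)\le C(1+t)^m$, since $\lambda_1>0$. In particular it covers $h=(1+t)^q$ with $q>-1$. Proposition \ref{rem:localpower} then applies as before.

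I expect no real obstacle here, since every step reduces to an earlier statement. The only delicate points are the choice of center in the blow-up part and checking that the constant $C_f$ in \eqref{eq:local-power-control} is fixed independently of $M$. The latter holds because $r_0$ is chosen first and $M$ afterwards.
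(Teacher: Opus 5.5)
Your proposal is correct and follows essentially the same route as the paper. Blow-up comes from Remark~\ref{rem:general-power-blowup} combined with the arguments of Theorems~\ref{corollario2} and \ref{thm:critical-volume}, and global existence comes from Proposition~\ref{rem:localpower} after bounding $I_{p,\gamma}$ by the same polynomial and spectral integrals. Your added care about the choice of center and about fixing $r_0$ (hence $C_f$) before $M$ is consistent with the paper's brief proof, and slightly more explicit.
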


\begin{proof}
The blow-up statements follow from
Remark \ref{rem:general-power-blowup} and the power results
of Section \ref{model}.
Under the polynomial upper kernel bound,
\[
I_{p,\gamma}
\le C^{p-1}
\int_0^\infty(1+t)^{q-\kappa(p-1)}\,dt<\infty
\]
when $\kappa(p-1)>q+1$.
Under the spectral estimate,
\[
I_{p,\gamma}
\le A_\gamma^{p-1}
\int_0^\infty h(t)e^{-(p-1)\lambda_1t}\,dt<\infty
\]
for every $p>1$ if $h$ has at most polynomial growth.
Proposition \ref{rem:localpower} gives global existence.
\end{proof}

\begin{corollary}[Exponentially increasing time coefficients]
\label{cor:beyond-exponential-time}
Let $f$ be given by \eqref{eq:sum-powers} or
\eqref{eq:exponential-power-nonlinearity}, and let
\[
h(t)=e^{qt},\qquad q>0.
\]
Under $(HK_\beta)$, finite-time blow-up holds for every
$p>1$.
Under the standing global existence hypotheses and those
of Proposition \ref{prop:spectralkernel}, sufficiently
small initial data generate global solutions if
\[
q<(p-1)\lambda_1.
\]
If $\lambda_1>0$ and
\eqref{eq:spectral-exponential-lower} holds, finite-time
blow-up occurs for every admissible nontrivial initial
datum if
\[
q>(p-1)\lambda_1.
\]
At equality, the additional estimate
\[
\overline{\mathcal K}(t+\gamma)
\le C_\gamma e^{-\lambda_1t}(1+t)^{-\alpha},
\qquad t\ge0,
\]
gives small-data global existence provided
$\alpha(p-1)>1$.
\end{corollary}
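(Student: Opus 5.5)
The corollary has four parts, and I will derive each from results already in the paper. The bridge is that both nonlinearities \eqref{eq:sum-powers} and \eqref{eq:exponential-power-nonlinearity} satisfy $f(v)\ge v^p$, are convex and nondecreasing, and obey the local Lipschitz bound \eqref{eq:local-power-control}. These facts are recorded in Remarks \ref{prop:properties-sums} and \ref{prop:properties-exponentials}. Hence every blow-up statement reduces to the power case through Remark \ref{rem:general-power-blowup}, and every global existence statement reduces to the finiteness of $I_{p,\gamma}$ through Proposition \ref{rem:localpower}.

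\emph{Blow-up under $(HK_\beta)$.} By Remark \ref{rem:general-power-blowup}, $J(a)\le a^{1-p}/(p-1)$, so \eqref{power-finite-time} suffices. I then repeat the proof of Theorem \ref{thm:gaussian-exponential-blowup} with general $\beta$. Since $(HK_\beta)$ gives volume doubling, \eqref{eq05} holds for some $s>0$. With $H(T)=(e^{qT}-1)/q$ we have $T^{-s/\beta}H(T)^{1/(p-1)}\to\infty$, so \eqref{hpBUcor2} holds. The argument of Theorem \ref{corollario2} then goes through; it uses only \eqref{power-finite-time}, which remains sufficient here.

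\emph{Global existence for $q<(p-1)\lambda_1$.} Proposition \ref{prop:spectralkernel} gives
\[
I_{p,\gamma}\le A_\gamma^{p-1}\int_0^\infty e^{-((p-1)\lambda_1-q)t}\,dt<\infty .
\]
Proposition \ref{rem:localpower} then applies, using the constants $C_f=p+r r_0^{r-p}$ or $C_f=pe^{r_0^p}$ from the two remarks.

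\emph{Blow-up for $q>(p-1)\lambda_1$.} First choose $\eta$ with $\lambda_1+\eta<q/(p-1)$. Using $J^{-1}(s)\le[(p-1)s]^{-1/(p-1)}$, which holds because $J$ is decreasing and bounded by the power $J$, the ratio $\underline{\mathcal K}(T)/J^{-1}(H(T))$ is bounded below by the same quantity as in the proof of Theorem \ref{thm:spectral-exponential-blowup}. That quantity tends to infinity, so \eqref{hpBU} holds. Corollary \ref{corteo1} applies since $f$ is convex and nondecreasing and satisfies \eqref{Hf}.

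\emph{Equality $q=(p-1)\lambda_1$.} The extra kernel bound gives $I_{p,\gamma}\le C_\gamma^{p-1}\int_0^\infty(1+t)^{-\alpha(p-1)}\,dt$, which is finite when $\alpha(p-1)>1$. Proposition \ref{rem:localpower} again gives global existence.

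The only delicate point is the monotonicity step used in the third part. One needs $J_f\le J_{\mathrm{pow}}$ to give $J_f^{-1}\le J_{\mathrm{pow}}^{-1}$, and this holds because both functions are strictly decreasing bijections of $(0,\infty)$. Everything else is a direct substitution into earlier results.
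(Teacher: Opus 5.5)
Your proposal is correct and follows the paper's proof. Both blow-up claims are reduced to the power case via Remark \ref{rem:general-power-blowup}, where you additionally spell out the inversion $J_f^{-1}\le J_{\mathrm{pow}}^{-1}$ that the paper leaves implicit. Both global existence claims follow from bounding $I_{p,\gamma}$, by $A_\gamma^{p-1}\int_0^\infty e^{-((p-1)\lambda_1-q)t}\,dt$ or by $C_\gamma^{p-1}\int_0^\infty(1+t)^{-\alpha(p-1)}\,dt$, and then applying Proposition \ref{rem:localpower}.
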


\begin{proof}
The blow-up conclusions follow from
Remark \ref{rem:general-power-blowup}.
For global existence,
\[
I_{p,\gamma}
\le A_\gamma^{p-1}
\int_0^\infty
e^{-((p-1)\lambda_1-q)t}\,dt<\infty
\]
when $q<(p-1)\lambda_1$.
At equality, the additional kernel estimate gives
\[
I_{p,\gamma}
\le C_\gamma^{p-1}
\int_0^\infty(1+t)^{-\alpha(p-1)}\,dt<\infty
\]
when $\alpha(p-1)>1$.
Apply Proposition \ref{rem:localpower}.
\end{proof}

We next turn to the nonlinearities with a positive linear
part. Their inverse functions $J^{-1}$ have exponential
decay, as shown in
\eqref{eq:linear-power-Jinverse} and
\eqref{eq:exponential-linear-Jinverse}.

\begin{proposition}[Time coefficients in the presence of a linear part]
\label{prop:linear-part-time}
Let $f$ be given by \eqref{eq:linear-plus-power} or
\eqref{eq:exponential-linear-nonlinearity}, and let $a>0$
be defined by \eqref{eq:linear-slope-notation}.

\begin{enumerate}
\item
Suppose that, on every relatively compact ball $B$,
there exist $c_B>0$, $\kappa_B>0$, and $T_B>0$ such that
\[
K_T(x,y)\ge c_BT^{-\kappa_B}
\]
for every $T\ge T_B$ and almost every $(x,y)\in B\times B$.
Then finite-time blow-up holds if
\[
h(t)=(1+t)^q,\quad q>-1,
\qquad\text{or}\qquad
h(t)=e^{qt},\quad q>0.
\]

\item
Under the standing global existence hypotheses and
the hypotheses of Proposition \ref{prop:spectralkernel},
sufficiently small initial data generate global solutions
if
\[
h\equiv1,\qquad a<\lambda_1,
\]
or if
\[
h(t)=(1+t)^q,\qquad -1<q<0.
\]
In the latter case, no restriction on $a$ relative
to $\lambda_1$ is required.

\item
Suppose that $\lambda_1>0$ and
\eqref{eq:spectral-exponential-lower} holds.
Then finite-time blow-up occurs if
\[
h\equiv1,\qquad a>\lambda_1.
\]
It also occurs, for every $a>0$, if
\[
h(t)=(1+t)^q,\quad q>0,
\qquad\text{or}\qquad
h(t)=e^{qt},\quad q>0.
\]
\end{enumerate}
The autonomous borderline $a=\lambda_1$ is not settled
by these strict criteria.
\end{proposition}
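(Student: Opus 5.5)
Each part reduces to an earlier criterion, using the explicit forms of $J^{-1}$ in \eqref{eq:linear-power-Jinverse} and \eqref{eq:exponential-linear-Jinverse}. In both cases there is $C_f>0$ with $J^{-1}(s)\le C_fe^{-as}$ for all large $s$. Both nonlinearities are nondecreasing, convex and satisfy \eqref{Hf}, so the blow-up criteria of Section \ref{blowup-proof} apply.

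For part (1), fix a relatively compact ball $B$ on which $u_0\ge\delta>0$. The hypothesis gives $\underline{\mathcal K}(T)\ge c_BT^{-\kappa_B}$ for $T\ge T_B$. I verify \eqref{hpBU} of Corollary \ref{corteo1}:
\[
\frac{\underline{\mathcal K}(T)}{J^{-1}(H(T))}\ge \frac{c_B}{C_f}\,T^{-\kappa_B}e^{aH(T)}.
\]
For $h(t)=(1+t)^q$ with $q>-1$, $H(T)\sim T^{q+1}/(q+1)$ grows like a positive power of $T$. For $h(t)=e^{qt}$, $H$ grows exponentially. In both cases $e^{aH(T)}$ beats any power, so the ratio diverges and Corollary \ref{corteo1}(ii) gives blow-up.

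Part (3) is identical, with \eqref{eq:spectral-exponential-lower} in place of the polynomial lower bound. The ratio is then bounded below by $c\,e^{aH(T)-(\lambda_1+\eta)T}$. If $h\equiv1$, then $H(T)=T$; choosing $\eta<a-\lambda_1$ gives divergence. If $h(t)=(1+t)^q$ with $q>0$, or $h(t)=e^{qt}$, then $H(T)/T\to\infty$, so divergence holds for every $a>0$ and every $\eta>0$.

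For part (2) I apply Theorem \ref{thm:gap-general}. Fix $r_0>0$. By \eqref{eq:linear-power-lip} and \eqref{eq:exponential-linear-lip}, $\ell(r_0):=\operatorname{Lip}(f;[0,r_0])$ decreases to $a$ as $r_0\downarrow0$. The task is to find $\nu>\ell$ with $B_\nu=\sup_{t\ge0}e^{\nu H(t)-\lambda_1 t}<\infty$.
\begin{itemize}
\item If $h\equiv1$ and $a<\lambda_1$, choose $r_0$ small so that $\ell<\lambda_1$, and take $\nu\in(\ell,\lambda_1]$. Then $\nu H(t)-\lambda_1t=(\nu-\lambda_1)t\le0$, so $B_\nu\le1$.
\item If $h(t)=(1+t)^q$ with $-1<q<0$, then $H(t)=o(t)$, so $\nu H(t)-\lambda_1t\to-\infty$ for every $\nu$. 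Since $H$ is continuous, $B_\nu<\infty$ for any $\nu>\ell$, whatever $a$ is.
\end{itemize}
Theorem \ref{thm:gap-general} then yields global existence for small data. The only delicate point is that $r_0$ must be fixed before $\nu$, $\delta$ and $M$; the theorem is designed so that $M$ is then chosen with $MA_\gamma B_\nu\le r_0$.

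I expect no serious obstacle. The step needing most care is checking the asymptotic bounds on $J^{-1}$ uniformly for large arguments, including the exact constant in the exponent $e^{-as}$. This is what makes the comparison $aH(T)$ versus $\lambda_1T$ sharp and leaves the borderline $a=\lambda_1$ undecided.
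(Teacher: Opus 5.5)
Your proposal is correct and follows essentially the same route as the paper. The asymptotics $J^{-1}(s)\sim C_fe^{-as}$ feed Corollary \ref{corteo1}(ii) in parts (1) and (3), and Theorem \ref{thm:gap-general}, with $r_0$ fixed before $\nu$, handles part (2); the only cosmetic differences are that you allow $\nu=\lambda_1$ in the autonomous case, and you phrase part (1) as $e^{aH(T)}$ dominating any power of $T$ rather than as $H(T)/\log T\to\infty$.
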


\begin{proof}
For both nonlinearities,
\[
J^{-1}(s)\sim C_f e^{-as}
\qquad\text{as }s\to\infty
\]
for some $C_f>0$.

For part (i), choose a relatively compact ball carrying
positive initial mass.
For both time coefficients in that part,
\[
\frac{H(T)}{\log T}\longrightarrow\infty.
\]
The polynomial lower heat kernel bound therefore gives
\[
\frac{\underline{\mathcal K}(T)}{J^{-1}(H(T))}
\longrightarrow\infty.
\]
Corollary \ref{corteo1} yields blow-up.

For part (ii), first suppose that $h\equiv1$ and
$a<\lambda_1$.
Choose
\[
a<\ell<\nu<\lambda_1.
\]
By \eqref{eq:linear-power-lip} or
\eqref{eq:exponential-linear-lip}, a sufficiently small
$r_0>0$ ensures \eqref{locallipschitzgap}.
Since $H(t)=t$, condition \eqref{eq:general-time-gap}
holds with $B_\nu=1$.
Theorem \ref{thm:gap-general} applies.

If $-1<q<0$, then $H(t)=o(t)$.
Choose any $\ell>a$, then $r_0>0$ sufficiently small
for \eqref{locallipschitzgap}, and finally $\nu>\ell$.
Since
\[
\nu H(t)-\lambda_1t\longrightarrow-\infty,
\]
condition \eqref{eq:general-time-gap} holds.
Again, Theorem \ref{thm:gap-general} applies.

For part (iii), the spectral lower bound gives
\eqref{hpBU} whenever, for some $\eta>0$,
\[
aH(T)-(\lambda_1+\eta)T\longrightarrow\infty.
\]
If $h\equiv1$ and $a>\lambda_1$, choose
$0<\eta<a-\lambda_1$.
For the increasing polynomial and exponential coefficients,
the conclusion follows from $H(T)/T\to\infty$.
\end{proof}

\begin{remark}
The sufficient conditions above distinguish the behavior
near zero from the growth at infinity.
For \eqref{eq:sum-powers}, the smaller exponent $p$ determines
the displayed thresholds; the larger exponent $r$ affects
the quantitative smallness condition.
For \eqref{eq:exponential-power-nonlinearity}, exponential
growth at infinity is likewise compatible with small-data
global existence because the construction uses only
local control near zero.

For \eqref{eq:linear-plus-power} and
\eqref{eq:exponential-linear-nonlinearity}, the positive
slope $a=f'(0)$ changes the comparison with the time
coefficient and the spectral decay.

All four nonlinearities satisfy \eqref{Hf}.
Thus, for every time coefficient considered here,
sufficiently large multiples of a fixed nonnegative profile
with $\|P_T\varphi\|_{L^\infty}>0$ at some $T>0$
generate blow-up by Remark \ref{rem:large-data-BU},
including in the regimes admitting small-data global solutions.
\end{remark}

\subsection{Examples of underlying spaces}

We now apply the preceding criteria to the geometric
settings considered in Section \ref{model}.
The distinction between the two nonlinearities with
superlinear behavior at zero and the two with a positive
linear part will be kept explicit.

\subsubsection{Euclidean space and Riemannian manifolds}
\begin{example}[Euclidean space and flat cylinders]
Let $X=\mathbb R^n$.
\begin{itemize}
\item For either \eqref{eq:sum-powers} or
\eqref{eq:exponential-power-nonlinearity}, with $h\equiv1$,
finite-time blow-up holds whenever
\[
1<p\le1+\frac2n,
\]
while sufficiently small initial data generate global
solutions whenever
\[
p>1+\frac2n.
\]
\item For
\[
h(t)=(1+t)^q,\qquad q>-1,
\]
the corresponding strict conditions are
\[\begin{aligned}
&1<p<1+\frac{2(q+1)}n
\quad\text{for blow-up},\\
&p>1+\frac{2(q+1)}n
\quad\text{for small-data global existence}.\end{aligned}
\]
The critical equality is included in the blow-up range
when $q=0$; the preceding criteria do not settle it
for $q\ne0$.
\item For $h(t)=e^{qt}$, $q>0$, blow-up holds for every $p>1$.
\end{itemize}
For either \eqref{eq:linear-plus-power} or
\eqref{eq:exponential-linear-nonlinearity}, blow-up holds
for every admissible nontrivial initial datum for all
the polynomial and exponential coefficients just listed.
Indeed, the Euclidean kernel satisfies the local polynomial
lower bound required by
Proposition \ref{prop:linear-part-time}.

On $\mathbb R^d\times\mathbb T^m$, with $d\ge1$ and $m\ge1$, the same conclusions hold
with $d$ in place of $n$.
For the nonlinearities with superlinear behavior at zero,
the threshold is therefore determined by the large-scale
dimension $d$, independently of the compact factor.
\end{example}

\begin{example}[Nonnegative Ricci curvature]
Let $X$ be a complete connected $n$-dimensional Riemannian
manifold with nonnegative Ricci curvature.

For either \eqref{eq:sum-powers} or
\eqref{eq:exponential-power-nonlinearity}, one has
\begin{itemize}
\item for \[
h(t)=(1+t)^q,\qquad q>-1,
\]
that the Gaussian estimates and the volume upper bound $V(x_0,R)\le CR^n$ give blow-up whenever
\[
1<p<1+\frac{2(q+1)}n.
\]
If in particular $q=0$, conservativity also gives blow-up at equality.
If, in addition,
\[
V(x,R)\ge cR^d
\qquad\text{uniformly in }x,\quad R\ge1,
\]
then the uniform upper heat kernel bounds discussed in
Section \ref{model} yield global existence for sufficiently
small data whenever
\[
p>1+\frac{2(q+1)}d.
\]
If the large-scale volume growth is uniformly comparable
to $R^d$, the blow-up threshold also involves $d$, so the
two strict criteria have the common threshold
\[
1+\frac{2(q+1)}d.
\]
Again, equality belongs to the blow-up range when $q=0$.
\item For $h(t)=e^{qt}$, $q>0$, both nonlinearities yield blow-up
for every $p>1$, without an additional volume lower bound.
\end{itemize}
For either \eqref{eq:linear-plus-power} or
\eqref{eq:exponential-linear-nonlinearity}, all the above
polynomial and exponential coefficients yield blow-up.
Indeed, the Gaussian lower bound and
$V(x,R)\le CR^n$ give a polynomial lower kernel bound
on every fixed relatively compact ball, so
Proposition \ref{prop:linear-part-time} applies.
\end{example}

\begin{example}[Hyperbolic space]
Let $X=\mathbb H^n$, $n\ge2$, with sectional curvature $-1$.
Its heat kernel is uniformly bounded at every positive time,
and
\[
\lambda_1=\frac{(n-1)^2}{4}>0.
\]
The locally uniform large-time heat kernel asymptotics
recalled in Section \ref{mf} also give
\eqref{eq:spectral-exponential-lower}; see
\cite{ChavelKarp}.
For either \eqref{eq:sum-powers} or
\eqref{eq:exponential-power-nonlinearity},
sufficiently small initial data generate global solutions
for every $p>1$ whenever $h$ has at most polynomial growth.
In particular, this holds for every
\[
h(t)=(1+t)^q,\qquad q>-1.
\]
For $h(t)=e^{qt}$, $q>0$, the spectral criteria give
\[
q<(p-1)\frac{(n-1)^2}{4}
\quad\text{for small-data global existence},
\]
and
\[
q>(p-1)\frac{(n-1)^2}{4}
\quad\text{for blow-up of every admissible nontrivial datum}.
\]

For \eqref{eq:linear-plus-power}, with $h\equiv1$,
sufficiently small data generate global solutions if
\[
a_0<\frac{(n-1)^2}{4},
\]
while every admissible nontrivial datum generates blow-up if
\[
a_0>\frac{(n-1)^2}{4}.
\]
For \eqref{eq:exponential-linear-nonlinearity}, the corresponding
conditions are
\[
cb<\frac{(n-1)^2}{4}
\quad\text{and}\quad
cb>\frac{(n-1)^2}{4},
\]
respectively.

For both nonlinearities with a positive linear part,
the decreasing coefficients
\[
h(t)=(1+t)^q,\qquad -1<q<0,
\]
give global existence for sufficiently small data,
without a restriction on $a_0$ or $cb$ relative
to $\lambda_1$.
In contrast, the increasing coefficients
\[
h(t)=(1+t)^q,\quad q>0,
\qquad\text{or}\qquad
h(t)=e^{qt},\quad q>0,
\]
give blow-up for every admissible nontrivial initial datum.
The equality cases are subject to the qualifications
in Corollary \ref{cor:beyond-exponential-time} and
Proposition \ref{prop:linear-part-time}.
\end{example}

\begin{example}[Negatively curved manifolds and other spectral settings]
Let $X$ be a complete simply connected Riemannian manifold
of dimension $n\ge2$ such that
\[
-b_1^2\le\operatorname{Sec}_X\le-a_1^2<0
\]
for some $0<a_1\le b_1$.
As in Section \ref{model}, its heat kernel has uniform
bounds at positive times and
\[
\lambda_1(X)\ge\frac{(n-1)^2a_1^2}{4}>0.
\]
By \cite[Theorem~3.1(b) and Section~5.1]{KLVW2015},
\[
\lim_{t\to\infty}\frac{\log K_t(x,y)}{t}
=-\lambda_1(X)
\qquad\text{for every }x,y\in X.
\]
Note that, for every relatively compact ball
$B\subset X$ and every $\eta>0$,
\[
K_t(x,y)\ge c_{B,\eta}e^{-(\lambda_1(X)+\eta)t}
\qquad
(x,y\in B,\ t\ge T_{B,\eta}),
\]
for suitable constants $c_{B,\eta},T_{B,\eta}>0$ (see \cite{GrilloMeglioliPunzo2023}).
Thus \eqref{eq:spectral-exponential-lower} holds.
All the conclusions of the hyperbolic example therefore
hold with $\lambda_1(X)$ in place of $(n-1)^2/4$.
The lower bound on $\lambda_1(X)$ also gives explicit
sufficient conditions for existence:
\[
q<(p-1)\frac{(n-1)^2a_1^2}{4}
\]
for \eqref{eq:sum-powers} and
\eqref{eq:exponential-power-nonlinearity} with $h(t)=e^{qt}$,
and
\[
a_0<\frac{(n-1)^2a_1^2}{4},
\qquad
cb<\frac{(n-1)^2a_1^2}{4},
\]
for the respective nonlinearities with a linear part
and $h\equiv1$.
The blow-up comparisons involve the actual value
$\lambda_1(X)$; a lower bound for it alone does not
give the corresponding sufficient blow-up inequalities.

More generally, the small-data existence conclusions apply
on spaces satisfying the standing kernel assumptions,
with positive spectral bottom and a uniform kernel bound
at a positive time. Similar results can be found in \cite{GrilloMeglioliPunzo2023}.
\end{example}

\subsubsection{Metric and discrete graphs}

We retain the graph realizations, measures, and Laplacians
specified in Examples \ref{ex:metric-lattice},
\ref{ex:discrete-lattice}, and \ref{ex:regular-tree}.
For discrete graphs, we use the discrete semigroup framework
described in Section \ref{model}, rather than imposing
the all-time estimate $(HK_\beta)$.
Small-data statements retain the heat kernel majorization
specified at the beginning of this section.

\begin{example}[Metric and discrete lattices]
Let $X$ be either the metric lattice of
Example \ref{ex:metric-lattice} or the discrete lattice
of Example \ref{ex:discrete-lattice}, with $d\ge1$.

First, let $f$ be given by \eqref{eq:sum-powers} or
\eqref{eq:exponential-power-nonlinearity}.
For
\[
h(t)=(1+t)^q,\qquad q>-1,
\]
every admissible nonnegative, nontrivial initial datum
generates finite-time blow-up if
\[
1<p<1+\frac{2(q+1)}d,
\]
whereas sufficiently small initial data generate global
solutions if
\[
p>1+\frac{2(q+1)}d.
\]
When $q=0$, blow-up also holds at equality.
For $q\ne0$, the preceding criteria do not settle
the equality case.
If
\[
h(t)=e^{qt},\qquad q>0,
\]
finite-time blow-up holds for every $p>1$.

For the metric lattice, these statements follow from
the heat kernel estimates in Example \ref{ex:metric-lattice}
and the power-type criteria of this section.
For the discrete lattice, the subcritical and
exponential-time blow-up statements use the lower bound
on a fixed finite set, while global existence follows from
\eqref{eq:discrete-lattice-upper} and
Proposition \ref{rem:localpower}.
At the autonomous critical exponent, the discrete mass
argument in Example \ref{ex:discrete-lattice} remains valid:
the inequality $f(v)\ge v^p$ gives the same restarted
bound and the same logarithmic contradiction.

Next, let $f$ be given by \eqref{eq:linear-plus-power} or
\eqref{eq:exponential-linear-nonlinearity}.
Then every admissible nonnegative, nontrivial initial datum
generates finite-time blow-up for each of the coefficients
\[
h(t)=(1+t)^q,\quad q>-1,
\qquad\text{and}\qquad
h(t)=e^{qt},\quad q>0.
\]
Indeed, both lattices satisfy a polynomial lower heat
kernel bound on each fixed relatively compact ball
(a finite ball in the discrete case).
The argument of Proposition \ref{prop:linear-part-time}(i)
therefore applies.

For the nonlinearities with power behavior at zero,
the displayed thresholds depend on the large-scale
dimension $d$.
In particular, the local one-dimensional structure
of the metric lattice does not change these thresholds.
\end{example}

\begin{example}[Regular discrete trees]
Let $X=\mathbb T_k$, $k\ge3$, with counting measure
and the unnormalized Laplacian of
Example \ref{ex:regular-tree}. Set
\[
\lambda_1=k-2\sqrt{k-1}>0.
\]
The uniform kernel bound and the lower estimate
on fixed finite balls established in that example
allow us to apply the spectral criteria of this section.

Let $f$ be given by \eqref{eq:sum-powers} or
\eqref{eq:exponential-power-nonlinearity}.
If $h$ has at most polynomial growth, sufficiently small
initial data generate global solutions for every $p>1$.
In particular, this holds for
\[
h(t)=(1+t)^q,\qquad q>-1.
\]
For $h(t)=e^{qt}$, $q>0$, sufficiently small data
generate global solutions when
\[
q<(p-1)\lambda_1,
\]
whereas every admissible nonnegative, nontrivial datum
generates finite-time blow-up when
\[
q>(p-1)\lambda_1.
\]
These strict conditions do not settle equality.

Now let $f$ be given by \eqref{eq:linear-plus-power} or
\eqref{eq:exponential-linear-nonlinearity}, and write
\[
a=f'(0)=a_0
\quad\text{or}\quad
a=f'(0)=cb,
\]
respectively.
For $h\equiv1$, sufficiently small data generate global
solutions if $a<\lambda_1$, whereas every admissible
nonnegative, nontrivial datum generates finite-time
blow-up if $a>\lambda_1$.
The autonomous equality case is not settled
by these strict criteria.

For either nonlinearity with a positive linear part,
the coefficient
\[
h(t)=(1+t)^q,\qquad -1<q<0,
\]
gives global existence for sufficiently small data,
without a restriction on $a$ relative to $\lambda_1$.
For
\[
h(t)=(1+t)^q,\quad q>0,
\qquad\text{or}\qquad
h(t)=e^{qt},\quad q>0,
\]
every admissible nonnegative, nontrivial datum
generates finite-time blow-up.

Finally, for all four nonlinearities, sufficiently
large multiples of a fixed admissible nonnegative,
nontrivial profile generate finite-time blow-up,
including in the regimes admitting small-data
global solutions.
\end{example}

\begin{remark}
The autonomous small-data criterion for a positive linear
part is not restricted to the explicit families considered
here.
If $f$ satisfies the standing assumptions and is convex,
then
\[
f'_+(0)<\lambda_1
\]
ensures \eqref{locallipschitzgap} with some $\ell<\lambda_1$
and sufficiently small $r_0>0$.
Indeed, nonnegativity and $f(0)=0$ imply that $f$ is
nondecreasing, and
\[
\operatorname{Lip}(f;[0,r_0])=f'_-(r_0),
\qquad
\lim_{r_0\downarrow0}f'_-(r_0)=f'_+(0).
\]
Theorem \ref{thm:gap-general} then applies with $h\equiv1$
under its remaining kernel and initial-data assumptions.
\end{remark}

\bigskip
\bigskip

\appendix
\section{Additional background on heat kernel estimates}
\label{app:kernel-background}
\setcounter{equation}{0}

This appendix collects supplementary results relating
heat kernel profiles to volume growth and locality
of the Dirichlet form.
We retain the metric measure and Dirichlet form
framework introduced in Section \ref{mf}.

\subsection{Power prefactors and volume growth}

We next introduce estimates whose prefactor is a power of time, rather than
the reciprocal of the volume of a ball.

\begin{definition}\label{defHtheta}
Fix $\alpha,\beta>0$. A heat kernel satisfies $\mathcal H_\theta$, where $\theta>0$,
if there exist finite-valued, nonnegative, nonincreasing functions
$\Phi_1,\Phi_2:[0,\infty)\to[0,\infty)$ such that $\Phi_1(1)>0$,
\begin{equation}\label{Htheta}
\frac{1}{t^{\alpha/\beta}}
\Phi_1\left(\frac{d(x,y)}{t^{1/\beta}}\right)
\le K_t(x,y)\le
\frac{1}{t^{\alpha/\beta}}
\Phi_2\left(\frac{d(x,y)}{t^{1/\beta}}\right)
\end{equation}
for every $t>0$ and almost every $(x,y)$, and
\begin{equation}\label{eqint}
\int_1^\infty s^{\theta-1}\Phi_2(s)\,ds<\infty.
\end{equation}
\end{definition}

For fixed $\alpha,\beta$, the condition $\mathcal H_\theta$ becomes stronger
as $\theta$ increases. For example,
\[
\Phi_2(s)=e^{-Cs^\gamma},
\qquad C,\gamma>0,
\]
satisfies \eqref{eqint} for every $\theta>0$, whereas
\[
\Phi_2(s)=\frac{C}{(1+s)^\gamma},
\qquad C,\gamma>0,
\]
satisfies \eqref{eqint} precisely when $0<\theta<\gamma$.

The Gauss--Weierstrass heat kernel on $\mathbb R^n$,
\[
p_t(x,y)=
\frac{1}{(4\pi t)^{n/2}}\exp\left(-\frac{|x-y|^2}{4t}\right),
\]
satisfies $\mathcal H_\theta$ for every $\theta>0$, with $\alpha=n$, $\beta=2$, and
\[
\Phi_1(s)=\Phi_2(s)=
\frac{1}{(4\pi)^{n/2}}\exp\left(-\frac{s^2}{4}\right).
\]

Since $\Phi_2$ is nonincreasing and finite-valued, we have
\[
\int_0^1 s^{\alpha-1}\Phi_2(s)\,ds
\le \frac{\Phi_2(0)}{\alpha}<\infty.
\]
Thus, when $\theta=\alpha$, condition \eqref{eqint} is equivalent to
\[
\int_0^\infty s^{\alpha-1}\Phi_2(s)\,ds<\infty.
\]

\begin{theorem}[see \cite{GrigHK,GrigHuLau}]
Let $K_t$ be a conservative heat kernel on $(X,d,\mu)$.
Assume that $\mathcal H_\alpha$ holds. Then there exists a constant $c\ge1$ such that
\begin{equation}\label{eq:polbound}
c^{-1}r^\alpha\le V(x,r)\le cr^\alpha
\end{equation}
for every $x\in X$ and every $r>0$.
\end{theorem}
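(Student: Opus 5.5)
The plan is to obtain the two bounds in \eqref{eq:polbound} separately, using the lower estimate in \eqref{Htheta} together with sub-Markovianity for the upper volume bound, and the upper estimate with conservativity and \eqref{eqint} (at $\theta=\alpha$) for the lower volume bound.

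\emph{Upper bound on $V(x,r)$.} Fix $x$ and $r>0$ and take $t=r^\beta$. For $y\in B_r(x)$ we have $d(x,y)/t^{1/\beta}<1$. Since $\Phi_1$ is nonincreasing, the lower bound in \eqref{Htheta} gives $K_t(x,y)\ge r^{-\alpha}\Phi_1(1)$. As in the proof of Theorem \ref{thm:equivalence}, the estimate holds only almost everywhere, so I will first choose, by Fubini, a good point $z$ close to $x$. Working with $B_{r/2}(z)\subseteq B_r(x)$, or simply with $x$ itself if it is a good point, and integrating over the ball, the mass inequality $\int K_t(z,y)\,d\mu(y)\le1$ yields
\[
V(z,r)\le \Phi_1(1)^{-1}r^\alpha .
\]
The constant does not depend on $z$. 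Full support lets me approximate any center by good points $z_k\to x$, and then $V(x,r)\le V(z_k,r+d(x,z_k))$. Letting $k\to\infty$ gives the upper bound for every $x$, with an arbitrarily small loss in the radius that is absorbed by continuity in $r$ on the right-hand side.

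\emph{Lower bound on $V(x,r)$.} Here I use conservativity. For a good $x$ and $t=r^\beta$,
\[
1=\int_X K_t(x,y)\,d\mu(y)\le t^{-\alpha/\beta}\int_X\Phi_2\bigl(d(x,y)/r\bigr)\,d\mu(y).
\]
I will split $X$ into $B_{\lambda r}(x)$ and the dyadic annuli $A_k:=B_{2^{k+1}\lambda r}(x)\setminus B_{2^k\lambda r}(x)$, where $\lambda\ge1$ is a large parameter. The ball contributes at most $\Phi_2(0)V(x,\lambda r)$. By the already established upper bound, $\mu(A_k)\le c(2^{k+1}\lambda r)^\alpha$, so the annuli contribute at most
\[
C r^\alpha\sum_k (2^k\lambda)^\alpha\Phi_2(2^k\lambda).
\]
By monotonicity of $\Phi_2$ this sum is comparable to $\int_{\lambda/2}^\infty s^{\alpha-1}\Phi_2(s)\,ds$, which is finite by \eqref{eqint} with $\theta=\alpha$ and tends to $0$ as $\lambda\to\infty$. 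I fix $\lambda$ so that this tail, multiplied by the constant, is at most $1/2$. This leaves
\[
r^\alpha\le 2\Phi_2(0)V(x,\lambda r)
\]
for every $r$, and replacing $r$ by $r/\lambda$ gives $V(x,r)\ge c\,r^\alpha$. The passage from good points to all points is handled as before, using $V(x,r)\ge V(z,r-d(x,z))$.

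I expect the main obstacle to be the tail estimate in the second step. One has to arrange for the upper volume bound to feed into it so that the dyadic sum is truly controlled by \eqref{eqint}, and one has to handle the fact that $\Phi_2(0)$ could be large relative to the tail, which is why the sum must be made small by choosing $\lambda$ rather than bounded directly. The almost-everywhere issues are routine by comparison.
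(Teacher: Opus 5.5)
Your proposal is correct and follows essentially the standard argument of Grigor'yan--Hu--Lau, to which the paper defers without giving a proof of its own. The upper volume bound comes from the lower kernel bound with $\Phi_1(1)>0$ and sub-Markovianity; the lower volume bound comes from conservativity, the upper kernel bound, a dyadic annular decomposition controlled by the upper volume bound, and the tail estimate $\int_{\lambda/2}^\infty s^{\alpha-1}\Phi_2(s)\,ds\to0$. Your handling of the almost-everywhere issue is also sound: for each fixed radius the good centers form a full-measure, hence dense, set, so the small change of radius in $V(x,r)\le V(z,r+d(x,z))$ and $V(x,r)\ge V(z,r-d(x,z))$ disappears in the limit.
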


\begin{remark}
The estimate \eqref{eq:polbound} means that $(X,d,\mu)$ is
Ahlfors $\alpha$-regular at all scales.

The exponent $\alpha$ in \eqref{eq:polbound} is unique and is determined by
the metric measure space. In particular, under conservativity and $\mathcal H_\alpha$,
it is an invariant of $(X,d,\mu)$.

Since \eqref{eq:polbound} holds at every scale and $\alpha>0$, it implies that $\mu$
has no atoms and that $\mu(X)=\infty$. Thus, these hypotheses exclude compact spaces
and discrete spaces with positive point masses.
\end{remark}

\begin{remark}
If $V(x,r)\asymp r^\alpha$ uniformly in $x\in X$ and $r>0$,
then $(HK_\beta)$ yields two-sided heat kernel bounds with
prefactor $t^{-\alpha/\beta}$ and sub-Gaussian profiles.
In particular, $\mathcal H_\theta$ holds for every $\theta>0$.
\end{remark}

\subsection{The chain condition and locality}

\begin{definition}\label{chaincondition}
A metric space $(X,d)$ satisfies the \emph{chain condition} if there exists $C\ge1$
such that, for every $x,y\in X$ and every integer $m\ge1$, there are points
$x_0,\ldots,x_m\in X$ satisfying
\[
x_0=x,\qquad x_m=y,
\]
and
\[
d(x_i,x_{i+1})\le \frac{C}{m}d(x,y),
\qquad i=0,\ldots,m-1.
\]
\end{definition}

\begin{remark}
Under Definition \ref{MMS}, the chain condition is automatic, with $C=1$,
by subdividing a geodesic into $m$ segments of equal length.
Moreover, all metric balls are relatively compact.

More generally, the chain condition holds on quasiconvex metric spaces, meaning spaces
in which every pair of points can be joined by a rectifiable curve whose length
is bounded by a fixed multiple of their distance.
\end{remark}

\begin{theorem}[see \cite{GrigKum}]\label{teolocalandnonlocal}
Let $(X,d,\mu)$ be as in Definition \ref{MMS}. Let $K_t$ be a conservative heat kernel
whose associated Dirichlet form $(\mathcal E,\mathcal F)$ is regular.

Assume that there exist $\alpha,\beta>0$, positive constants $a_1,a_2,a_3,a_4$,
and a finite-valued, nonnegative, nonincreasing function
$\Phi:[0,\infty)\to[0,\infty)$, with $\Phi(s_0)>0$ for some $s_0>0$, such that
\[
\frac{a_1}{t^{\alpha/\beta}}
\Phi\left(a_2\frac{d(x,y)}{t^{1/\beta}}\right)
\le K_t(x,y)\le
\frac{a_3}{t^{\alpha/\beta}}
\Phi\left(a_4\frac{d(x,y)}{t^{1/\beta}}\right)
\]
for every $t>0$ and almost every $(x,y)$.

Then
\[
\beta\le\alpha+1,
\qquad
V(x,r)\asymp r^\alpha
\quad \text{uniformly in }x\in X,\ r>0.
\]
Moreover, exactly one of the following alternatives holds:
\begin{itemize}
\item the form $\mathcal E$ is local, $\beta\ge2$, and there exist $b_1,b_2,b_3,b_4>0$
such that
\[
b_1\exp\left(-b_2s^{\beta/(\beta-1)}\right)
\le\Phi(s)\le
b_3\exp\left(-b_4s^{\beta/(\beta-1)}\right)
\qquad \text{for every }s\ge0;
\]

\item the form $\mathcal E$ is non-local and
\[
\Phi(s)\asymp(1+s)^{-(\alpha+\beta)}
\qquad \text{for every }s\ge0.
\]
\end{itemize}
\end{theorem}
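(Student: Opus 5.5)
The plan is to follow the strategy of Grigor'yan--Kumagai \cite{GrigKum}, in three stages: first the volume asymptotics, then the inequality $\beta\le\alpha+1$, and finally the local/non-local dichotomy.

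\textbf{Volume.} Since the two-sided bound has the form of \eqref{Htheta} with $\Phi_1=a_1\Phi(a_2\cdot)$ and $\Phi_2=a_3\Phi(a_4\cdot)$, I would derive $V(x,r)\asymp r^\alpha$ before anything else.

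The lower bound $V(x,r)\ge cr^\alpha$ follows from conservativity and the upper bound. Take $t=r^\beta$ and split $1=\int_X K_t(x,y)\,d\mu(y)$ into the integral over $B_r(x)$ and the integral over its complement. The complement is handled by a dyadic-annuli argument, using the upper kernel bound together with a provisional polynomial volume bound. To break the circularity, I would first obtain a crude upper bound on the volume.

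The upper bound $V(x,r)\le Cr^\alpha$ comes from the lower kernel bound. With $t=(r/s_0)^\beta$ one has $\Phi(a_2 d/t^{1/\beta})\ge\Phi(s_0)>0$ for $d(x,y)<s_0t^{1/\beta}/a_2$. Hence
\[
1\ge\int_{B_r(x)}K_t\,d\mu\ge c\,t^{-\alpha/\beta}V(x,r),
\]
which gives the upper bound.

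Once the upper bound is in hand, the lower bound follows from the tail argument above. This tail step needs integrability of $s^{\alpha-1}\Phi(s)$. I expect to obtain that integrability by iterating, using the upper volume bound and the fact that $\int K_t\le1$ forces $\int_0^\infty s^{\alpha-1}\Phi(a_2 s)\,ds<\infty$.

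\textbf{The inequality $\beta\le\alpha+1$ and the dichotomy.} Here I would use the Beurling--Deny decomposition of the regular form into a strongly local part and a jump part. The jump kernel is identified as
\[
J(x,y)=\lim_{t\to0}\frac{K_t(x,y)}{t}\qquad\text{for }x\ne y,
\]
so the kernel bounds give
\[
J(x,y)\asymp\lim_{t\to0} t^{-1-\alpha/\beta}\,\Phi\bigl(c\,d(x,y)t^{-1/\beta}\bigr).
\]
I then split into two cases according to this limit. If it is zero for all $x\ne y$, the jump part vanishes and the form is local; the chain condition, which is automatic under Definition \ref{MMS}, then lets one run the chaining argument that forces the sub-Gaussian profile with exponent $\beta/(\beta-1)$ and $\beta\ge2$. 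If it is nonzero, scaling shows that it must be homogeneous of degree $-(\alpha+\beta)$ in $d$, and a comparison argument then gives $\Phi(s)\asymp(1+s)^{-(\alpha+\beta)}$.

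The inequality $\beta\le\alpha+1$ in the local case comes from comparing the subdivided chain estimate with the volume growth. I expect this dichotomy step, in particular excluding intermediate profiles, to be the main obstacle. Since the statement is quoted from \cite{GrigKum}, I would ultimately defer to that argument for this step.
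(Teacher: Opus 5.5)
The paper gives no proof of this statement; it is quoted from \cite{GrigKum}, so deferring the dichotomy to that reference matches what the paper does. The steps you do sketch, however, contain a genuine gap, already in the volume stage.

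\textbf{Volume lower bound.} From $\int_XK_t(x,y)\,d\mu(y)\le1$ and the lower kernel bound you get $\int_X\Phi\bigl(a_2d(x,y)t^{-1/\beta}\bigr)\,d\mu(y)\le t^{\alpha/\beta}/a_1$. Turning this into $\int_1^\infty s^{\alpha-1}\Phi(s)\,ds<\infty$ requires a \emph{lower} bound on the measure of the annuli $B_{2^{k+1}\rho}(x)\setminus B_{2^k\rho}(x)$. That is exactly the lower volume bound you are trying to prove. The upper bound $V(x,r)\le Cr^\alpha$ points the wrong way, so the circularity you noticed is not broken.

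The missing ingredient is the Dirichlet form itself. For $u\in\mathcal F$ and every $t>0$, the spectral theorem and conservativity give
\[
\mathcal E(u,u)\ge\frac1t(u-P_tu,u)_{L^2(X,\mu)}=\frac1{2t}\int_X\int_X(u(x)-u(y))^2K_t(x,y)\,d\mu(x)\,d\mu(y).
\]
Restrict to $\{d(x,y)\le R\}$, use the lower kernel bound and the monotonicity of $\Phi$, and set $s=a_2Rt^{-1/\beta}$. You obtain $\sup_{s>0}s^{\alpha+\beta}\Phi(s)<\infty$, provided some $u\in\mathcal F$ has $\iint_{\{d(x,y)\le R\}}(u(x)-u(y))^2\,d\mu\,d\mu>0$. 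Such a $u$ exists: a nonzero $u\in\mathcal F$ for which these integrals vanish for every $R$ would be a.e.\ constant, which is impossible in $L^2$ because $\mu(X)=\infty$. The latter follows from $1=\int_XK_t(x,y)\,d\mu(y)\le a_3t^{-\alpha/\beta}\Phi(0)\mu(X)$ as $t\to\infty$.

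Hence $\Phi(s)\le C(1+s)^{-(\alpha+\beta)}$. Since $\beta>0$, this supplies the integrability your tail argument needs, and it is also the upper half of the non-local alternative.

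\textbf{The inequality $\beta\le\alpha+1$.} The same inequality, restricted to $\{d(x,y)<(s_0/a_2)t^{1/\beta}\}$, gives
\[
\mathcal E(u,u)\ge c\sup_{r>0}r^{-(\alpha+\beta)}\iint_{\{d(x,y)<r\}}(u(x)-u(y))^2\,d\mu\,d\mu .
\]
A chain-subdivision argument shows that finiteness of this seminorm forces $u$ to be constant when $\beta>\alpha+1$, contradicting density of $\mathcal F$. This gives $\beta\le\alpha+1$ in both alternatives, whereas you only address it in the local case.

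\textbf{The dichotomy.} The pointwise limit $\lim_{t\to0}K_t(x,y)/t$ need not exist, since the Beurling--Deny jump part is determined only as a limit of measures. Moreover, ``zero or nonzero'' is not exhaustive, because $s^{\alpha+\beta}\Phi(s)$ may a priori oscillate; this is exactly where intermediate profiles would hide. The workable split is according to $\liminf_{s\to\infty}s^{\alpha+\beta}\Phi(s)$.

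If it vanishes, take $u,v\in\mathcal F$ with compact supports at distance $r>0$. These functions are integrable and $(u,v)_{L^2}=0$, so
\[
\mathcal E(u,v)=-\lim_{t\to0}\frac1t\int_X\int_Xu(x)v(y)K_t(x,y)\,d\mu(x)\,d\mu(y).
\]
Evaluating along $t_k=(a_4r/s_k)^\beta$, with $s_k^{\alpha+\beta}\Phi(s_k)\to0$, the upper kernel bound gives $\mathcal E(u,v)=0$, so the form is local.

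If the $\liminf$ is positive, the a priori bound above yields $\Phi\asymp(1+s)^{-(\alpha+\beta)}$. Then $\mathcal E(u,v)<0$ for suitable nonnegative $u,v$ with disjoint compact supports, so the form is non-local.

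\textbf{The local case.} Your chaining argument only produces the lower bound $\Phi(s)\ge c\exp(-Cs^{\beta/(\beta-1)})$, and it presupposes $\beta>1$. The matching upper bound and $\beta\ge2$ require a separate off-diagonal upper estimate specific to local forms (for instance via exit-time estimates). Your sketch does not provide this.
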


\begin{remark}
The theorem requires a single profile $\Phi$, up to multiplicative constants
and rescaling of its argument. The bounds in \eqref{Htheta} with two unrelated
profiles $\Phi_1,\Phi_2$ do not by themselves provide this hypothesis.

The chain condition and the relative compactness of balls required in \cite{GrigKum}
are ensured here by Definition \ref{MMS}.
\end{remark}

\noindent{\bf Acknowledgement}

\noindent{\bf Conflict of interest}. The author states no conflict of interest.

\noindent{\bf Data availability statement}. There are no data associated with this research.

\end{document}